\def\R{\Bbb R}
\def\si{\sigma}
\def\cal{\mathcal}
\def\co{{\cal O}}
\def\Q{\Bbb Q}
\def\part{\partial}
\def\we{\wedge}
\def\e{\epsilon}
\def\dis{\displaystyle}
\def\P{\mathbb P}
\def\C{\mathbb C}
\def\s{{\square^n}}
\def\sd{\operatorname{sd}}
\def\id{\operatorname{id}}
\def\codim{{{\rm codim}\,}}
\def\ov{\overline}
\def\Max{{\rm Max}}
\def\p1{\prec}
\def\sing{{\operatorname{sing}}}
\def\<{\langle}
\def\>{\rangle}
\def\sd{\operatorname{sd}}
\def\id{\operatorname{id}}
\newcommand{\sign}{\operatorname{sign}}
\newcommand{\Alt}{\operatorname{Alt}}
\def\res{\operatorname{Res}}
\newtheorem{theorem}{Theorem}[section] 
\newtheorem{proposition}[theorem]{Proposition} 
\newtheorem{definition}[theorem]{Definition} 
\newtheorem{remark}[theorem]{Remark} 
\newtheorem{lemma}[theorem]{Lemma} 
\newtheorem{lemma-definition}[theorem]{Lemma-Definition} 
\newtheorem{proposition-definition}[theorem]{Proposition-Definition}
\newtheorem{notation}[theorem]{Notation}
\newcommand{\CC}{{\mathbb{C}}}
\newcommand{\PP}{{\mathbb{P}}}
\newcommand{\QQ}{{\mathbb{Q}}}
\newcommand{\RR}{{\mathbb{R}}}
\newcommand{\ZZ}{{\mathbb{Z}}}
\newcommand\Spec{\operatorname{Spec}}
\newcommand\sgn{\operatorname{sgn}}
\newcommand{\al}{\alpha}  \newcommand{\ga}{\gamma} 
 \newcommand{\eps}{\epsilon} \newcommand{\la}{\lambda}
\newcommand{\Ga}{\Gamma}
\newcommand{\BP}{{\mathbb{P}}}
\newcommand{\sq}{\square} 
\newcommand{\ot}{\otimes}
\newcommand{\vphi}{{\varphi}}
\newcommand{\injto}{\hookrightarrow}
\newcommand{\mapright}[1]{%
  \smash{\mathop{%
    \hbox to 1cm{\rightarrowfill}}\limits^{#1} } } 
\newcommand{\smapr}[1]{%
  \smash{\mathop{%
    \hbox to 0.5cm{\rightarrowfill}}\limits^{#1} } } 
\newcommand{\maprb}[1]{%
  \smash{\mathop{%
    \hbox to 1cm{\rightarrowfill}}\limits_{#1} } } 
\newcommand{\mapleft}[1]{%
  \smash{\mathop{%
    \hbox to 1cm{\leftarrowfill}}\limits^{#1} } }
\newcommand{\maplb}[1]{%
  \smash{\mathop{%
    \hbox to 1cm{\leftarrowfill}}\limits_{#1} } }
\def\alt{\operatorname{alt}}
\begin{document}

\title{
Integrals of logarithmic forms on semi-algebraic sets and a 
generalized Cauchy formula \\
Part II: Generalized Cauchy formula 
} 
\author{Masaki Hanamura, Kenichiro Kimura and Tomohide Terasoma}

\maketitle
\begin{abstract}
This paper is the continuation of the paper arXiv:1509.06950, which is Part I under the same title. In this paper, we prove a generalized Cauchy formula for the integrals of logarithmic forms on products of projective lines, and give an application to the construction of Hodge realization of mixed Tate motives. 
\end{abstract}

\vskip 0.3cm

\noindent{\it Keywords}:  Semi-algebraic sets, Generalized Cauchy formula, Mixed Tate motives, Hodge realization.

\footnote{2020 Mathematics subject classification. Primary 14P10,  14C15, 14C30. Secondary 14C25 }
\setcounter{tocdepth}{3}

\markboth
{Integrals of logarithmic forms}
{Hanamura, Kimura, Terasoma}

\thispagestyle{empty}
\setcounter{tocdepth}{1}
\tableofcontents

\section{Introduction}

This paper is the continuation 
of the paper \cite{part I}, which is  Part I under the same title.
In this paper, we prove a generalized Cauchy formula for the integrals
of logarithmic forms on products $P^n=(\PP^1_\CC)^n$ of projective lines
$\PP^1_\CC$.
As an application, we define a variant of the Hodge realization functor for
the category of mixed Tate motives
defined by Bloch and Kriz \cite{BK}.
In the sequel to this paper, we prove that our construction
coincides with the original one defined by Bloch and Kriz.
The motivation of our series of papers is to 
understand  the Hodge realization 
functor via integral of logarithmic differential forms.

Before going into the detail,
we describe a simplest example of
the generalized Cauchy formula. 
Let $\omega_2=(2\pi i)^{-2}\dfrac{dz_1}{z_1}\wedge \dfrac{dz_2}{z_2}$,
$\omega_1=(2\pi i)^{-1}\dfrac{dz_2}{z_2}$
be a holomorphic two resp. one form on 
$(\CC-\{0\})^2$ resp. 
$\CC-\{0\}$. Let $0<a<b$ be
real numbers, and let 
$$\sigma=\{(z_1,z_2)\,|\,z_1=x_1+y_1i, z_2=x_2+x_1i,\,\,a\leq x_2\leq b,\quad x_1^2+y_1^2\leq 1\}$$ be a $3$-chain
in $\CC^2$.
Its 
topological boundary is denoted by $\delta\sigma$.
Then we have the following identity, called the generalized Cauchy formula:
\begin{align}
\label{easiest example} 
\int_{\delta \sigma}\omega_2
=&
\int_{\{x_1^2+y_1^2=1\}}\omega_2
+\int_{(\{x_2=b\})-( \{x_2=a\})
}\omega_2
\\
\nonumber
=&\int_{[a,b]}
\omega_1
=\int_{\sigma\cap (\{0\}\times \CC)}
\omega_1.
\end{align}
Although the differential form $\omega_2$ is
not defined on $\delta\sigma\cap (\{0\}\times\CC)$,
the integral is defined as an improper integral.
The important property of the chain $\si$ is that both $\si$ and $\delta\si$ meet
the pole of the form $\omega_2$ properly i.e. in correct dimensions.  

In this paper, we generalize this formula as follows. 
Let $P^n=(\PP^1_\CC)^n$ with the coordinates $(z_1, \dots, z_n)$ , and  
$\bold D^n$ be the divisor of $P^n$    
defined by $\prod_i^n(z_i-1)=0$. The {\it faces} of $P^n$ are
irreducible components of the polar divisors of the
differential form
$$
\omega_n=\frac1{(2\pi i)^n}\frac{dz_1}{z_1}\we \cdots  \we \frac{dz_n}{z_n}.
$$ 
and their intersections. More explicitly, They are intersections of $\{z_i=0\}$ and $\{z_i=
\infty\}$.  In \S \ref{sec:admissible chain complex} we define a certain complex of semi-algebraic chains
$AC_{\bullet}(P^n,\bold D^n; \QQ)$ which we call the complex of $\delta$-admissible chains. Roughly speaking, elements of the complex
$AC_\bullet$ are simplicial semi-algebraic chains $\ga$ such that both
$\ga$ and $\delta\ga$ meet the faces properly.


In \S \ref{sec:Face map and cubical chain}, 
we define a face map
$$
\partial_{i,\alpha}:
AC_i(P^n,\bold D^n;\QQ)\to
AC_{i-2}(P^{n-1},\bold D^n;\QQ)
\quad (1\leq i\leq n, \alpha=0,\infty)
$$
with respect to the hypersurface $H_{i,\alpha}=\{z_i=\alpha\}$. Roughly this map is
taking the intersection with the face $H_{i,\al}$. More precisely, it is defined
by the cap product
with a Thom cocycle $T$ of the face $H_{i,\al}$.
The cubical differential  $\part$ is defined to be the alternating sum of the face maps
$\part_{i,\al}$.  We point out that this face map is compatible with the algebraic face map in the following sense.
If $V$ be a closed subvariety of $P^n$ which meets all the faces properly, then the set of complex points of $V$ defines
an element of $AC_{2d}(P^n, \bold D^n; \QQ)$ where $d=\dim V$. Let $\part_{i,\al}^{alg}V$ be the intersection of $V$ with the subvariety
$H_{i,\al}$ of $P^n$ considered with multiplicity. The algebraic cycle $\part_{i,\al}^{alg}V$ defines an element 
of $AC_{2d-2}(P^{n-1},\bold D^{n-1};\QQ).$ In Proposition \ref{algtopeq} we show that
$$\part_{i,\al} V=\part_{i,\al}^{alg} V$$
in   $AC_{2d-2}(P^{n-1},\bold D^{n-1};\QQ).$

In \S \ref{sec:Generalized Caychy formula} we prove a generalized Cauchy formula.
Let $\ga=\sum a_\si\si$ be an element of $AC_{n}(P^n,\bold D^n;\QQ)$,
where $\sigma$'s are $n$-simplices in a triangulation $K$ of $P^n$
and $a_{\sigma}\in \QQ$. 
By a  result of Part I, the integral
$\displaystyle \int_\si \omega_n$ 
converges absolutely if $\si$ is admissible (Theorem \ref{convadmiss}) and
we define a homomorphism $I_{n}$ as the following integration:
\begin{align*}
&I_{n}:AC_{n}(P^n,\bold D^n;\QQ)\to \CC:\sum a_\si\si
\mapsto 
(-1)^{\frac{n(n-1)}{2}}\sum_{\sigma}\int_\si a_\si\omega_n.
\end{align*}
Then the generalized Cauchy formula 
(Theorem \ref{th:generalized Cauchy formula}) asserts that, for an element
$\ga\in AC_{n+1}(P^n,\bold D^n;\QQ)$,
 the equality
\begin{equation}
\label{commutative diagram for generalized cauchy formula}
I_{n-1}(\partial \gamma)+(-1)^nI_n(\delta\gamma)=0
\end{equation}
holds. When $\ga$ is a unit disc on $\CC$ whose center is the origin, 
the equality (\ref{commutative diagram for generalized cauchy formula}) is  Cauchy's
integral formula $\dis 1=\frac1{2\pi i}\int_{|z_1|=1}\frac{dz_1}{z_1}.$
If the chain  $\gamma$ does not meet any of the faces,
then we have $\part\ga=0$, and  (\ref{commutative diagram for generalized cauchy formula})
holds by  the Stokes formula.
In general, the correction term for the Stokes formula arising
from the logarithmic singularity is computed in terms of $\partial\ga$  . 
The main results of \cite{part I} are used in the proof
of Theorem \ref{th:generalized Cauchy formula} which  is  fairly complex.
If the reader is mainly interested in its applications  and is willing to accept this fact,
 it is possible to read \S \ref{sec: Hodge realization}
separately. We would like to mention a further generalization of the generalized Cauchy formula. Let $U$ be a smooth quasi-projective
complex variety, and let $\bold H$ be a simple normal crossing divisor on $U$. Let $\varphi$ be a smooth $p$-form on $U$ with compact support which has logarithmic
singularity along $\bf H$, and let $\ga$ be a $\delta$-admissible $(p+1)$-chain on $U$. Then in \cite{Ki} we show that  
the equality
\begin{equation}
\label{quasiproj}
2\pi i\int_{\part_{\bf H}\ga} \res \varphi=\int_{\delta\ga}\varphi-\int_\ga d\varphi
\end{equation}
holds. Here $\res \varphi$ is the Poincar\'{e} residue of $\varphi$ at $\bold H$. This generalized Cauchy  formula
can be used to describe the duality pairing between the de Rham cohomology groups of
the complement of $\bold H$ and and the  singular cohomology groups of $U$ relative to $\bold H$. See \cite{Ki}
for more detail. The essential technical facts needed to prove (\ref{quasiproj}) are proved in this paper.

We construct 
a variant of the Hodge realization functor
for the category of mixed Tate motives in  \S \ref{sec: Hodge realization}.
We briefly
recall the construction of the category of mixed Tate motives 
given
in the paper of Bloch and Kriz (\cite{BK}).
Let $\bold k$ be a subfield of $\CC$.
Bloch defines a graded DGA $N_{\bold k}$ of algebraic cycle complexes
of $\bold k$. The $0$-th cohomology $\chi_N:=H^0(B(N_{\bold k}))$
of the bar complex
$B(N_{\bold k})$ of $N_{\bold k}$ becomes a commutative 
Hopf algebra with a grading $\chi_N=\oplus_r(\chi_N)_r$.
They define the category of mixed Tate motives
as that of graded co-modules over the Hopf algebra $\chi_N$.
They also define the $\ell$-adic and the Hodge realization
functors
 from the category of
mixed Tate motives over $\Spec(\bold k)$ to 
that of $\ell$-adic Galois representations of the field $\bold k$,
and 
that of mixed Tate Hodge structures.
The paper \cite{G} is one of the works based on this construction.

In \cite{BK}, they also present an alternative construction
of the Hodge realization functor 
using integral of 
logarithmic differential forms 
$\omega_n$
on $P^n$. For this they assume   the existence of a
certain DGA $\cal D\cal P$ of topological chains satisfying the following conditions.

\begin{enumerate}
\renewcommand{\labelenumi}{(\alph{enumi})}
 \item 
The DGA $\cal D\cal P$ contains the DGA which is generated by the chains of
 the complex points  of the subvarieties generating $N_{\bold k}$ ,
\item
The integral of the form 
$\dfrac{dz_1}{z_1}\wedge \cdots \wedge \dfrac{dz_n}{z_n}$ on elements in $\cal D\cal P$
converges.
\item
The generalized Cauchy integral 
formula holds for the integral in (b).
\item
The natural map 
$\tau^*:H^*(B(N_{\bold k}))\to H^*(B(\cal D\cal P,N_{\bold k},\QQ))$ is $0$, which
implies the $E_1$-degeneracy of the 
spectral sequence obtained from a certain filtration 
on $B(\cal D\cal P,N_{\bold k},\QQ)$ (for the precise
statement see \cite{BK}  (8.6)).
\end{enumerate}

The cubical differential  $\part$ and  the topological differential $\delta$
make the direct sum of \newline $AC_i(P^n, \bold D^n; \QQ)^{\rm alt}$'s a double complex. Here
${}^{\rm alt}$ indicates that we take the subspace on which a certain group $G_n$  acts
by a certain character sign.  The associated
simple complex of this, which we denote by $AC^\bullet$, is a cohomological complex.  
Let $I$ be the map from $AC^\bullet$ to $\CC$ defined by  $I=\sum_n I_{n}$.
The equality
(\ref{commutative diagram for generalized cauchy formula})
implies that the map $I$
is a homomorphism of complexes. 
We use ${AC}^{\bullet}$ in place of $\cal D \cal P$, which enjoys the
following properties:
\begin{enumerate}
 \item 
There exists a natural injection $N_{\bold k}\to {AC}^\bullet$.
Via this map, we define an $\chi_N$ co-module $\cal H_{B}$ in 
 Definition \ref{def of universal mixed Hodge structure}.
\item
There is a canonical map $\QQ \to {AC}^\bullet$ which
is a quasi-isomorphism. 
\item
The above homomorphism $I$ induces a quasi-isomorphism from  $ {AC}^\bullet\ot \CC$ to $\CC$.
\end{enumerate}
Using the properties (2) and (3),
we show the $E_1$-degeneracy of the spectral sequence obtained 
from a similar filtration on $B(\QQ,N_{\bold k}, {AC}^\bullet)$
as in (d).


We give a rough sketch of our construction of the Hodge realization. We  define an $\chi_N$ co-module $\cal H_{B}$ resp. $\cal H_{dR}$ as $H^0(B(\QQ,N_{\bold k},AC^\bullet))$
resp. $H^0(B(\QQ,N_{\bold k},{\bold k})$.
The homomorphism $I$ in (3) yields a comparison isomorphism
$c:\cal H_B\otimes \CC \to \cal H_{dR}\otimes \CC$,
and via this comparison map,
we construct a ``universal'' mixed Tate Hodge structure
$\cal H_{Hg}=(\cal H_{B},\cal H_{dR},c)$ 
with a left ``coaction'' $\Delta_{Hg}$ of 
$\cal H$ (see (\ref{Hhodge})).
We define a functor $\Phi$ from
the category of graded right $\chi_N$-co-modules
$(V,\Delta_V)$ to that of mixed Tate Hodge structures by
the ``cotensor product''
$$
\Phi(V)=\ker\bigg(V\otimes  \cal H_{Hg}\xrightarrow{\Delta_V\otimes
id-id\otimes \Delta_{Hg}}
V\otimes \chi_N
\otimes \cal H_{Hg}\bigg).$$ 

In the sequel to this paper, we will prove that the 
 above functor $\Phi$ is isomorphic to that defined by
Bloch-Kriz.

In Subsection \ref{flat conn} we explain about  the relation of Bloch-Kriz mixed Tate motives to a more recent construction of the category
of mixed Tate motives. For each field $k$, Hanamura constructed a triangulated category ${\cal D}(k)$ of mixed motives in \cite{Ha1} 
and 
\cite{Ha2}.  Let ${\cal D}T(k)$ be the sub triangulated category of ${\cal D}(k)$
generated by Tate objects.  If one assumes the Beilinson-Soul\'e vanishing conjecture and the $K(\pi,1)$ conjecture, the heart of ${\cal D}T(k)$
for a certain $t$-structure should be equivalent to the category of {\it flat} $N_k$-{\it connections}, which is equivalent to
the category of Bloch-Kriz mixed Tate motives. We will show that there exists a natural Hodge realization functor on the category
of flat $N_k$-connections, which is compatible with the one we define for Block-Kriz mixed Tate motives. 
Hanamura's category ${\cal D}(k)$ is known to be equivalent to Voevodsky's category $DM(k)$ (\cite{Voe}) and to
Levine's category ${\cal DM}(k)$ (\cite{Lev2}). Deligne and Goncharov define a category of mixed Tate motives over the ring
of integers of a each number field $k$ in \cite{DG}. 

After the work of Bloch-Kriz, much progress has been made on the theory of motives.
This of course includes construction of the Hodge realization functors on the cateogry
of motives. On the other hand, we have been continuously interested in the 
periods arising from motives. 
Unfortunately, the Hodge realization functors constructed so far are not very
suitable to see the concrete period integral associated to algebraic cycles. We believe
that our construction of a Hodge realization of mixed Tate motives provides
a significant step towards the understanding of periods associated to motives.

\noindent
{\bf Acknowledgment}

We would like to thank Professor T. Suwa for helpful discussions 
on the intersection theory
of semi-algebraic sets. We are grateful to the referee for careful reading of the manuscript
and suggestions which improved the exposition.

\section{Admissible chain complexes}
\label{sec:admissible chain complex}


\subsection{Semi-algebraic triangulation}
\label{sectionsemi-alg}

Let $k$ be a non-negative integer. A $k$-simplex in a Euclidean 
space $\R^n$ is the convex hull of 
affinely independent points $a_0,\cdots,
a_k$  in $\R^n$.
{\it A finite simplicial complex} of $\R^n$ is a finite set $K$ consisting of  simplexes 
 such that (1) for all $s\in K$, all the faces of $s$
belong to $K$  and (2) for all $s,t\in K$, $s\cap t$ is either the empty set
or a common face of $s$ and $t$.
We denote by $K_p$ the set of $p$-simplexes
of $K$.
For a simplex $\sigma$ in $K$, the relative interior of $\sigma$ 
(=the complement of the union of the proper faces of $\sigma$)
is denoted by $\sigma^{\circ}$.
For a finite simplicial complex $K$, the union of  simplexes
in $K$ as a subset of $\R^n$ is denoted by $|K|$. A {\it semi-algebraic subset} of $\R^n$
is a subset of the form
\[\underset{i=1}{\overset{s}\cup}\underset{j=1}{\overset{r_i}\cap}\{x\in \R^n\,|\, 
f_{i,j}\ast_{i,j} 0\}\]
where $f_{i,j}\in \R[X_1,\cdots,X_n]$ and $\ast_{i,j}$ is either $<$ of $=$, for
$i=1,\cdots, s$ and $j=1,\cdots, r_i$. 
As for the  fundamental
properties of semi-algebraic sets, see \cite{BCR}.
\begin{theorem}[\cite{BCR}, Theorem 9.2.1]
\label{thm:semi-algebraic triangulation}
Let $P$ be a compact semi-algebraic subset of $\R^m$. The set $P$ is triangulable, i.e. there exists
a finite simplicial complex $K$ and a semi-algebraic homeomorphism
$\Phi_K:\,|K|\to P$. Moreover, for a given finite family 
$S=\{S_j\}_{j=1,\cdots,q}$ of semi-algebraic
subsets of $P$, we can choose a finite simplicial complex $K$
and a semi-algebraic homeomorphism
$\Phi_K:\,|K|\to P$ such that every $S_j$ is the
union of a subset of $\{\Phi_K(\sigma^\circ)\}_{\sigma\in K}.$
\end{theorem}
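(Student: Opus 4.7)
The plan is to induct on the ambient dimension $m$, with cylindrical algebraic decomposition (CAD) as the principal tool.

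First, I would invoke the Tarski--Seidenberg theorem and the CAD algorithm to produce a finite decomposition of $\R^m$ into semi-algebraic cells $\{C_i\}$, each semi-algebraically homeomorphic to an open cube $(0,1)^{d_i}$, with the properties that $P$ is a union of cells, every $S_j$ is a union of cells, and the family is cylindrical over the projection $\pi:\R^m\to\R^{m-1}$. Cylindricality means that over each lower cell $D\subset\R^{m-1}$ the preimage $\pi^{-1}(D)\cap P$ is a finite union of graphs of continuous semi-algebraic functions $\xi_1<\cdots<\xi_r:D\to\R$ together with the open bands between consecutive graphs and the unbounded pieces above/below.

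Second, I would run the induction. The case $m=0$ is immediate. Assuming the theorem in dimension $m-1$ and applying it to $\pi(P)$ together with the finite family $\{\pi(C_i)\}$, I obtain a finite simplicial complex $K_0\subset\R^{m-1}$ and a semi-algebraic homeomorphism $\Phi_0:|K_0|\to\pi(P)$ such that every $\pi(C_i)$ is a union of images $\Phi_0(\tau^\circ)$ with $\tau\in K_0$.

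Third, I would lift $K_0$ to a triangulation of $P$. For each closed simplex $\tau\in K_0$, pull back via $\Phi_0$ the functions $\xi_1<\cdots<\xi_r$ that describe the portion of $P$ over $\Phi_0(\tau^\circ)$. After a sufficient refinement of $K_0$ (a semi-algebraic Thom-type lemma is used to arrange that each $\xi_k$ extends continuously to the closure of $\tau$ and that no two coincide in $\tau^\circ$), each band over $\tau$ becomes a semi-algebraic prism $\tau\times[0,1]$ up to semi-algebraic homeomorphism. Fixing a total order on the vertices of $K_0$, the order-dependent standard triangulation of a prism into $\dim\tau+1$ simplices yields decompositions that automatically agree on shared faces of adjacent simplices. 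Together with the graphs of the $\xi_k$ as simplicial subcomplexes, this produces the desired finite simplicial complex $K$ and semi-algebraic homeomorphism $\Phi_K:|K|\to P$ compatible with the $S_j$.

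The main obstacle is the compatibility at the third step: guaranteeing that the prism triangulations over neighboring simplices of $K_0$ glue into a global simplicial complex. The refinement of $K_0$ used to control the limiting behavior of the $\xi_k$ on the boundary is the essential technical content, and the use of a vertex-order-dependent prism triangulation is the combinatorial device that makes the global gluing coherent. Once this is handled, packaging the pieces into $K$ is routine.
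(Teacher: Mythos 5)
The paper does not prove this statement at all: it is quoted verbatim from Bochnak--Coste--Roy \cite{BCR}, Theorem 9.2.1, and used as a black box, so there is no in-paper argument to compare yours against. Your sketch --- induction on the ambient dimension, a cylindrical algebraic decomposition adapted to $P$ and the $S_j$, triangulation of the projection by the inductive hypothesis, and an order-dependent prism triangulation of the lifted bands --- is essentially the standard proof given in that reference, so in spirit you are reproducing the cited argument rather than finding a new route.

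One step of your outline is thinner than it looks, and it is exactly where the real work lies in \cite{BCR}: the assertion that ``after a sufficient refinement of $K_0$'' each $\xi_k$ extends continuously to the closed simplices and that the cylindrical data is compatible with closures. A raw cylindrical decomposition does not guarantee that the closure of a cell is a union of cells, nor that the slicing functions extend continuously to the boundary of a base cell, and refining the base triangulation alone does not obviously repair this (the failure can be concentrated at boundary points of the base no matter how it is subdivided). In the standard proof this is handled \emph{before} projecting, by a generic linear change of coordinates putting the sets in good position with respect to the projection direction, together with a lemma on continuous extension of the slicing functions; invoking an unspecified ``semi-algebraic Thom-type lemma'' at this point leaves the key technical content unproved. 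The gluing of the prism triangulations via a global vertex order is fine once that compatibility is secured.
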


Here we note that a map between semi-algebraic sets is called semi-algebraic if the graph of the 
map is a semi-algebraic set. 
\begin{remark}
\label{facewise regular embedding}
\begin{enumerate}

\item By  \cite{BCR} Remark 9.2.3 (a), the map $\Phi_K$ can be taken so that the map 
$\Phi_K$ is {\it facewise regular embedding} i.e. for each $\sigma\in K$, 
$\Phi_K(\sigma^\circ)$  is a  regular
submanifold of $\R^m$. 

\item The pair $(K, \Phi_K)$ 
as in Theorem \ref{thm:semi-algebraic triangulation}
is called a 
{\it semi-algebraic triangulation}\index{semi-algebraic triangulation} of $P$; we will then identify $|K|$ with $P$. 
A projective real or complex variety $V$ is regarded  as a compact semi-algebraic subset 
of an Euclidean space by \cite{BCR} Theorem 3.4.4, thus the above theorem applies to $V$. 
\end{enumerate}
\end{remark}

\begin{notation}
\label{loose notation}
For a subcomplex $L$ of $K$,  the 
space $|L|$ is a subspace of $|K|$.  
A subset of $|K|$ of the form
$|L|$ is also called a subcomplex. If a subset $S$ of $|K|$ is equal to $|M|$ for a subcomplex $M$
of $K$, then $M$ is often denoted by $K\cap S$.
\end{notation}


\subsection{The complex
$AC_{\bullet}(P^n,\bold D^n;\QQ)$}
\label{subsec:def of admissible chain}
Let $(z_1,\cdots,z_n)$ be the coordinates of $P^n=(\P^1_\CC)^n$. For $i=1,\cdots,n$ and $\al=0,\infty$,
we denote by $H_{i,\al}$ the subvariety of $P^n$ defined by $z_i=\al$.  The intersection of several $H_{i,\al}$'s is called a {\it cubical face}.
We set
\begin{align}
\label{notation of divisors}
&\bold H^n=\bigcup_{\substack{1\leq i\leq n\\
\alpha=0, \infty}} H_{i,\alpha}, \quad
\bold D^n=\bigcup_{i=1}^n \{z_i=1\},  \quad \square^n=P^n-\bold D^n.
\end{align}
\index{$\bold H^n$}
\index{$\bold D^n$}
\index{$\square^n$}

In the following we suppose that each triangulation $K$ of  $P^n$ is semi-algebraic , and that the divisor
$\bold D^n$ is a subcomplex of $K$.  
Let $K$ be a  triangulation of $P^n$. We denote by $C_\bullet(K;\QQ)$ resp. $C_\bullet(K,\bold D^n;\QQ)
(=C(K;\QQ)/C(K\cap \bold D^n;\QQ))$ 
the chain complex resp. the relative chain 
complex of $K$. An element of $C_p(K;\QQ)$ for $p\geq 0$ is written as $\sum a_\si \si$
where the sum is taken over $p$-simplexes of $K$. By doing so, it is agreed upon that an orientation
has been chosen for each $\si$.  By abuse of notation, an element of $C_\bullet(K,\bold D^n;\QQ)$ 
is often described similarly. 

\begin{definition}
 For an element $\gamma=\sum a_\si \sigma$ of
$C_{p}(K;\QQ)$, we define
the support\index{support} $|\gamma|$ of $\gamma$ as the subset of $|K|$
 given by
\begin{equation}
\label{support}
|\gamma|=\bigcup_{
\substack{\sigma\in K_p\\ a_\sigma\neq 0}} \sigma.
\end{equation}
\end{definition}
Under Notation \ref{loose notation}, For
an element $\gamma=\sum a_\si \sigma$ of
$C_{p}(K;\QQ)$, 
$|\gamma|$ is sometimes regarded as a subcomplex of $K$.

\begin{definition}
\label{def:semi-alg current}
Let $p \geq 0$ be an integer.
\begin{enumerate}

\item(Admissibility) A semi-algebraic subset $S$ of $P^n$
is 
said to be admissible\index{admissible} if for each cubical face $H$, 
the inequality 
$$
\dim(S\cap (H-\bold D^n))\leq \dim S -2\,\codim H
$$
holds.  Here note that $\dim(S\cap (H-\bold D^n))$ and $\dim S$ are the dimensions as semi-algebraic sets,
and $\codim H$ means the codimension of the subvariety $H$ of $P^n$.

\item Let $\gamma$ be an element of $C_p(K,\bold D^n; \QQ)$.
Then $\ga$ is 
said to be admissible if 
the support of a representative of $\ga$ in $C_p(K;\QQ)$ is admissible. This condition is independent of the choice
of a representative. 
\item We set 
$$
AC_{p}(K,\bold D^n;\QQ)
=\{\gamma\in C_{p}(K,\bold D^n;\QQ))
\mid 
\gamma \text{ and } \delta \gamma \text{ are admissible }\}
$$
\index{$AC_{p}(K,\bold D^n;\QQ^{q})$}
\end{enumerate}
\end{definition}

\subsection{Subdivision and inductive limit}

\begin{definition}
Let $(K,\,\Phi_K:\,|K|\to P)$ be a  triangulation of  a compact semi-algebraic set $P$. Another 
triangulation $(K',\,\Phi_{K'}:\,|K'|\to P)$ is {\rm a subdivision of}\index{subdivision} $K$ if :
\begin{enumerate}
\item The image of each simplex
of $K'$ under the map $\Phi_{K'}$ is contained in  the image of a simplex of $K$
under the map $\Phi_K$.
\item The image of each simplex of $K$ under the map $\Phi_K$ is the union of the images of
simplexes of $K'$ under $\Phi_{K'}$.
\end{enumerate}
\end{definition}
If $K'$ is a  subdivision of a  triangulation $K$, there is 
a natural homomorphism of complexes
$\lambda:
C_{\bullet}(K,\bold D^n;\QQ)\to 
C_{\bullet}(K',\bold D^n;\QQ)$ called the subdivision operator. See for example  
 \cite{Mu} Theorem 17.3 for the definition. For a simplex $\si$ of $K$, the chain $\la(\si)$ is carried
by $K'\cap \si$, and so that the map $\la$ sends $AC_{\bullet}(K,\bold D^n;\QQ)$ to 
$AC_{\bullet}(K',\bold D^n;\QQ)$. 
By Theorem \ref{thm:semi-algebraic triangulation} two  semi-algebraic triangulations
have a common subdivision. 
Since the map $\lambda$
and the differential $\delta$ commute, the complexes 
$C_{\bullet}(K,\bold D^n;\QQ)$ and 
$AC_{\bullet}(K,\bold D^n;\QQ)$ form
inductive systems indexed by triangulations $K$ of $P^n$.
\begin{definition}

\label{def:definition of AC with inductive limit}
We set
\begin{align*}
C_{\bullet}(P^n,\bold D^n;\QQ) =
\underset{\underset{K}\longrightarrow}{\lim } \ 
C_{\bullet}(K,\bold D^n;\QQ), \quad
AC_{\bullet}(P^n,\bold D^n;\QQ) =
\underset{\underset{K}\longrightarrow}{\lim } \ 
AC_{\bullet}(K,\bold D^n;\QQ).
\end{align*}
Here the limit is taken on the directed set of  triangulations.

\end{definition}
\index{$C_{\bullet}(P^n,\bold D;\QQ^{\bullet})$}
\index{$AC_{\bullet}(P^n,\bold D;\QQ^{\bullet})$}


A proof of the following Proposition will be given in Appendix \ref{section moving lemma}.
\begin{proposition}[Moving lemma] 
\label{prop: moving lemma}
The inclusion of complexes
\begin{equation}
\label{moving quasi-iso}
\iota:\,\,AC_{\bullet}(P^n,\bold D; \QQ) 
\to C_{\bullet}(P^n,\bold D; \QQ)
\end{equation}
is a quasi-isomorphism.
\end{proposition}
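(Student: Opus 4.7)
\smallskip

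\noindent\emph{Proof plan.} The plan is to reduce the quasi-isomorphism claim to a single moving statement, and then to move chains into admissible position by translating them by a generic element of the torus $T=(\CC^*)^n$ acting coordinatewise on $P^n$. Recall that the torus action preserves each cubical face $H_{i,\alpha}$ (since $t_i\cdot 0=0$ and $t_i\cdot\infty=\infty$), but does \emph{not} preserve the divisor $\bold D^n=\bigcup\{z_i=1\}$; this is the source of the main technical complication, and we shall exploit that we are working in the relative complex modulo $\bold D^n$.

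First I would unwind what it means for $\iota$ to be a quasi-isomorphism. Since $AC_\bullet$ is defined as a subcomplex of $C_\bullet$, it suffices to prove: (i) every relative cycle $\gamma\in C_p(P^n,\bold D^n;\QQ)$ is homologous to an element $\gamma'\in AC_p$; and (ii) if $\gamma\in AC_p$ satisfies $\gamma=\delta\eta$ for some $\eta\in C_{p+1}$, then we may choose such an $\eta$ in $AC_{p+1}$. Both follow from one master statement: given $\gamma\in C_p(K,\bold D^n;\QQ)$ and (if applicable) a chosen admissible $\delta\gamma$, there exists a refinement $K'$ of $K$, an element $\gamma'\in AC_p(K',\bold D^n;\QQ)$ and $\eta\in C_{p+1}(K',\bold D^n;\QQ)$ with $\gamma-\gamma'=\delta\eta$, such that moreover when $\gamma$ is already admissible we may take $\eta\in AC_{p+1}$.

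The moving is produced as follows. For each cubical face $H$ and each $p$-simplex $\sigma$ of a triangulation, a dimension count combined with Sard's theorem in the semi-algebraic category (applied to the evaluation map $T\times\sigma^\circ\to P^n$, $(t,x)\mapsto t\cdot x$) shows that for generic $t\in T$ the translate $t\cdot|\sigma|$ meets $H\setminus\bold D^n$ in dimension at most $\dim\sigma-2\codim H$. Taking $t$ to lie outside a countable union of proper semi-algebraic subsets of $T$ of lower dimension makes the translate $t\cdot\gamma$ admissible with respect to all cubical faces at once. I would then join $1$ to $t$ by a short generic semi-algebraic path $t(s):[0,1]\to T$ and form the cylinder $[0,1]\times|\gamma|\to P^n$, $(s,x)\mapsto t(s)\cdot x$; after semi-algebraic triangulation this produces an $(n{+}1)$-chain $\eta$ with $\delta\eta=t\cdot\gamma-\gamma-t(\cdot)\cdot\delta\gamma$ in $C_\bullet(\bullet,\bold D^n;\QQ)$. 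Choosing the path generically, the cylinder itself satisfies the admissibility inequality for cubical faces, and $\delta\gamma$ being admissible by hypothesis propagates to the cylinder of $\delta\gamma$ (this is the ingredient needed to secure (ii)).

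The main obstacle, as noted above, is that $T$ does not preserve $\bold D^n$: the path $t(s)$ drags the divisor $\bold D^n$ to $\bigcup_i\{z_i=t_i(s)\}$, and pieces of the cylinder that would naively lie in $\bold D^n$ do not do so any longer. I would address this by choosing the path $t(s)$ inside a neighbourhood of $1\in T$ so small that the resulting homotopy is transverse to each stratum $H\cap\{z_j=1\}$ in the needed codimension, and by refining the triangulation of $[0,1]\times|\gamma|$ so that the portion originally carried by $\bold D^n$ is tracked by a subcomplex which bounds a chain in $\bold D^n$; such a chain is killed in the relative complex. Verifying these transversality and containment statements simultaneously, uniformly across the finitely many cubical faces and for both $\gamma$ and $\delta\gamma$, is the delicate part of the argument and the reason this lemma is deferred to an appendix.
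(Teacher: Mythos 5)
There is a genuine gap, and it lies exactly in the step you rely on to do all the work: the claim that for generic $t\in T=(\CC^\ast)^n$ the translate $t\cdot|\sigma|$ meets $H\setminus\bold D^n$ in dimension at most $\dim\sigma-2\codim H$. You chose the coordinatewise torus precisely because it preserves each cubical face $H_{i,\alpha}$, but that invariance is fatal to the moving: since $t\cdot H=H$, one has $t\cdot|\gamma|\cap H\supset t\cdot(|\gamma|\cap H)$, which has the same dimension as $|\gamma|\cap H$ for every $t$. So if a chain already meets a face in excess dimension inside the face (e.g.\ a $2$-cycle lying entirely in $H_{1,0}\setminus\bold D^n$ for $n\ge 2$ --- exactly the kind of representative the moving lemma must repair), no choice of $t$, generic or not, makes the translate admissible. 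Kleiman/Sard-type genericity for a non-transitive action only gives proper intersection orbit by orbit; it can never push a chain off a $T$-invariant subvariety. Worse, because $T$ does not preserve $\bold D^n$, genericity actively hurts: the part of $|\gamma|$ (and of $|\delta\gamma|$) sitting in $H\cap\bold D^n$, which the admissibility condition exempts, is dragged out of $\bold D^n$ while remaining inside $H$, so it starts counting against admissibility and cannot be removed by shrinking the path. Concretely, writing $C(\delta\gamma)$ for the cylinder swept by $\delta\gamma$, the corrected relative cycle $t\cdot\gamma-C(\delta\gamma)$ (which is what your $\delta\eta=t\cdot\gamma-\gamma-C(\delta\gamma)$ actually exhibits as homologous to $\gamma$) contains the sweep of $\delta\gamma\cap H\cap\bold D^n$, a subset of $H\setminus\bold D^n$ of dimension up to $\dim\delta\gamma\cap H+1$; your proposed remedy that this portion ``bounds a chain in $\bold D^n$'' is not available, since the swept chain simply does not lie in $\bold D^n$ and hence does not die in the relative complex.

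The paper's proof avoids both problems by abandoning any group action and invoking Zeeman's PL general-position theorem (Theorems \ref{Zeeman moving}, \ref{Zeeman moving2}): there is an \emph{ambient PL isotopy} $h$ of $P^n$ which fixes $\bold D^n$ pointwise (and, for the injectivity half, fixes $\bold D^n\cup|\gamma|$) while putting $h_1(|X|-X_0)$ in general position with respect to all cubical faces simultaneously, so $\dim(h_1(|X|-X_0)\cap H)\le \dim X+\dim H-2n=\dim X-2\codim H$. An ambient isotopy is under no obligation to preserve the faces, so it can genuinely push a chain off a face, which your torus cannot do; and because it fixes $\bold D^n$, the simplicial cylinder $h_\ast(\sigma\times[0,1])$ has all its boundary contributions along $\bold D^n$ carried by $\bold D^n$ (the terms $h_\rho$ with $\rho\subset\bold D^n$ vanish), so no correction term like $C(\delta\gamma)$ ever appears and $h_1(\gamma)$ is an admissible relative cycle homologous to $\gamma$. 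If you want to salvage a group-theoretic argument you would need a group acting transitively on $P^n$ (such as $(\mathrm{PGL}_2)^n$) to get proper intersection with the faces, but then the faces and $\bold D^n$ are both moved and the same boundary-correction problem reappears; the fixed-point flexibility of the PL isotopy is what makes the paper's argument go through.
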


\section{Face map and cubical differential}
\label{sec:Face map and cubical chain}

Recall that a subcomplex $L$ of a simplicial complex
$K$ is called a full subcomplex \index{full subcomplex}, 
if a simplex $\si$ in $K$ belong to $L$ whenever all the vertices of $\si$ are in $L$.
\begin{definition}[Good triangulation]
\label{good triangulation}
We define a family $\cal L$ of subsets of $P^n$ by
$$
\cal L=\{H_{J_1}\cup \cdots\cup H_{J_k}\}_{(J_1, \dots, J_k)},
$$ 
where $H_{J_j}$ are cubical faces of $P^n$. In short, a member of $\cal L$ is the union of several
cubical faces.
A finite semi-algebraic triangulation $K$ of $P^n$ is called a good triangulation \index{good triangulation} if $K$
satisfies the following conditions.
\begin{enumerate}
\item The divisor $\bold D^n$ is a subcomplex of $K$.
\item The map $\Phi_K:\,|K|\to P^n$ is facewise regular embedding. cf. Remark \ref{facewise regular embedding}.
\item 
\label{faces are full subcomplex}
Each element $L_i\in \cal L$ is a full subcomplex of $K$ i.e.  there exists a full subcomplex $M_i$ of $K$ such that $L_i=|M_i|$. 
\item  For $i=1,\cdots,n$, the subsets $\{|z_i|\leq 1\}$ are subcomplexes of $K$. 
\end{enumerate}

\end{definition}
In particular, if $K$ is a good triangulation,
then for any simplex $\si$ of 
$K$ and $L_i\in \cal L$,
the intersection $\si\cap  L_i$ is a (simplicial) face of $\si$. This is the primary reason to consider the condition (3).

\begin{remark} 
\label{barycentric subdiv is good}
For a simplicial complex $K$, we denote by $\sd K$ its barycentric subdivision. If $L$ is a subcomplex
of $K$, then $\sd L$ is a full subcomplex of $\sd K$ (See for example Exercise 3.2 of \cite{RS}).   It follows that 
if  $K$ is a semi-algebraic triangulation of 
$P^n$ which  is a facewise regular embedding, and such that
$\bold D^n$ and each $L_i\in \cal L$ are  subcomplexes of $K$, then $\sd K$ is a good triangulation.
\end{remark}

In the following, each triangulation $K$ of $P^n=(\P^1_\CC)^n$ is assumed to be good in the sense of Definition
\ref{good triangulation}.

\subsection{Thom cocycles}
\subsubsection{Definition of Thom cocycles}
\label{cap product fixing simplicial ordering}
We first recall the definition of the Thom class of a submanifold. Let $X$ be a compact orientable
manifold of dimension d, and let $S$ be a closed submanifold of $X$ of codimension
$c$. We have Lefschetz duality 
$$H^c(X,X-S; \ZZ)\simeq H_{d-c}(S;\ZZ).$$ 
The Thom class of $S$ which we denote by $\Psi(S)$, is the class in 
$H^c(X,X-S; \ZZ)$ which is mapped to the fundamental class of $S$ in $H_{d-c}(S)$.
Let $i:\,S\to X$ be the inclusion. Then the Poincar\'{e} dual of 
$i^*:\,H^\bullet(X)\to H^*(S)$ is given by the cap product
$$\al\mapsto \Psi(S)\cap \al,\,\,H_\bullet(X)\to H_{\bullet-c}(S).$$
cf. \cite{Iv} X.4.  
Let $K$ be a good triangulation of $P^n=(\P^1_\CC)^n$.  

Let $\Delta$ be the subset $\{|z_1|< 1\}$ of  $P^n$. 
We set $L_1=K\cap   H_{1,0}$ and
$$
\dis N= \underset{\si\in K,\,\si\cap   H_{1,0}\neq \emptyset}\cup \si
,\quad W=\underset{\si\in K,\,\si \cap   H_{1,0}= \emptyset}\cup \si  .
$$

For spaces $X$ and $Y$ such that $X\supset Y$, we denote by  $H^i_{\sing}(X;\QQ)$ (resp. $H^i_{\sing}(X,Y;\QQ)$) the singular cohomology
(resp. the relative singular cohomology) of $X$ (resp. $(X,Y)$) with $\QQ$-coefficients.
Under the comparison isomorphism 
$H^1_{\sing}(\Delta-  H_{1,0};\C)\simeq H^1_{dR}(\Delta-  H_{1,0})$,
the de Rham class $\big[\dfrac{dz_1}{2\pi i z_1}\big]$
of $\dfrac{dz_1}{2\pi i z_1}$
is contained in the subgroup
$H^1_{\sing}(\Delta-  H_{1,0};\QQ)$ of $H^1_{\sing}(\Delta-  H_{1,0};\C)$.
Since $L_1$ is a full subcomplex of $K$, $W$ is a deformation retract of $P^n-  H_{1,0}$.
(This can be seen as follows.
Each simplex $\si$ in $N$ not contained in $L_1$ is the join $\tau*\eta$ where $\tau\in L_1$ and $\eta\in W$.
The linear projection $\pi_{\si,\eta}:\,\si-\tau\to \eta$ in Definition \ref{linear proj} is glued  to a deformation
retract from $P^n-H_{1,0}$ to $W$.)
Therefore we have isomorphisms
\begin{equation}
\label{before thoem cycycle}
H^2(K, W;\QQ)\xleftarrow{\simeq}  H^2_{\sing}(P^n, P^n-  H_{1,0};\QQ)
\xrightarrow{\simeq} H^2_{\sing}(\Delta, \Delta-  H_{1,0};\QQ).
\end{equation}
Here the second isomorphism holds by excision, and $H^2(K,W;\QQ)$
is the simplicial cohomology group.

\begin{definition}
\label{Thom cocycle}
A simplicial cocycle T in $C^2(K,W;\QQ)$ is a Thom cocycle\index{Thom cocycle} if 
its cohomology class is equal to
$\delta \big[\dfrac{dz_1}{2\pi i z_1}\big]$
in $H^2_{\sing}(\Delta, \Delta-  H_{1,0};\QQ)$
via the isomorphism (\ref{before thoem cycycle}).
Here $\delta$ denotes the connecting homomorphism
$$
\delta:H^1_{\sing}(\Delta-  H_{1,0};\QQ)\to H^2_{\sing}(\Delta, \Delta-  H_{1,0};\QQ).
$$
A $\C$-valued Thom cocycle in $C^2(K, W;\CC)$ is defined similarly.
\end{definition}

We will give some examples of Thom cocycles. 

\subsubsection{Thom form}
\label{Thom form}
Let $\epsilon$ be a positive real number and
Let $\rho$ be a $[0,1]$-valued $C^{\infty}$-function 
on $\Delta$ 
such that
\begin{enumerate}
\item
$\rho=0$ on $\Delta\cap \{ |z_1|<\dfrac{1}{2}\epsilon \}$.
\item
$\rho=1$ on $\Delta\cap \{ \epsilon<|z_1|\}$.
\end{enumerate}
Then $c_{\rho}=\dfrac{1}{2\pi i}\rho \dfrac{dz_1}{z_1}$ defines an element of
$C^1_{\sing}(P^n-H_{1,\infty};\CC)$. 
We set $T_{\rho}=dc_{\rho}=\dfrac{1}{2\pi i}d\rho \wedge \dfrac{dz_1}{z_1}$.
For $\e$ sufficiently small we have $c_\rho= \dfrac{1}{2\pi i} \dfrac{dz_1}{z_1}$ on $W-H_{1,\infty}$, 
and $T_\rho$ defines the same class as $\delta[ \dfrac{dz_1}{2\pi i z_1}]$ in $H^2_{\sing}(P^n-H_{1,\infty},
W-H_{1,\infty};\CC)\simeq H^2_{\sing}(P^n,
W;\CC)$, so it is a rational class. Take a partial ordering on the set of vertices of $K$
which gives a total ordering  on the set of the vertices of each simplex of $K$.
Then we have a map of complexes
$$ c_{\cal O}:\,\,C^\bullet_{sing}(P^n,W;\CC)\to C^\bullet (K,W;\CC)$$
which is a quasi-isomorphism. Moreover, The maps on the cohomology
groups induced by $c_{\cal O}$ is independent of the ordering. 
\begin{definition}
\label{def Thom form}
\index{Thom form $T_{\rho}$}
The cocycle $T_{\rho}\in C^2(K, W;\CC)$ is a 
$\CC$-valued Thom cocycle.
We call the above cocycle $T_\rho$ 
a  Thom form. $T_\rho$ depends on the choice of an ordering $\cal O$ of $K$.
\end{definition}

\subsubsection{Singular Thom cocycle $T^B_{  H_{1,0}}$}
\label{subsubsect:Thom cochain}
We first note that this cocycle is not used in the rest of the paper,  and we omit the proof that
this is a Thom cocycle.
Suppose that $\Delta$ is a subcomplex of $K$ and $\Delta \cap W\subset \Delta-\{0\}$
is a deformation retract.
For a $1$-simplex $\sigma \in C^1(\Delta)$, we set 
$$
L^B(\gamma)=\begin{cases}
0 
&\quad\text{ if }\sigma\not\subset W \\	   
\bigg[\displaystyle\dfrac{1}{2\pi}
\bigg(\operatorname{Im} \int_{\sigma}\dfrac{dz_1}{z_1}+\arg(\gamma(0))\bigg)\bigg]
&\quad\text{ if }\sigma\subset W.
\end{cases}
$$
Here 
$\arg(z)$ is the argument of a complex number $z$ in $[0,2\pi)$.
Note that the cochain $L^B$ counts the intersection number (with sign) of $\sigma$ and
the positive part of real axis.
Then $T^B_{  H_{1,0}}=dL^B\in C^2(\Delta,\Delta\cap W)$ becomes a Thom cocycle.

We remark that the cocycle $T^B_{  H_{1,0}}$
counts the winding number of the boundary of relative $2$-cycle.

\subsection{The cap product with a Thom cocycle}
\label{subsec:cap prod and thom cocycle}
\subsubsection{Simplicial cap product}
\begin{definition}[Ordering of complex, 
good ordering]
\label{def:good simplical ordering}
Let  $K$ be a good triangulation of $P^n$. 
\begin{enumerate}
\item
A partial ordering on the set of vertices in $K$ is called 
an ordering\index{ordering} of $K$,
if the
     restriction of the ordering to each simplex is a total ordering.
\item  
Let $H_J$ be a cubical face of $P^n$.
An ordering of $K$ is said to be good with respect to $H_J$ if it satisfies the following condition.
If  a vertex $v$ is on $H_J$  and  $w\geq v$ for a vertex $w$, then  $w \in H_J$. 
\index{good ordering}
\end{enumerate}
\end{definition}

We denote by $[a_0,\cdots, a_k]$\index{$[a_0,\cdots, a_k]$} the simplex spanned by $a_0,\cdots, a_k$.
Let $\co$ be a good ordering of $K$ with respect to $  H_{1,0}$.
We recall the definition of the cap product\index{cap product}
\newline $\overset{\co}\cap:C^p(K;\QQ)\otimes C_k(K;\QQ) \to C_{k-p}(K;\QQ)$. 
For a simplex $\al=[v_0,\cdots,v_k]$ such that $v_0< \cdots <v_k$ and $u\in C^p(K;\QQ)$, we define 
\begin{equation}
\label{def:simplicial cap product}
u\overset{\co}\cap \alpha=u([v_0, \dots, v_p])[v_{p},\dots, v_k].
\end{equation}
 One has the boundary formula
\begin{equation}
\label{boundary formula}
\delta(u\overset{\co}\cap \alpha)=(-1)^p(u\overset{\co}\cap (\delta\alpha)
-(du)\overset{\co}\cap \alpha)
\end{equation}
where $du$ denotes the coboundary of $u$, see \cite{Hat}, p.239 (note the difference
in sign convention from \cite{Mu}). 
Thus if $u$ is a cocycle, $\delta(u\overset{\co}\cap \alpha)
=(-1)^pu\overset{\co}\cap (\delta\alpha).$

\begin{proposition}
\label{basic property of face map 1}
Let $T$ be a Thom cocycle and $\co$ a good ordering of $K$ with respect to $  H_{1,0}$.
\begin{enumerate}

\item The map $T\overset{\co}{\cap}$ 
and the
differential $\delta$ commute. 
\item 
The image of the homomorphism 
$T\overset{\co}{\cap} $ is contained in $C_{k-2}(L_1;\QQ)$,
where $L_1=K\cap   H_{1,0}$.
 As a consequence, we have a homomorphism of
 complexes
\begin{equation}
\label{cap prod no cond}
T\overset{\co}{\cap} :\,\,C_k(K, \bold D^n; 
\QQ) \to
C_{k-2}(L_1, \bold D^{n-1}; \QQ).
\end{equation}
\end{enumerate}
\end{proposition}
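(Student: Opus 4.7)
The plan is to get (1) immediately from the boundary formula (\ref{boundary formula}), and then to reduce (2) to a combinatorial check on a single ordered simplex, exploiting the support hypothesis on the Thom cocycle $T$ together with the good ordering $\co$. For (1), since $T$ is a $2$-cocycle one has $dT=0$, and (\ref{boundary formula}) specialized to $p=2$ yields $\delta(T\overset{\co}\cap \alpha)=T\overset{\co}\cap (\delta\alpha)$ on each simplex, hence on each chain, so $T\overset{\co}{\cap}$ is a chain map.

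For (2), I would fix a simplex $\alpha=[v_0,\ldots,v_k]$ ordered by $\co$ as $v_0<\cdots<v_k$. By (\ref{def:simplicial cap product}) one has $T\overset{\co}\cap \alpha = T([v_0,v_1,v_2])\cdot[v_2,\ldots,v_k]$, so it suffices to show that whenever $T([v_0,v_1,v_2])\ne 0$, the face $[v_2,\ldots,v_k]$ lies in $L_1$. Since $T\in C^2(K,W;\QQ)$ vanishes on every $2$-simplex contained in $W$ (the union of simplexes disjoint from $H_{1,0}$), non-vanishing of $T([v_0,v_1,v_2])$ forces $[v_0,v_1,v_2]\cap H_{1,0}\ne\emptyset$. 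Fullness of $L_1=K\cap H_{1,0}$ in $K$ (Definition \ref{good triangulation} (\ref{faces are full subcomplex})) then implies that this intersection is a simplicial face of $[v_0,v_1,v_2]$, so at least one vertex $v_j$ with $j\in\{0,1,2\}$ lies on $H_{1,0}$. Applying the good-ordering hypothesis to $v_j\leq v_2$ gives $v_2\in H_{1,0}$; for every $l$ with $2\leq l\leq k$, $v_l\geq v_2$ and the good-ordering hypothesis again give $v_l\in H_{1,0}$. A second application of fullness of $L_1$ then shows $[v_2,\ldots,v_k]\in L_1$.

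The passage to the relative version (\ref{cap prod no cond}) is automatic: if $\alpha\subset \bold D^n$, then the face $[v_2,\ldots,v_k]$ is also in $\bold D^n$, and combined with $[v_2,\ldots,v_k]\in L_1$ this gives a chain in $L_1\cap \bold D^n$, which under the identification $H_{1,0}\cong P^{n-1}$ is $\bold D^{n-1}$. The only substantive step is the argument in (2) orchestrating three hypotheses: the support of $T$, fullness of $L_1$, and the good ordering. Since each was introduced precisely to make this matching work, I expect the main difficulty to be careful bookkeeping rather than a genuine conceptual obstacle.
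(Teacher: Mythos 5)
Your proposal is correct and follows essentially the same argument as the paper: part (1) via the boundary formula with $p=2$ even, and part (2) by combining the vanishing of $T$ on simplexes in $W$, the fullness of $L_1=K\cap H_{1,0}$, and the good ordering to force $[v_2,\dots,v_k]\subset H_{1,0}$ whenever $T([v_0,v_1,v_2])\neq 0$. The paper merely organizes the same reasoning as a case split on whether $v_2\in H_{1,0}$, which is the contrapositive of your formulation.
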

\begin{proof}
(1). Since $T$ is a cocycle of even degree, we have
$\delta(T\overset{\co}\cap \sigma)=T\overset{\co}\cap (\delta\sigma)$ for a simplex $\si$.

\noindent (2). If $v_2 \notin   H_{1,0}$, then $[v_0,v_1,v_2] \cap   H_{1,0}=\emptyset$ and
 $T([v_0,v_1,v_2])=0$ since the cochain $T$ vanishes on $W$.
If $v_2 \in  H_{1,0}$, then the vertices $v_2,\cdots, v_k$ are on $  H_{1,0}$, and we have $[v_2,\dots, v_k]\subset  H_{1,0}$
since $  H_{1,0}$ is a full subcomplex of $K$.
Thus the assertion holds.
\end{proof}

\subsubsection{Independence of $T$ and  ordering}
\label{independece of c}
Let $K$ be a good triangulation, and let $L_1$
be the subcomplex $K\cap  H_{1,0}$. 
\begin{proposition}
\label{prop face map first properties}
Let $T\in C^2(K,W;\QQ)$ be a Thom cocycle of the face $H_{1,0}$ and let
 $\gamma$ be an element of $AC_{k}(K,\bold D^n;\QQ)$.
Then we have the following. 
\begin{enumerate}
\item
The chain  $T\overset{\co}{\cap} \gamma$
is an element in $AC_{k-2}(L_1,\bold D^{n-1}; \QQ)$.
\item
\label{indep of T for face map}
The chain $T\overset{\co}{\cap} \gamma$ is
independent of the choice of a 
Thom cocycle $T$ and a good ordering $\co$. 
Thus the map 
$$
T\overset{\co}\cap:
AC_k(K, \bold D^n; 
\QQ) \to
AC_{k-2}(L_1, \bold D^{n-1}; \QQ).
$$
induced by (\ref{cap prod no cond})
 is denoted by $T\cap$.
\item
\label{comatibility for subdivision}
Let $K'$ be a good subdivision of $K$.
We set 
$$
W'=\underset{\substack{\sigma\in K'\\ \sigma'\cap  H_{1,0}=\emptyset}}\cup
\sigma'.
$$
Let $T'\in C^2(K', W';\QQ)$ be a 
Thom cocycle, and $\co'$ a good
ordering of $K'$ with respect to $  H_{1,0}$. 
We set $L'_1=K'\cap  H_{1,0}$.
Then we have the
following commutative diagram
\begin{equation}
\label{commutative diagram for subdivision}
\begin{matrix}
AC_k(K,\bold D^n;\QQ)
&\xrightarrow{T\overset{\cal O}\cap}&
    AC_{k-2}(L_1,\bold D^{n-1}; \QQ) 
\\
\lambda \downarrow & & \downarrow\lambda
\\
AC_k(K',\bold D^n;\QQ)
&\xrightarrow{T'\overset{\cal O'}\cap}&
    AC_{k-2}(L'_1,\bold D^{n-1}; \QQ)
\end{matrix}
 \end{equation}
where the vertical maps $\lambda$ are subdivision operators.
\end{enumerate}
\end{proposition}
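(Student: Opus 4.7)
The plan is to handle the three parts in order, all resting on a single dimension-vanishing principle: for $\gamma\in AC_k(K,\bold D^n;\QQ)$, admissibility gives $\dim(|\gamma|\cap(L_1-\bold D^n))\leq k-2$, so any chain whose support lies in $|\gamma|\cap L_1$ and whose chain degree is at least $k-1$ must actually be supported in $L_1\cap\bold D^n=\bold D^{n-1}$ and hence vanishes in the relative complex $C_\bullet(L_1,\bold D^{n-1};\QQ)$.

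For (1) I would combine the containment $|T\overset{\co}{\cap}\gamma|\subset|\gamma|\cap L_1$ from Proposition \ref{basic property of face map 1}(2) with the observation that every cubical face $H$ of $L_1\cong P^{n-1}$ has the form $L_1\cap H^*$ for a cubical face $H^*$ of $P^n$ satisfying $\codim_{P^n}H^*=1+\codim_{L_1}H$. The admissibility of $\gamma$ then gives
\[
\dim\bigl(|T\overset{\co}{\cap}\gamma|\cap(H-\bold D^{n-1})\bigr)\leq \dim\bigl(|\gamma|\cap(H^*-\bold D^n)\bigr)\leq (k-2)-2\,\codim_{L_1}H.
\]
Since $\delta(T\overset{\co}{\cap}\gamma)=T\overset{\co}{\cap}\delta\gamma$ by Proposition \ref{basic property of face map 1}(1), the same count applied to the admissible chain $\delta\gamma$ handles the boundary.

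For (2), if $T,T'$ are two Thom cocycles then $T-T'=du$ for some $u\in C^1(K,W;\QQ)$, and the boundary formula (\ref{boundary formula}) yields
\[
T\overset{\co}{\cap}\gamma-T'\overset{\co}{\cap}\gamma=\delta(u\overset{\co}{\cap}\gamma)+u\overset{\co}{\cap}\delta\gamma.
\]
The good-ordering hypothesis forces the support of $u\overset{\co}{\cap}\gamma$ to lie in $L_1$ (if $u([v_0,v_1])\neq 0$ then $v_1\in L_1$, whence by good ordering $v_2,\ldots,v_k\in L_1$), and its chain degree $k-1$ exceeds the admissibility bound, so $u\overset{\co}{\cap}\gamma=0$ in $C_{k-1}(L_1,\bold D^{n-1};\QQ)$ by the principle above; the same argument applied to $\delta\gamma$ kills $u\overset{\co}{\cap}\delta\gamma$. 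For independence from the good ordering, I would invoke the standard chain homotopy relating two cap-product maps with different orderings and observe that its terms again have support in $|\gamma|\cap L_1$ in chain degree at least $k-1$, hence vanish for the same reason.

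For (3), by part (2) both $\lambda(T\overset{\co}{\cap}\gamma)$ and $T'\overset{\co'}{\cap}\lambda\gamma$ are intrinsic, so the comparison reduces to a chain-homotopy identity (obtained for instance from the Acyclic Models theorem relating simplicial cap product with subdivision) whose homotopy terms live on $|\lambda\gamma|\cap L'_1\subset|\gamma|\cap L_1$ in chain degree at least $k-1$, vanishing by the principle. The main obstacle I expect is setting up the chain homotopies in (2) and (3) explicitly enough to verify the support and degree bounds on every homotopy term; once that is done, the single dimension-vanishing principle finishes all three parts uniformly.
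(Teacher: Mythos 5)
Your proposal is correct in substance, but it is organized differently from the paper's proof, so let me compare. Part (1) is the same argument as the paper's (support containment $|T\overset{\co}\cap\gamma|\subset|\gamma|\cap H_{1,0}$ plus the codimension shift, and $\delta(T\overset{\co}\cap\gamma)=T\overset{\co}\cap\delta\gamma$). For (2) and (3) the paper does not kill the correction terms at the chain level as you do; instead it passes to relative homology: using admissibility of $\delta\gamma$ it factors the cap product through
$H_{k-2}\bigl(L_1\cap|\gamma|,(L_1\cap|\gamma|)^{(k-3)}\cup\bold D^{n-1}_\ga;\QQ\bigr)\cong\bigoplus_\tau\QQ\,\tau$
(the map (\ref{extract coefficients})), so that the chain $T\overset{\co}\cap\gamma$ is recovered from the homology class of $\gamma$, and then independence of $T$, of $\co$, and compatibility with subdivision are statements about maps on relative homology, proved via the acyclic carrier theorem (Proposition \ref{existence restricted homotopy}, yielding Propositions \ref{app prop indep of good ordering} and \ref{compat with subdivision}) with carrier $\sigma\mapsto C_\bullet(L_1\cap\sigma)$. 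Your route instead uses the explicit coboundary identity from (\ref{boundary formula}) for $T-T'=du$ and chain homotopies for the ordering and subdivision comparisons, and kills every correction term directly by the dimension bound; this is more elementary and avoids the ``extract coefficients'' device, but it shifts all the work onto exactly the point you flag: you must construct the ordering and subdivision homotopies so that $\theta(\sigma)$ is carried by $L_1\cap\sigma$ (respectively $L_1'\cap\sigma$), which is precisely what the paper's acyclic-carrier argument supplies (fullness of $L_1$ making $L_1\cap\sigma$ a face, hence acyclic, and the two degree-two images being homologous $0$-cycles in it). Two small points to tighten: in (3) the term $\theta(\delta\gamma)$ has degree $k-2$, so it is not covered by ``degree at least $k-1$'' and must be killed by the bound $\dim(|\delta\gamma|\cap(H_{1,0}-\bold D^n))\le k-3$, exactly as you did in (2); and to start the subdivision homotopy you need the cocycles on $K$ and $K'$ to match at the bottom level, which the paper arranges by replacing $T$ with the pullback $\la^*T'$ (a Thom cocycle on $(K,W)$) and then invoking part (2) — your ``both sides are intrinsic by (2)'' is the right move but should be made explicit in this form.
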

\begin{proof} (1). 
For an element $z\in C_k (K, \bold D^n;\QQ)$, 
we have $T\overset{\co}{\cap} z\in  C_{k-2} (L_1, \bold D^n;\QQ)$
by Proposition \ref{basic property of face map 1} (2). By the definition of the cap product, we see that 
the set $|T\overset{\co}{\cap} z|\subset |z|\cap  H_{1,0}$. It follows that if  $z$ is admissible i.e. $|z|-\bold D^n$ meets all
the cubical faces properly, then  $|T\overset{\co}{\cap} z|-\bold D^{n-1} $ meets all  the cubical
faces of $  H_{1,0}$ properly.  Similarly, if  the chain  $\delta z$ is admissible, then  $T\overset{\co}{\cap}(\delta  z)$
is admissible in $  H_{1,0}$.  By Proposition \ref{basic property of face map 1} (1) we have the equality $\delta(T\overset{\co}{\cap} z)=
T\overset{\co}{\cap}(\delta  z)$.

(\ref{indep of T for face map}) A proof will be given in Section \ref{subsec orgering}.

(3)  A proof will be given in Section \ref{subsec subdivision}.

 \end{proof}
By taking the inductive limit 
of the homomorphism
$$
T\cap :AC_{\bullet}(K,\bold D^n;\QQ)\to
   AC_{\bullet-2}(L_1,\bold D^{n-1}; \QQ).
$$
for subdivisions,
we get 
a homomorphism
\begin{equation}
 \label{face map single sheaf}
T\cap:\,AC_\bullet(P^n, \bold D^n;\QQ)\to 
AC_{\bullet-2}(P^{n-1}, \bold D^n; \QQ).
\end{equation}

\begin{definition}
\label{face map}
The map $T\cap$ of (\ref{face map single sheaf}) is denoted by $\part_{1,0}$, and called the
face map of the face $  H_{1,0}$.
\end{definition}
Note that By Proposition \ref{prop face map first properties} (2) the map $T\cap$ does not depend on the 
choice of a Thom cocycle of the face $H_{1,0}$. 
By the symmetry, we have maps $\part_{i,\al}$ for $i=1,\cdots,n$ and $\al =0,\infty$.

\subsection{Cubical differentials}
\label{sec:cubical sheaves, cubical differential}
\begin{definition}[Cubical differential]
\label{def face map}
The map 
$\partial$ \index{cubical differential $\partial$} is defined by the equality
\begin{equation}
\label{total face map definition} 
\part =\sum_{i=1}^n (-1)^{i-1}
(\part_{i,0}-\part_{i,\infty}):\,\,
AC_{\bullet}(P^n,\bold D^n;\QQ)\to
AC_{\bullet-2}(P^{n-1},\bold D^n;\QQ).
\end{equation}
\end{definition}

\begin{proposition} 
\label{total face map is a differential}
The composite
$$
\part^2:
AC_{\bullet}(P^n,\bold D^n;\QQ)\to
AC_{\bullet-4}(P^{n-2},\bold D^n;\QQ)
$$
is  the zero map.
\end{proposition}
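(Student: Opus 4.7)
The plan is to reduce $\part^2=0$ to a commutation relation between distinct face operators $\part_{i,\al}$ and $\part_{j,\be}$ for $i\neq j$, and then to verify that commutation via associativity of the simplicial cap product with respect to the Alexander--Whitney cup product.

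First I would carry out the algebraic reduction. Writing $\varepsilon_0=+1$, $\varepsilon_\infty=-1$, and using superscripts to record the ambient $P^k$, the composition expands as
$$
\part\circ\part \;=\; \sum_{i=1}^{n-1}\sum_{j=1}^{n}\sum_{\al,\be\in\{0,\infty\}} (-1)^{i+j}\,\varepsilon_\al\varepsilon_\be\;\part_{i,\al}^{(n-1)}\part_{j,\be}^{(n)}.
$$
By tracking how the remaining coordinates of $P^{n-1}$ are relabeled after one face map, one expects the standard cubical identities
$$
\part_{i,\al}^{(n-1)}\part_{j,\be}^{(n)} = \part_{j-1,\be}^{(n-1)}\part_{i,\al}^{(n)} \ \ (i<j), \qquad \part_{i,\al}^{(n-1)}\part_{j,\be}^{(n)} = \part_{j,\be}^{(n-1)}\part_{i+1,\al}^{(n)} \ \ (i\ge j).
$$
Substituting the first identity into the $i<j$ portion of the sum and reindexing by $(i',j',\al',\be')=(j-1,i,\be,\al)$ matches each such term, up to an overall sign $-1$, with an $i'\ge j'$ term, so all contributions cancel pairwise. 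This step is purely bookkeeping.

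Next I would establish the commutation identities themselves. Fix $i<j$ and $\al,\be$. By Theorem \ref{thm:semi-algebraic triangulation} and Remark \ref{barycentric subdiv is good}, choose one good triangulation $K$ of $P^n$ in which $H_{i,\al}$, $H_{j,\be}$, and their intersection $H_{i,\al}\cap H_{j,\be}$ are all full subcomplexes, and choose an ordering $\co$ of $K$ that is good with respect to both $H_{i,\al}$ and $H_{j,\be}$ (order vertices lying on neither face first, then those on exactly one, then those on the intersection). Select Thom cocycles $T_i\in C^2(K,W_i;\QQ)$ and $T_j\in C^2(K,W_j;\QQ)$ for the two faces. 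By Proposition \ref{prop face map first properties}(\ref{indep of T for face map}) both iterated compositions appearing in the commutation identity may be computed from these common data. The simplicial cap--cup associativity $(u\cup v)\overset{\co}{\cap}\sigma = u\overset{\co}{\cap}(v\overset{\co}{\cap}\sigma)$ then rewrites the two sides as $(T_i\cup T_j)\overset{\co}{\cap}\ga$ and $(T_j\cup T_i)\overset{\co}{\cap}\ga$, both lying in the admissible complex of $H_{i,\al}\cap H_{j,\be}\cong P^{n-2}$.

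It remains to compare $T_i\cup T_j$ and $T_j\cup T_i$. Both are cocycles in $C^4(K,W_i\cup W_j;\QQ)$ representing the same cohomology class (the class of $\delta[(2\pi i)^{-2}(dz_i/z_i)\we(dz_j/z_j)]$, by the standard sign $(-1)^{2\cdot 2}=+1$ for graded commutativity of cup products on cohomology). Writing $T_j\cup T_i - T_i\cup T_j = d\eta$ for some $\eta\in C^3(K,W_i\cup W_j;\QQ)$, the simplicial boundary formula \ref{boundary formula} expresses $(d\eta)\overset{\co}{\cap}\ga$ as $\pm\delta(\eta\overset{\co}{\cap}\ga)\pm\eta\overset{\co}{\cap}(\delta\ga)$. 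The hard part will be exactly this step: promoting the cohomological equality $[T_i\cup T_j]=[T_j\cup T_i]$ to a chain-level equality of admissible cap products. I would handle it by the same dimension-count and deformation-retract argument used in the proof of Proposition \ref{prop face map first properties}(\ref{indep of T for face map}), upgraded from a codimension-one to a codimension-two cubical face; the admissibility hypothesis on both $\ga$ and $\delta\ga$ forces these correction terms to be supported on a subset of too low dimension to contribute, and this is the technical heart of the argument.
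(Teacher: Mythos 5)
Your route is essentially the paper's: after the sign bookkeeping that reduces $\part^2=0$ to the commutation of $\part_{i,\al}$ and $\part_{j,\be}$ for $i\neq j$, you obtain the commutation from the chain-level cup--cap relation (Proposition \ref{cup and cap projection formula for simplicial}), the equality $[T_i\cup T_j]=[T_j\cup T_i]$ in $H^4(K,W;\QQ)$ (with $W$ the union of the simplices missing $H_{i,\al}\cap H_{j,\be}$, cf. Lemma \ref{lem union}), and admissibility to pass from a cohomological identity to an identity of relative chains. Your last step is sound and is a chain-level rendering of what the paper does: writing $T_j\cup T_i-T_i\cup T_j=d\eta$ with $\eta\in C^3(K,W;\QQ)$, the correction terms $\delta(\eta\overset{\co}{\cap}\ga)$ and $\eta\overset{\co}{\cap}\delta\ga$ are carried by $(H_{i,\al}\cap H_{j,\be})\cap|\ga|$ and $(H_{i,\al}\cap H_{j,\be})\cap|\delta\ga|$, whose dimensions off $\bold D^n$ are at most $k-4$ and $k-5$ for $\ga$ of degree $k$, so both vanish in $C_\bullet(\,\cdot\,,\bold D^n;\QQ)$; the paper instead pins the output chain down by its class in the relative homology of $(H_{i,\al}\cap H_{j,\be})\cap|\ga|$ against the $(k-5)$-skeleton of its intersection with $|\delta\ga|$ together with $\bold D^n$, and compares classes there.

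The one step you assert without justification is the existence of a single ordering of $K$ good with respect to both $H_{i,\al}$ and $H_{j,\be}$, and your three-tier recipe does not obviously give one: the middle tier mixes vertices lying on $H_{i,\al}$ only with vertices lying on $H_{j,\be}$ only, and if some simplex contained one vertex of each kind, any total order on that simplex would violate goodness with respect to one of the two faces (Definition \ref{def:good simplical ordering}). In fact no such simplex exists in a good triangulation, but this needs an argument: if $\si$ had vertices $a\in H_{i,\al}\setminus H_{j,\be}$ and $b\in H_{j,\be}\setminus H_{i,\al}$, then $\tau=\si\cap(H_{i,\al}\cup H_{j,\be})$ and $\rho=\si\cap H_{i,\al}\cap H_{j,\be}$ are simplicial faces of $\si$ by condition (3) of Definition \ref{good triangulation}, and $\tau\setminus\rho$ would be the disjoint union of the two nonempty relatively closed subsets $(\tau\cap H_{i,\al})\setminus\rho$ and $(\tau\cap H_{j,\be})\setminus\rho$, contradicting the connectedness of a simplex minus a proper face. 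You should either include this lemma (and then make $H_{i,\al}$-only and $H_{j,\be}$-only vertices incomparable when refining the tiers), or sidestep it as the paper does: compute $\part_{j,\be}\part_{i,\al}$ with an ordering good for $H_{i,\al}$ and the intersection, compute $\part_{i,\al}\part_{j,\be}$ with a second ordering good for $H_{j,\be}$ and the intersection, and compare through the ordering-independence of the homology-level cap product (Proposition \ref{app prop indep of good ordering}); passing to the barycentric subdivision via Proposition \ref{prop face map first properties} (3) is a third option. Also record the small point that $T_j$ restricted to the triangulation of $H_{i,\al}$ is a Thom cocycle for the face $H_{i,\al}\cap H_{j,\be}$ of $H_{i,\al}\cong P^{n-1}$ and that the ordering restricts to a good one there, so that the second cap really computes the second face map; with these points supplied, your argument is complete.
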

A proof of this proposition will be given in Section \ref{subsec cup}.

\subsection{Compatibility of the face map}
If $V$ is a closed subvariety of $P^n$ such that $V\cap \s$  meets all the faces properly, then the set of complex points of $V$ defines
as an element of $AC_{2d}(P^n, \bold D^n;\QQ)$ where $d=\dim V$. For each face $H_{i,\al}$ let $\part_{i,\al}^{alg} V$
be the intersection of the subvariety $V$ and $H_{i,\al}$  considered with multiplicity. The algebraic cycle
 $\part_{i,\al}^{alg} V$ defines an element of $AC_{2d-2}(P^{n-1}, \bold D^{n-1};\QQ)$.  
\begin{proposition}
\label{algtopeq}
Let $V$ and $\part_{i,\al}^{alg} V$ be as above. Then we have an equality
\[\part_{i,\al}^{alg} V=\part_{i,\al}V\]
in $AC_{2d-2}(P^{n-1}, \bold D^{n-1};\QQ)$.  
\end{proposition} 
\begin{proof}
We can and do assume that $H_{i,\al}=H_{1,0}=\{z_1=0\}$. The notation $H_{1,0}$ is abbreviated to $H$
 and $\part_{1,0}$ is abbreviated to $\part_H$. 
Let $V_{sm}$ resp. $V_{sing}$  be the smooth resp. singular locus of $V$. We begin with the case where $H$ meets transversely with $V_{sm}$ and $H$ meets $V_{sing}$ properly.  In this
case we need to show that the multiplicity of $\part_H V$ is 1. By Proposition \ref{prop face map first properties}
(3)  the map $\part_H$ is compatible with subdivisions, so the problem is local. Take a general point $x$ of
$V_{sm}\cap H$. The point $x$ has a neighborhood $V_x$ which is semi-algebraically homeomorphic to the polydisc
$$D^d=D^1\times D^{d-1}=\{(z_1,\cdots, z_d)\in \C^d\,|\, |z_i|\leq 1,\,\,1\leq i\leq d\}$$
and $V_x\cap H=\{z_1=0\}$.  We recall a triangulation of the product of two simplices. Let
$\si=[v_0,\cdots,v_i]$ and $\tau=[w_0,\cdots, w_j]$  be a $i$- resp. $j$- simplex. Consider a sequence $s$ 
of pairs 
\[ (a_0,b_0),\cdots, (a_k,b_k),\cdots, (a_{i+j},b_{i+j})\]
such that $(a_0,b_0)=(0,0)$ and $(a_{k+1},b_{k+1})$ is either $(a_k+1,b_k)$ or$(a_k,b_k+1)$. To the sequence
$s$ we associate a $(i+j)$-simplex $\Delta_s=[(v_{a_0},w_{b_0}),\cdots (v_{a_k},w_{b_k}),\cdots ,(v_{a_{i+j}},w_{b_{i+j}})]$
contained in $\si\times \tau$. Then the space $\si\times \tau$ is triangulated as the sum
$ \sum_{s} \sgn (s) \Delta_s$ where $\sgn (s)$ is the sign of the $(i+1,j+1)$-shuffle associated to $s$.
Let $D^1=\sum_n\si_n$ and $D^{d-1}=\sum_m\tau_m$ be triangulations of $D^1$ and $D^{d-1}$ respectively.
We triangulate each $\si_n\times \tau_m$ as above. This gives a triangulation $K= \sum_{n,m,s} \sgn (s) \Delta_{n,m,s}$
of $D^d$. We assume that for
each $n$, $\si_n=[v_0,v_1,v_2]$ with $\si_n\cap H=\{v_2\}.$ We use  a Thom  form $T_\rho$ defined in  \ref{Thom form}
to calculate the face map $\part_H.$ Let $p_1:\,\,D^d\to D^1$ be the projection to the first factor. We have $T_\rho=
p_1^*(\frac1{2\pi i}d\rho\we \frac{dz_1}{z_1})$.  We choose $\epsilon$ sufficiently small so that the form $T_\rho$
is zero on the simplices which do not meet $H$. Take a good ordering $\cal O$ of $K$. 
For a simplex $\Delta_{n,m,s}=[(v_{a_0},w_{b_0}),\cdots (v_{a_k},w_{b_k}),\cdots ,(v_{a_{2d}},w_{b_{2d}})]$, we have
\[T_\rho\overset{\cal O}\cap \Delta_{n,m,s}=\left(\int_{[(v_{a_0},w_{b_0}), (v_{a_1},w_{b_1}), (v_{a_2},w_{b_2})]}
p_1^*(\frac1{2\pi i}d\rho\we \frac{dz_1}{z_1})\right) [(v_{a_2},w_{b_2}),\cdots, (v_{a_{2d}},w_{b_{2d}})]\]

For each $\si_n=[v_0,v_1,v_2]$ the face $[v_0,v_1]$ does not meet the face $H$. Hence the above integral 
can be nonzero only if 
$$[(v_{a_0},w_{b_0}), (v_{a_1},w_{b_1}), (v_{a_2},w_{b_2})]=[(v_0,w_0), (v_1,w_0), (v_2,w_0)].$$
It follows that
$$\begin{aligned}
\part_H(V_x)&=T_\rho\cap V_x\\
&=\sum_m\left(\int_{D^1}\frac1{2\pi i}d\rho\we \frac{dz_1}{z_1}\right)\tau_m\\
&=\sum_m \left(\int_{|z_1|=1}\frac1{2\pi i} \frac{dz_1}{z_1}\right)\tau_m\\
&=\sum_m\tau_m =D^{d-1}
\end{aligned}$$
and so that the multiplicity of $\part_H V$ at $x$ is one. 
\newline We prove  the general case
by induction on $\dim V$. Let $d$ be an integer greater than 1, and assume that the assertion 
holds when $\dim V=d-1$. We consider the case where $\dim V=d.$ 
Suppose that $\dis \part_H^{alg}V=\sum_D m_D^a D$ where the sum is taken over the
irreducible components of $V\times_{P^n} H.$ Let $S$ be a hyperplane section of $P^n$
which meets $V_{sm}$ and $D_{sm}$ transversely 
for each $D$. Let $K$ be a good triangulation of $P^n$ such
that $V$, $V\cap S$ and $D\cap S$ are subcomplexes of $K$ for each $D$. Then the chain
$\part_H V$ is of the form $\dis \sum_\si m_\si \si$ where the sum is taken over 
$(2d-2)$-simplexes of $K$ contained in some $D$. 
\begin{lemma}
\label{const}
When $\si$ and $\si'$ are $(2d-2)$-simplexes of $K$ contained in the same irreducible 
component $D$, then we have $m_\si=m_{\si'}.$
\end{lemma}
\begin{proof}
\begin{lemma}
\label{exac2}
Let $D$ be an irreducible component of $V\times_{P^n} H$ and $F$ be a $(2d-3)$-
simplex of $K$ contained in $D$. Then $F$ is a common face of exactly two
$(2d-2)$-simplexes $\si$ and $\si'$ contained in $D$.
\end{lemma}
\begin{proof}
By Proposition 2.9.10 of \cite{BCR} the set $F\cap D_{sm}$ is a union of regular 
submanifolds of $D_{sm}.$ Let $x$ be a smooth point of $F\cap D_{sm}$ not contained
in a simplex of $K$ of smaller dimension than $2d-3$. Then there is a neighborhood
$U_x$ of $x$ in $D_{sm}$ and coordinates $(x_1,\cdots, x_{2d-2})$
of $U_x$ as a real manifold, such that $U_x\cap F=\{x_1=0\}.$
The connected components of $U_x-F$ are $\{x_1<0\}$ and $\{x_1>0\}.$
If $F$ is  a common face of $(2d-2)$-simplexes $\si_1,\cdots, \si_k$, then the
connected components of $U_x-F$ are $(\si_1-F)\cap U_x, \cdots, (\si_k-F)
\cap U_x$. It follows that  $k=2.$
\end{proof}
By Lemma \ref{exac2}, if $\si$ and $\si'$ are $(2d-2)$-simplexes in $D$ with
a common $(2d-3)$-face, then we have $m_\si=m_{\si'}$ since 
$\dis \part_H V=  \sum_\si m_\si \si$ is topologically a cycle. For a positive
integer $m$ let $C_m$ be the chain $\dis m \sum \si$
where the sum is taken over $(2d-2)$-simplexes in $D$ such that $m_\si=m$.
We have $\dis D=\sum_m |C_m|$ where the left hand side of the equality is
understood to be the set of complex points of $D$. We need to show that 
$D=|C_m|$ for a single $m$. 
\begin{lemma} 
\label{intsing}
For distinct positive integers $m$ and $m'$  we have $|C_m|\cap |C_{m'}|
\subset D_{sing}.$ 
\end{lemma}
\begin{proof} We will show that if $F$ is a simplex in $D$ contained in more than
one $|C_m|$, then $F\subset D_{sing}.$ We give a proof of this by contradiction.
We can assume that any simplex in $D$ which strictly contains $F$
does not belong to more than one $|C_m|.$ Suppose that $F\cap D_{sm}$
is not empty. By Proposition 2.9.10 \cite{BCR} $F\cap D_{sm}$is a union
of regular submanifolds of $D_{sm}.$ Let $x$ be a smooth point of
$F\cap D_{sm}$ which is not on any face of $K$ strictly smaller than $F$.
Let $\dim F=2d-2-j$. By Lemma \ref{exac2} and its consequence $j$ must be bigger than
1. There is a neighborhood $U_x$ of $x$ in $D_{sm}$ and coordinates
$(x_1,\cdots, x_{2d-2})$ of $U_x$ such that $U_x\cap F=
\{x_1=\cdots =x_j=0\}$. If $F$ belongs to k $|C_m|'$s, then $U_x-F$ is a disjoint 
union of k closed sets $(|C_m|-F)\cap U_x$.  
Since $U_x-F$ is connected k must be one. This is a contradiction. 
\end{proof}
By Lemma \ref{intsing} we see that the set of complex points of $D_{sm}$ is a disjoint union
of closed sets of the form $|C_m|-D_{sing}$. Since $D_{sm}$ is a connected topological space,
it must be equal to $|C_m|-D_{sing}$ for a single $m$. We conclude that $D=|C_m|$.
\end{proof}
By Lemma \ref{const} the chain $\part_H V$ is of the form
$\dis \sum_D m_D D$. We need to show that $m_D=m_D^a$ for each $D$.
We can define the face map $\part_S$ in the same way as we defined
$\part_{i,\al}$. We have the equality 
\[\part_S\part_H V=\sum_D m_D\part_S D.\]
Since $S$ meets $D_{sm}$ transversely for each $D$, the intersection multiplicity 
is one for each $D$ i.e. $\part_S D$ is of the form $\sum \tau$ where the sum
is taken over all $(2d-4)$-simplexes contained in $D\cap S.$ So for a $(2d-4)$-simplex
$\tau$ in $D$ not contained in other irreducible components of $V\times _{P^n} H$,
the coefficient of $\tau$ in the chain $\part_S\part_H V$ is equal to $m_D.$
By Proposition \ref{cup and cap projection formula for simplicial} and the proof of Proposition 
\ref{total face map is a differential} we have the equality
\[\part_S\part_H V=\part_H\part_S V.\]  Since $S$ meets $V_{sm}$
transversely, $\part_S V$ is the chain defined by the set of complex points of
$V\times_{\P^n} S$, which is equal to the chain $\part^{alg}_S V$. By induction
hypothesis we have the equality
\[\part_H^{alg}\part_S^{alg} V=\part_H\part_S V.\] 
The equality
\[\part^{alg}_S\part^{alg}_H V=\part^{alg}_H\part^{alg}_SV \] 
certainly holds, and we have
\[\part^{alg}_S\part^{alg}_H V=\sum_D m_D^a \part_S^{alg} D.\]
As $S$ meets $D_{sm}$ transversely for each $D$, we have 
$\part_S^{alg} D=\part_S D$ for each $D$. It follows that for a $(2d-4)$-simplex 
$\tau$ in $D$ not contained in other irreducible components of $V\times _{P^n} H$,
the coefficient of $\tau$ in the chain $\part^{alg}_S\part^{alg}_H V$ is equal to $m_D^a$.
It follows that $m_D=m_D^a$ for each $D$.

By the above argument We are reduced to the case where $\dim V=1.$ In this case let $x\in V\cap H$, and let $V_x$ be a neighborhood of $x$ in $V$
for the complex topology. The function $z_1$ defines a map $f_{z_1}$ from $V_x$ to $\CC$. 
If we take $V_x$ sufficiently small, then the multiplicity of $\part_H^{alg} V$ at $x$ is equal to the degree of the map $f_{z_1}$
which we denote $\deg f_{z_1}$.  We can assume that $f_{z_1}^{-1}(0)=x$.
Let $\epsilon$ be sufficiently small so that 
$$B_{\epsilon}:=\{z\in \C\,|\,|z|\leq \epsilon\}\subset f_{z_1}(V_x)$$
and let $B_\epsilon =\sum_j\si_j$ be a triangulation of $B_\epsilon$ such that $0$ is a vertex. 
 Since the map $f_{z_1}$ is a local isomorphism on $V_x-\{x\}$,  the sum
$\sum_jf_{z_1}^{-1}(\si_j)$ defines a triangulation of $f_{z_1}^{-1}(B_{\epsilon}).$ The chain $\part_H(V_x)$ is calculated
as
\[\sum_j\int_{f_{z_1}^{-1}(\si_j)}f_{z_1}^*(\frac1{2\pi i}d\rho\we \frac{d z}{z})=\deg f_{z_1} \int_{|z|=\epsilon}
 \frac1{2\pi i}\frac{d z}{z}=\deg f_{z_1}.\]

\end{proof}

\section{The Generalized Cauchy formula}
\label{sec:Generalized Caychy formula}

\subsection{Statement of the generalized Cauchy formula}

Let $K$ be a good triangulation of $P^n$. 
We define the rational differential form $\omega_n$ on $P^n$ by
$$\omega_n =\frac{1}{(2\pi i)^n}\frac{dz_1}{z_1}\wedge
\cdots \we\frac{dz_n}{z_n}.
$$ 
\index{$\omega_n$}
By applying  the Main Theorem 4.4 of \cite{part I} to the case where $A=\si$, $m=\dim \si=n$ and $\omega=\omega_n$
we have the  following theorem.
 \begin{theorem} 
\label{convadmiss}
Let $\si$ be an admissible $n$-simplex of $K$.  
The integral 
\begin{equation}
\label{convergenct integral1}
\int_\sigma \omega_n
\end{equation}
converges absolutely.

\end{theorem}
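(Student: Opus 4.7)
The statement is framed as a direct application of Theorem 4.4 of \cite{part I}, so my plan is to explain how the data of the present setting fit the hypotheses of that theorem, and to indicate where the substantive work is hidden in Part I.

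I would take $A = \sigma$, which is a compact semi-algebraic subset of $P^n$ of real dimension $m = n$ (being a top-dimensional simplex in the good triangulation $K$). The form $\omega = \omega_n$ is a rational $n$-form on $P^n$ whose polar divisor is the normal crossings divisor $\bold H^n = \bigcup_{i,\alpha} H_{i,\alpha}$, with simple logarithmic pole along each component $H_{i,\alpha}$; this is the class of logarithmic forms treated in Part I.

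The key hypothesis of Theorem 4.4 of \cite{part I} is the dimension bound on how $A$ can approach the polar locus, which is exactly the admissibility condition of Definition \ref{def:semi-alg current}: for every cubical face $H$,
$$\dim(\sigma \cap (H - \bold D^n)) \leq \dim \sigma - 2\operatorname{codim} H = n - 2\operatorname{codim} H.$$
The factor $2$ on the right matches the real codimension of $H$ in $P^n$ (each $H_{i,\alpha}$ has complex codimension equal to half its real codimension), and this is precisely the sharp bound needed to absorb the logarithmic blow-up of $\omega_n$ near the deeper strata of $\bold H^n$. With the identifications $A=\sigma$, $m=n$, $\omega=\omega_n$, the hypotheses of Theorem 4.4 are met, and absolute convergence of $\int_\sigma \omega_n$ follows immediately.

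The real obstacle lies not in the present reduction but inside Part I itself: one must stratify $\sigma$ compatibly with the normal crossings stratification of $(P^n, \bold H^n)$, reduce to a local model near each stratum by suitable real-analytic coordinates, and then carry out polar-coordinate estimates of the type $\int_0^{\epsilon} |\log r|^{k}\, r^{a-1}\, dr < \infty$ (for suitable $a>0$) along the normal directions, with the admissibility dimension bound providing the required positivity of the exponent $a$ stratum by stratum. A partition of unity on the compact set $\sigma$ then patches the local estimates into the global absolute convergence statement. Granting those estimates from Part I, the present theorem is an unpacking of definitions.
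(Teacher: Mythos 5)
Your proposal matches the paper's own treatment: Theorem \ref{convadmiss} is obtained there exactly by invoking Theorem 4.4 of \cite{part I} with $A=\sigma$, $m=\dim\sigma=n$ and $\omega=\omega_n$, the admissibility condition of Definition \ref{def:semi-alg current} supplying the dimension hypothesis of that theorem. Your additional sketch of the internal estimates is extra commentary on Part I rather than part of the reduction, and the reduction itself is the same as in the paper.
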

Using Theorem \ref{convadmiss}, the following is well defined.
\begin{definition}
Let $\gamma$  
be an element in $AC_n(K,\bold D^n;\QQ)$, and let $\sum_{\sigma}a_\si \sigma$
be a representative of $\gamma$ in $C_n(K;\QQ)$.
We define $I_n(\gamma)\in \CC$
by\index{$I_n$}
\begin{equation}
\label{def: def of I}
I_n(\gamma)=(-1)^{\frac{n(n-1)}2}\sum_{\sigma}\int_{\sigma}a_\sigma\omega_n
\end{equation}
where the sum is taken over the $n$-simplexes $\si$ not contained in $\bold D^n$.
The right hand side of (\ref{def: def of I}) does not depend on the choice of a representative of $\gamma$.
The map $I_n$ is compatible with subdivisions of triangulations, and we obtain a map
$$
I_n:AC_n(P^n,\bold D^n;\QQ)\to \CC.
$$
\end{definition}
In this section, we prove the following theorem.
\begin{theorem}[Generalized Cauchy formula]
\label{th:generalized Cauchy formula}
\index{generalized Cauchy formula}
Let 
$\ga$ be an element in
 $AC_{n+1}(P^n,\bold D^n;\QQ)$. 
Then we have the equality
\begin{equation}
 \label{eq:cauchy formula} 
I_{n-1}(\part \gamma)+(-1)^nI_n(\delta \gamma)=0.
\end{equation}
\end{theorem}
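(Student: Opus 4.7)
The plan is to prove the formula for a single admissible $(n+1)$-simplex and sum by linearity. Fix a representative $\ga=\sum a_\si\si$ with $\si\in K_{n+1}$ and, for small $\eta>0$, define the ``punctured simplex'' $\si_\eta = \si\setminus \bigcup_{i,\al} U_{i,\al}(\eta)$, where $U_{i,\al}(\eta)$ is a semi-algebraic tubular neighborhood of $H_{i,\al}$ (take $\{|z_i|<\eta\}$ for $\al=0$ and $\{|z_i|>1/\eta\}$ for $\al=\infty$). On $\si_\eta$ the form $\omega_n$ is smooth and closed, so Stokes gives $\int_{\delta\si_\eta}\omega_n = 0$. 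The boundary $\delta\si_\eta$ decomposes into an outer part $(\delta\si)\setminus\bigcup U_{i,\al}(\eta)$ and inner tube pieces $-\sum_{i,\al} \si\cap \partial U_{i,\al}(\eta)$ (the minus arising from the outward orientation of $\si_\eta$). As $\eta\to 0$, the outer integral converges to $\int_{\delta\si}\omega_n$ by the absolute-convergence Theorem \ref{convadmiss} applied to the admissible chain $\delta\si$ (admissibility is guaranteed since $\ga\in AC_{n+1}$).

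For the tube integrals I would use the factorization
\begin{equation*}
\omega_n = (-1)^{i-1}\,\frac{1}{2\pi i}\,\frac{dz_i}{z_i}\wedge \omega_{n-1}^{(i)},
\end{equation*}
where $\omega_{n-1}^{(i)}$ is obtained from $\omega_n$ by deleting the $i$-th logarithmic factor and pulling back to $H_{i,\al}$; in polar coordinates $z_i = re^{i\theta}$ near $\al=0$ (respectively $1/z_i = re^{i\theta}$ near $\al=\infty$) an application of Fubini reduces the tube integral to an iterated integral. The inner $\theta$-integration produces a residue factor $+1$ when $\al=0$ and $-1$ when $\al=\infty$, and the outer integration becomes the integral of $\omega_{n-1}^{(i)}$ over the geometric intersection $\si\cap H_{i,\al}$, which is absolutely convergent by Theorem \ref{convadmiss} applied in dimension $n-1$. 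To identify this geometric intersection with the simplicial face $\part_{i,\al}\si$ of Definition \ref{face map}, I would choose the Thom form $T_\rho$ of Definition \ref{def Thom form} as the Thom cocycle representative, which is permissible by the independence statement in Proposition \ref{prop face map first properties}(\ref{indep of T for face map}); in the limit $\eta\to 0$, $T_\rho\cap\si$ computes precisely the transverse slice. Summing all tube contributions, combining with the decomposition $\part\si = \sum_i(-1)^{i-1}(\part_{i,0}\si - \part_{i,\infty}\si)$, and absorbing the normalization constants $(-1)^{n(n-1)/2}$ and $(-1)^{(n-1)(n-2)/2}$ from the definitions of $I_n$ and $I_{n-1}$ (whose product is $(-1)^{(n-1)^2} = (-1)^{n-1}$) rearranges the Stokes identity into $I_{n-1}(\part\si) + (-1)^n I_n(\delta\si) = 0$.

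The main obstacle I anticipate is the analytic control of the ``corner'' regions where two or more tubes $U_{i,\al}(\eta)$ and $U_{j,\be}(\eta)$ overlap, or where a tube meets $\bold D^n$. Along such corners $\omega_n$ exhibits simultaneous logarithmic singularities from several factors, so one must verify that the integral over the corner regions is $o(1)$ as $\eta\to 0$. The admissibility bound $\dim(\si\cap H_{i,\al}\cap H_{j,\be})\leq (n+1)-4$ from Definition \ref{def:semi-alg current} supplies exactly the dimensional slack needed to beat the compound logarithmic blow-up, and the absolute-convergence machinery of \cite{part I} furnishes the dominated convergence required to exchange limits and integrals on both the outer boundary and the tube slices. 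The remaining work is careful sign bookkeeping across the four sources -- the boundary orientation of $\si_\eta$, the factor $(-1)^{i-1}$ from the $dz_i$-rearrangement, the residue sign distinguishing $\al=0$ from $\al=\infty$, and the prefactors in $I_n$ and $I_{n-1}$ -- which combine to yield precisely (\ref{eq:cauchy formula}).
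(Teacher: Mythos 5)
Your plan to ``prove the formula for a single admissible $(n+1)$-simplex and sum by linearity'' breaks down at the outset, because the statement is not a simplex-by-simplex identity. If $\si$ is a single simplex occurring in $\ga$, then $\si$ is admissible but $\delta\si$ in general is not: $\delta\si$ can contain $n$-faces $\nu$ with $\dim(\nu\cap H_{i,\al})=n-1$, for which $\int_\nu\omega_n$ need not converge, so the term $I_n(\delta\si)$ is not even defined (Theorem \ref{convadmiss} only applies to admissible simplexes). For the same reason $\part\si$ is not well defined for a single simplex: the cap product $T\overset{\co}{\cap}\si$ genuinely depends on the choice of Thom cocycle $T$, and the independence statement Proposition \ref{prop face map first properties}(\ref{indep of T for face map}) requires $\delta\ga$ to be admissible. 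The admissibility of $\delta\ga$, i.e.\ the cancellation in $\delta\ga$ of the coefficients of the non-admissible $n$-faces, is exactly what the paper's argument exploits: in Lemma \ref{independence} this cancellation (via a Stokes argument on the 2-dimensional fibers $[v_0,v_1,t]$) shows that the fiber integral $\sum_\si a_\si\int_{[v_0,v_1,t]}d\rho_\e\we\omega_1$ is independent of $t\in\tau$, which is what allows the tube contribution to be identified with the Thom-cocycle value on the front face, i.e.\ with $\part\ga$. Relatedly, your claim that the inner $\theta$-integration ``produces a residue factor $+1$'' presupposes that the link of $\si$ around $H_{i,\al}$ winds exactly once; in fact the multiplicity is $T(\si_f)\in\QQ$, it varies with the fiber point for an individual simplex, and only after summing over the chain (using admissibility of $\delta\ga$) does it become a well-defined coefficient. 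So the key missing idea is that the whole argument must be run at the level of the admissible chain $\ga$, after the paper's preliminary reductions: splitting off the part with $\si\cap\bold H^n\subset\bold D^n$ (Proposition \ref{cauchy: dn case}), cutting away a neighborhood of the higher-codimension locus $\bold H_h$ by a limit argument (Proposition \ref{cauchy not contained in D}), and then localizing along a fixed codimension-one face to chains $\ga^{(\tau)}$, whose admissibility is itself a nontrivial chain-level fact (Proposition \ref{proper meeting}).

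A second, more technical gap is the Fubini step. The slice $\si\cap\{z_i=\zeta\}$ is not a product of a two-dimensional disc with $\si\cap H_{i,\al}$, so one cannot directly factor the tube integral into a residue times $\int_{\si\cap H_{i,\al}}\omega_{n-1}$. The paper handles this by comparing $\omega_{n-1}$ with its pullback $\pi_\si^*i^*\omega_{n-1}$ under the linear projection $\pi_\si:\si^\circ\to\tau$ and proving the error term vanishes in the limit (Proposition \ref{difference}, resting on Lemma \ref{lemma:bounding after integral} and the convergence results of Part I), after which the projection formula (Proposition \ref{nearly fiber 2}) applies. Your appeal to ``admissibility beating the compound logarithmic blow-up'' at corners is in the right spirit for the $\bold H_h$-part (this is Proposition \ref{prop:Contributions from higher codimension 4.7} in the paper), but it does not address either the per-simplex ill-posedness above or the non-product geometry of the slices, both of which require the chain-level machinery the paper builds.
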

Let 
$\ga=\sum a_\si \si$ be an element of $AC_{n+1}(K,\bold D^n;\QQ)$ 
for a good triangulation $K$.
By setting 
$\part \ga=\sum c_\tau\tau $ and $\delta\ga=\sum  b_\nu\nu$, 
the equality (\ref{eq:cauchy formula}) can be written as
\begin{equation}
\label{explicit equation}
\sum_\nu\int_\nu b_\nu  \omega_n=
\sum_\tau\int_\tau c_\tau \omega_{n-1}.
\end{equation}
Here  it is understood that  the integral of $\omega_n$ on a simplex contained in $\bold D^n$
is zero by definition.

\begin{remark}
For a $n$-simplex $\nu$ contained in $\bold D^n$, the integral $\int_\nu\omega_n$ is defined to be zero.
This will be justified as follows. Let $\si$ be an admissible $(n+1)$-simplex of a good triangulation $K$.
If $\si\subset \bold D^n$, then $\part \si$ resp.  $\delta \si$ is contained in 
$\bold D^{n-1}$ resp. $\bold D^n$, and $\si$ is irrelevant to the problem. 
Suppose that $\si\not\subset \bold D^n$ and let $\nu$ be an $n$-simplex contained in $\si\cap \bold D^n$. If $\nu\not\subset \bold H^{n}$, then
the  integral $\int_\nu\omega_n$ should be zero since the restriction of $\omega_n$ to $\bold D^n-\bold H^{n}$
is zero. The critical case is that $\nu\subset \bold H^{n}$. In this case we have $\nu=\si\cap \bold H^{n}$
since $K$ is a good triangulation, $\si$ is admissible and $\si\not\subset \bold D^n$. This case will be considered in Proposition \ref{cauchy: dn case},
and it will be clear that our definition is correct.
\end{remark}
\subsection{Outline of the proof of Theorem \ref{th:generalized Cauchy formula}}

Let $\gamma$ be an element in $AC_{n+1}(K,\bold D^n;\QQ)$ and
$\sum_{\sigma} a_\sigma\sigma$
be  a representative of $\ga$ in $C_{n+1}(K;\QQ)$.
We define elements $\gamma_{\bold D}$ and $\gamma_{\bold D^c}$
in 
$C_{n+1}(K,\bold D;\QQ)$ by
\begin{align}
\label{def def of localization w.r.t. D}
\gamma_{\bold D}=&\sum_{\sigma\cap \bold H^n\subset \bold D^n}
a_\si \si,
\\
\nonumber
\ga_{\bold D^c}=&\gamma-\gamma_{\bold D}.
\end{align}
Then 
$\gamma_{\bold D}$ is an element in 
$AC_{n+1}(K,\bold D^n;\QQ)$, 
and as a consequence 
$\gamma_{\bold D^c}$ is also an element in 
$AC_{n+1}(K,\bold D^n;\QQ)$.
Theorem \ref{th:generalized Cauchy formula} is a consequence of the following
Proposition \ref{cauchy: dn case} and Proposition \ref{cauchy not contained in D}.
\begin{proposition}
\label{cauchy: dn case}
Let 
$\ga$ be an element in 
$AC_{n+1}(K,\bold D^n;\QQ)$.
Then 
we have $I_n(\delta\gamma_{\bold D})=0$. 
\end{proposition}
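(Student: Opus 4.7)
The plan is to apply Stokes' theorem simplex-by-simplex, exploiting that $\omega_n$ is closed off $\bold H^n$ and that each $\sigma$ in the support of $\gamma_{\bold D}$ meets $\bold H^n$ only inside $\bold D^n$. Writing $\gamma_{\bold D} = \sum_\sigma a_\sigma \sigma$ with the sum ranging over $(n+1)$-simplices of $K$ satisfying $\sigma \cap \bold H^n \subset \bold D^n$, linearity of $I_n$ reduces the problem to showing that
\[
I_n(\delta \sigma) \;=\; (-1)^{n(n-1)/2}\sum_{\nu \prec \sigma,\; \nu \not\subset \bold D^n} c_\nu \int_\nu \omega_n \;=\; 0
\]
for each such $\sigma$, where $\nu$ runs over the codimension-one faces of $\sigma$ and $c_\nu \in \{\pm 1\}$ are the boundary incidence signs; faces contained in $\bold D^n$ are dropped by the convention $\int_\nu \omega_n = 0$, which is precisely what the proposition is meant to justify.

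Fix such a $\sigma$ and, for small $\varepsilon > 0$, let $U_\varepsilon \subset \sigma$ be a closed tubular neighborhood of $\sigma \cap \bold D^n$ (for instance $U_\varepsilon = \{x \in \sigma : \min_i |z_i(x) - 1| \leq \varepsilon\}$). Since $\sigma \cap \bold H^n \subset \bold D^n$, the region $\sigma \setminus U_\varepsilon$ is disjoint from $\bold H^n$, so $\omega_n$ is smooth and closed there. Stokes' theorem on the piecewise smooth manifold-with-corners $\sigma \setminus U_\varepsilon$ then yields
\[
0 \;=\; \int_{\sigma \setminus U_\varepsilon} d\omega_n \;=\; \int_{\delta\sigma \setminus U_\varepsilon} \omega_n \;-\; \int_{\sigma \cap \partial U_\varepsilon} \omega_n.
\]
As $\varepsilon \to 0$, the faces of $\delta\sigma$ contained in $\bold D^n$ are eventually absorbed into $U_\varepsilon$ and contribute nothing, while on each remaining face $\nu$ the integral $\int_{\nu \setminus U_\varepsilon}\omega_n$ converges to $\int_\nu \omega_n$ by the absolute convergence of Theorem \ref{convadmiss} together with dominated convergence. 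Thus the first integral on the right tends to $\sum_\nu c_\nu \int_\nu \omega_n$, and it suffices to show the second integral tends to zero.

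The main obstacle is therefore to control the boundary contribution $\int_{\sigma \cap \partial U_\varepsilon} \omega_n$ as $\varepsilon \to 0$. Away from a shrinking neighborhood of the codimension-two locus $\bold D^n \cap \bold H^n$ inside $\sigma \cap \partial U_\varepsilon$, the restriction of $\omega_n$ is uniformly of order $\varepsilon$, because $\omega_n$ vanishes on the smooth part of $\bold D^n$: the factor $d\log z_i$ restricts to zero on $\{z_i = 1\}$. Near $\bold D^n \cap \bold H^n$, however, the form acquires logarithmic singularities that compete with this vanishing; the admissibility of $\sigma$ constrains the dimension of $\sigma \cap \bold D^n \cap \bold H^n$, and the pointwise estimates for logarithmic forms on admissible chains developed in Part~I (the same machinery that underlies Theorem \ref{convadmiss}) are precisely what is required to show that this residual contribution also tends to zero. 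Granted those estimates, the proposition follows.
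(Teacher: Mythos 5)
Your overall strategy (excise a neighborhood of the singular locus, apply Stokes where $\omega_n$ is smooth, then let the cutoff shrink) is the same as the paper's, but the excision you chose and, more importantly, the justification of the key limit are where the proposal breaks down. The entire analytic content of Proposition \ref{cauchy: dn case} is the vanishing of the tube contribution, and you do not prove it: you end with ``granted those estimates, the proposition follows.'' Moreover, the two heuristics you offer for it are not sound. First, on the part of $\partial U_\varepsilon$ away from $\bold H^n$, the restriction of $\omega_n$ is \emph{not} pointwise of order $\varepsilon$: on a sheet $\{|z_i-1|=\varepsilon\}$ the comass of $dz_i/z_i$ is of order $1$ (it is the circle that is short, not the form that is small), so the smallness must come from a Fubini/slice-volume argument with uniform control of fiber integrals, exactly the kind of statement that needs the quantitative machinery of Part I and is not automatic. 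Second, and more seriously, your appeal to admissibility near $\bold D^n\cap\bold H^n$ is misplaced: the admissibility condition only bounds $\dim\bigl(\sigma\cap(H-\bold D^n)\bigr)$, and for a simplex with $\sigma\cap\bold H^n\subset\bold D^n$ it is satisfied vacuously, so it gives \emph{no} constraint on $\sigma\cap\bold D^n\cap\bold H^n$ (which can be of dimension up to $n$). So there is no dimension count available there, and the logarithmic contribution near $\bold D^n\cap\bold H^n$ is left without any argument.

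The paper avoids both problems by choosing the cutoff differently. Since the only singularities of $\omega_n$ met by such a $\sigma$ lie on the single cubical face $H_{i,\alpha}\supset\sigma\cap\bold H^n$ (say $H_{1,0}$), one excises $\{|z_1|\le\epsilon\}$ rather than a tube around all of $\bold D^n$ (the form is smooth across $\bold D^n-\bold H^n$ and restricts to zero there, so excising near the smooth part of $\bold D^n$ only creates extra boundary terms, precisely the ones your ``order $\varepsilon$'' step has to fight). After subdividing so that $\{|z_1|\ge\epsilon\}$ is a subcomplex (with a genericity condition on $\epsilon$ that your ``manifold-with-corners'' phrasing also glosses over), Stokes gives a boundary term supported on $\sigma\cap\{|z_1|=\epsilon\}$, and this is killed by the quotable result Proposition \ref{prop:Contributions from higher codimension 4.8} (Theorem 4.8 of Part I), whose hypothesis is exactly $\sigma\cap\bold H^n\subset\bold D^n$ and whose conclusion is $\lim_{t\to0}\int_{\sigma\cap\{|z_1|=t\}}|\omega_n|=0$; the remaining faces converge by Theorem \ref{convadmiss} and dominated convergence, as in your argument. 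To repair your version you would have to prove the analogous statement $\lim_{\varepsilon\to0}\int_{\sigma\cap\{\min_i|z_i-1|=\varepsilon\}}|\omega_n|=0$ from scratch, for which no result is quoted in this paper; as written, the proposal has a genuine gap at its central step.
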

Since $\part\ga_{\bold D}=0$ by definition, Theorem \ref{th:generalized Cauchy formula} holds
for $\ga_{\bold D}$.
The proof of Proposition \ref{cauchy: dn case}
is given in \S \ref{subsubsection:contained in D}.
\begin{proposition}
\label{cauchy not contained in D}
Let $\gamma$ be an element in $AC_{n+1}(K,\bold D^n;\QQ)$. Suppose that $\ga$
has a representative  $\sum_{\sigma}a_\si \sigma$
 in $C_{n+1}(K;\QQ)$ such that $a_{\sigma}=0$ if $\si\cap \bold H^n\subset\bold D^n$. 
Then Theorem \ref{th:generalized Cauchy formula} holds for $\gamma$.
\end{proposition}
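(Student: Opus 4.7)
The plan is to reduce the identity (\ref{eq:cauchy formula}) to Stokes's theorem on the domain obtained by excising small tubular neighborhoods of $\bold H^n$ from $\gamma$, and to identify the resulting boundary contributions with $I_{n-1}(\partial\gamma)$ via the analytic (Thom form) description of the face maps.

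First, by Proposition \ref{prop face map first properties}(\ref{indep of T for face map}), each face map $\partial_{i,\alpha}$ may be computed using a Thom form $T_{\rho_{i,\alpha}}$ as in Section \ref{Thom form}, with $\rho_{i,\alpha}$ a smooth cutoff that vanishes near $H_{i,\alpha}$ and equals $1$ outside a small tube. For concreteness I describe the argument near $H_{1,0}$; the other $2n$ faces are handled symmetrically.

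Second, for a small parameter $\epsilon>0$ let $U^{\epsilon}_{i,\alpha}$ denote the open $\epsilon$-tube around $H_{i,\alpha}$, and set
$$
\gamma^{\epsilon}=\gamma\setminus\bigcup_{i,\alpha} U^{\epsilon}_{i,\alpha}.
$$
Since $\omega_n$ is smooth and closed off $\bold H^n$ and $\gamma^{\epsilon}$ avoids $\bold H^n$, Stokes's theorem yields $\int_{\delta\gamma^{\epsilon}}\omega_n=0$. The boundary $\delta\gamma^{\epsilon}$ decomposes into an \emph{outer} part $\delta\gamma\cap\gamma^{\epsilon}$ and \emph{inner} parts $-\gamma\cap\partial U^{\epsilon}_{i,\alpha}$ with the appropriate induced orientation.

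Third, I take the limit $\epsilon\to 0$. The outer piece converges to $(-1)^{n(n-1)/2}I_n(\delta\gamma)$, by dominated convergence using the absolute integrability provided by Theorem \ref{convadmiss} applied to $\delta\gamma$, which is admissible by hypothesis. For each inner piece, the parametrization $z_i=\alpha+\epsilon e^{i\theta}$ (and the analogous one near $\infty$) turns $\frac{dz_i}{2\pi i\,z_i}$ into the uniform measure $\frac{d\theta}{2\pi}$ in the limit, and an application of Fubini over the tube converts the tube integral into an integral of $\omega_{n-1}$ over $\gamma\cap H_{i,\alpha}$. The hypothesis that $a_{\sigma}=0$ whenever $\sigma\cap\bold H^n\subset\bold D^n$ ensures that each such intersection contributes a genuine $(n-1)$-chain away from $\bold D^n$ over which $\omega_{n-1}$ can be integrated. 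Tracking signs (the alternating $(-1)^{i-1}$ from Definition \ref{def face map}, the orientation sign of the inner boundary, and the prefactor $(-1)^{n(n-1)/2}$) collects the inner contributions into $(-1)^{n-1}I_{n-1}(\partial\gamma)$, yielding (\ref{eq:cauchy formula}) after dividing by the common sign.

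The main obstacle is the third step: precisely matching the analytic tube-integral limit with the simplicial cap product description of the face map. One must verify that the cycle $T_{\rho_{i,\alpha}}\cap\gamma$, defined via the analytic Thom cocycle, geometrically represents the intersection $\gamma\cap H_{i,\alpha}$ with the multiplicities dictated by the face map, and that
$$
\lim_{\epsilon\to 0}\int_{\gamma\cap\partial U^{\epsilon}_{i,\alpha}}\omega_n=\int_{T_{\rho_{i,\alpha}}\cap\gamma}\omega_{n-1}.
$$
The needed uniform control near the higher-codimension strata $H_{i,\alpha}\cap H_{j,\beta}$ (where the tubes overlap and $\omega_{n-1}$ itself has logarithmic singularities) is delicate and will be controlled by the admissibility conditions on $\gamma$ and $\delta\gamma$ combined with the estimates from \cite{part I} used to prove Theorem \ref{convadmiss}; these guarantee that the overlap regions contribute $o(1)$ as $\epsilon\to 0$ and that the interchange of limits leading to the Fubini identification above is legitimate.
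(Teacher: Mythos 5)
Your overall plan (cut out small neighborhoods of the singular locus, apply Stokes, and recover $I_{n-1}(\partial\gamma)$ from the newly created boundary in the limit $\epsilon\to 0$) is the same general strategy as the paper's, and the easy halves of it are fine: the outer boundary term does converge to $I_n(\delta\gamma)$ by Theorem \ref{convadmiss} and dominated convergence, and the overlap regions near $\bold H_h$ can indeed be killed by the estimate of Part I (this is exactly Proposition \ref{prop:Contributions from higher codimension 4.7}; the paper in fact organizes this more cleanly by first excising only a neighborhood of $\bold H_h$, reducing to the codimension-one situation of Proposition \ref{prop:Cauchy formula for codimension one}, which removes the tube-overlap issue you worry about).

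The genuine gap is the step you yourself flag as ``the main obstacle'' and then dispose of by assertion: the identification
$\lim_{\epsilon\to 0}\int_{\gamma\cap\partial U^{\epsilon}_{i,\alpha}}\omega_n=\int_{\partial_{i,\alpha}\gamma}\omega_{n-1}$,
where $\partial_{i,\alpha}\gamma$ is the simplicial cap product with a Thom cocycle. This is the heart of the proposition and it does not follow from ``Fubini over the tube'' plus the convergence estimates of Part I. Two things are missing. First, $|\gamma|\cap\partial U^{\epsilon}_{i,\alpha}$ is not a product over $|\gamma|\cap H_{i,\alpha}$, so before any Fubini one must replace $\omega_{n-1}$ by its pullback under a projection onto the face simplices and show the error tends to $0$; this is the content of Proposition \ref{difference} (via Lemma \ref{lemma:bounding after integral}), which uses the facewise regularity and the linear projections $\pi_{\sigma,\tau}$, not merely absolute convergence. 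Second, and more seriously, the multiplicity with which the tube integral localizes onto a given $(n-1)$-simplex $\tau\subset H_{i,\alpha}$ is a winding number of the link of $\tau$ inside $\gamma$, computed on the fiber over a point $t\in\tau$, and one must show it is independent of $t$ and equal to the Thom-cocycle coefficient $\int_{\sigma_f}d\rho_\epsilon\wedge\omega_1$ appearing in $T\cap\gamma$. This is exactly Lemma \ref{independence}, whose proof uses the admissibility of $\delta\gamma$ in an essential combinatorial way: any $n$-simplex $\nu$ with $\dim(\nu\cap H_{i,\alpha})=n-1$ is non-admissible, hence has coefficient $0$ in $\delta\gamma$, and this cancellation is what makes $\sum_\sigma a_\sigma\int_{[v_0,v_1,t]}d\rho_\epsilon\wedge\omega_1$ independent of $t$. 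Your proposal never produces this argument (nor the decomposition of $\gamma$ into the pieces $\gamma^{(\tau)}$, Proposition \ref{proper meeting}, needed to even state it), and the cited inputs (admissibility ``combined with the estimates from \cite{part I}'') cannot supply it, since those estimates control sizes of integrals but not the simplicial multiplicities. Without this, the equality of the tube limit with $I_{n-1}(\partial\gamma)$ --- with $\partial$ as actually defined in \S\ref{sec:Face map and cubical chain} --- remains unproved.
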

Note that if $\ga$ has a representative which satisfies the above condition, then it is  uniquely determined.  Therefore we denote $|\sum_{\sigma}a_\si \sigma|$
by $|\ga|$. 
\newline In the above notation the chain $\ga_{\bold D^c}$ satisfies the assumtion of Proposition 
\ref{cauchy not contained in D}.  If the assertion of Theorem \ref{th:generalized Cauchy formula} holds
for $\ga_{\bold D}$ and $\ga_{\bold D^c}$, then it also holds for $\ga=\ga_{\bold D}+\ga_{\bold D^c}$
since the maps $I_n$ are additive for the addition of chains.

Let $\{H_{i,\alpha}\}$ be the set of codimension one cubical
faces defined in \S \ref{subsec:def of admissible chain}. 
We define $\bold H_h$ as the union of
higher codimensional cubical faces, i.e.
$$
\bold H_h=\underset{\substack{1\leq i< i'\leq n,\\ \alpha\in \{0,\infty\},\,\beta\in \{0,\infty\}}}
\bigcup  (H_{i,\alpha}\cap H_{i',\beta}).
$$
In \S \ref{sec:Cauchy formula for condimension one face},
we prove the following theorem.
\begin{proposition}
[Generalized Cauchy formula for codimension one face]
\label{prop:Cauchy formula for codimension one}
Let $\gamma$ 
be an element in $AC_{n+1}(K,\bold D^n;\QQ)$. Suppose that $\ga$ has the representative $\ga=\sum_{\sigma}a_\si \sigma$ 
 in $C_{n+1}(K;\QQ)$ such that (1) $a_\sigma=0$ if $\sigma\cap \bold H^n \subset \bold D^n$ and 
(2)  $|\ga|\cap \bold H_h=\emptyset$.
Then  Theorem \ref{th:generalized Cauchy formula} holds
for $\ga$.
\end{proposition}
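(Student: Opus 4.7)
The plan is to apply Stokes' theorem on the complement of shrinking tubular neighborhoods of the polar locus of $\omega_n$ inside $|\ga|$, and then convert the residue contributions from the boundaries of these tubes into the face-map terms $I_{n-1}(\part_{i,\al}\ga)$ via the classical Cauchy formula. The hypothesis $|\ga|\cap\bold H_h=\emptyset$ is what makes this feasible: on a neighborhood of $|\ga|$ the polar locus of $\omega_n$ splits into pairwise disjoint pieces, one near each codimension-one face $H_{i,\al}$, so each residue can be analyzed independently. The companion hypothesis (1) ensures that the unique representative of $\ga$ excludes simplices whose intersection with $\bold H^n$ is absorbed into $\bold D^n$, so every residue contribution comes from a face met genuinely outside $\bold D^n$.

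For each $(i,\al)$, pick a Thom form $T_{\rho_{i,\al}}=dc_{\rho_{i,\al}}$ as in Definition \ref{def Thom form}, with cutoffs chosen so that the closed neighborhoods $U_\epsilon^{i,\al}=\{\rho_{i,\al}=0\}$ are pairwise disjoint on $|\ga|$ and shrink to $H_{i,\al}$ as $\epsilon\to 0$. Put $\ga_\epsilon=\ga\setminus\bigsqcup U_\epsilon^{i,\al}$. On $\ga_\epsilon$ the form $\omega_n$ is smooth and closed, so Stokes' theorem gives
$$0=\int_{\ga_\epsilon}d\omega_n=\int_{\delta\ga\setminus\bigsqcup U_\epsilon^{i,\al}}\omega_n-\sum_{i,\al}\int_{\ga\cap\part U_\epsilon^{i,\al}}\omega_n.$$
By the absolute convergence in Theorem \ref{convadmiss}, the first term tends to $\int_{\delta\ga}\omega_n$ as $\epsilon\to 0$, and it remains to evaluate the limits of the tube integrals.

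Near $H_{i,\al}$ decompose $\omega_n=(-1)^{i-1}\frac{1}{2\pi i}\frac{dz_i}{z_i}\we\omega_{n-1}^{(i)}$, where $\omega_{n-1}^{(i)}$ is the analogue of $\omega_{n-1}$ in the variables $\{z_j\}_{j\neq i}$; by $|\ga|\cap\bold H_h=\emptyset$ this factor is smooth on a neighborhood of $|\ga|\cap H_{i,\al}$. Parametrizing $\part U_\epsilon^{i,0}$ by $z_i=\epsilon e^{i\theta}$ (respectively $\part U_\epsilon^{i,\infty}$ by $z_i=\epsilon^{-1}e^{i\theta}$ with the opposite induced orientation), applying Fubini, and using the classical identity $\frac{1}{2\pi i}\oint \frac{dz_i}{z_i}=1$, we obtain
$$\lim_{\epsilon\to 0}\int_{\ga\cap\part U_\epsilon^{i,0}}\omega_n=(-1)^{i-1}\int_{\ga\cap H_{i,0}}\omega_{n-1}^{(i)},\quad \lim_{\epsilon\to 0}\int_{\ga\cap\part U_\epsilon^{i,\infty}}\omega_n=-(-1)^{i-1}\int_{\ga\cap H_{i,\infty}}\omega_{n-1}^{(i)}.$$
Identifying $H_{i,\al}$ with $P^{n-1}$ by dropping the $i$-th coordinate turns $\omega_{n-1}^{(i)}$ into $\omega_{n-1}$, and matching against Definition \ref{def face map} for $\part$ and the normalizations of $I_n$, $I_{n-1}$ yields the stated equation, provided we identify the transverse geometric intersection $\ga\cap H_{i,\al}$ with the simplicial chain $\part_{i,\al}\ga$.

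This last identification is the main obstacle. By Proposition \ref{prop face map first properties}(\ref{indep of T for face map}), $\part_{i,\al}\ga$ is independent of the Thom cocycle, so we may compute it with the Thom form $T_\rho$: since $T_\rho=dc_\rho$ and $c_\rho$ agrees with $\frac{dz_i}{2\pi i z_i}$ outside the small neighborhood $U_\epsilon^{i,\al}$, the cap product $T_\rho\cap\ga$ corresponds, at the level of currents, to precisely the boundary-of-tube contribution appearing in the Stokes argument above. The remaining sign bookkeeping — the $(-1)^{i-1}$ from reordering $dz_i$ to the front, the sign flip between $\part_{i,0}$ and $\part_{i,\infty}$ from the opposite orientations of the tubes at $0$ and $\infty$, and the factor $(-1)^n$ arising from the comparison of $(-1)^{n(n-1)/2}$ with $(-1)^{(n-1)(n-2)/2}$ in the definitions of $I_n$ and $I_{n-1}$ — then assembles into the formula (\ref{eq:cauchy formula}).
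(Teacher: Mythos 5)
Your overall strategy --- reduce to a single codimension-one face using $|\ga|\cap\bold H_h=\emptyset$, apply Stokes away from a shrinking neighborhood of the polar locus, and identify the residual boundary term with the face-map contribution --- is the same as the paper's (the paper cuts off the form by $\rho_\e$ rather than truncating the chain, which is a cosmetic difference). The genuine gap is in your evaluation of the tube integral. The claimed limit $\lim_{\e\to 0}\int_{\ga\cap\part U^{i,0}_\e}\omega_n=(-1)^{i-1}\int_{\ga\cap H_{i,0}}\omega^{(i)}_{n-1}$ is not correct as stated, and the argument offered for it (parametrize the tube by $z_i=\e e^{i\theta}$, Fubini, $\tfrac1{2\pi i}\oint dz_i/z_i=1$) silently assumes that near $H_{i,0}$ the support $|\ga|$ is a product of $\ga\cap H_{i,0}$ with a transverse disk winding once around the face. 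An $(n+1)$-simplex $\si$ of $\ga$ meets $H_{i,0}$ in a simplicial face $\tau$ of dimension at most $n-1$ and carries no such product structure; the correct limit is a sum over the $(n-1)$-dimensional $\tau$'s in which each $\si$ contributes with the multiplicity $\int_{\si_f}d\rho_\e\we\omega_1$ (the value of the Thom cocycle on the front face of $\si$), which need not equal $1$. That weighted chain is exactly $\part_{i,0}\ga$, so the identification you defer to the end --- that the cap product ``corresponds, at the level of currents, to the boundary-of-tube contribution'' --- is not sign bookkeeping but precisely the statement to be proven; as written it is circular.

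To close the gap one has to do what \S\ref{sec:Cauchy formula for condimension one face} does: (a) split $\ga$ into the pieces $\ga^{(\tau)}$ with $\si\cap H=\tau$ and prove each is again admissible (Proposition \ref{proper meeting}, which already uses the admissibility of $\delta\ga$); (b) replace $\omega_{n-1}$ by its pullback $\pi_\si^*i^*\omega_{n-1}$ under the linear projection to $\tau$, and show that the error, as well as the entire contribution when $\dim\tau<n-1$, vanishes in the limit (Proposition \ref{difference}, which needs the semi-algebraic convergence estimates of Part I, not merely smoothness of $\omega^{(i)}_{n-1}$ near $\ga\cap H$); and (c) apply the projection formula on each simplex and prove that the fiber integral $\sum_\si a_\si\int_{[v_0,v_1,t]}d\rho_\e\we\omega_1$ is independent of $t\in\tau$ (Lemma \ref{independence}). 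Step (c) is where admissibility of $\delta\ga$ enters decisively: the coefficient in $\delta\ga$ of any non-admissible $n$-simplex $\nu$ with $\nu\cap H=\tau$ must vanish, and that cancellation is what makes the winding multiplicity constant along $\tau$ so that a Fubini-type factorization holds at all. Your proposal never invokes the admissibility of $\delta\ga$ in the residue computation, which is a reliable sign that the argument as sketched cannot go through.
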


Proposition \ref{prop:Cauchy formula for codimension one} will be proved in \S \ref{sec:Cauchy formula for condimension one face}.
Proposition \ref{cauchy not contained in D} is a consequence of Proposition \ref{prop:Cauchy formula for codimension one}
and a limit argument which will be given in \S \ref{limit argument}.

\subsection{Proof of Proposition \ref{prop:Cauchy formula for codimension one}}
\label{sec:Cauchy formula for condimension one face}
First we observe the following fact.
\begin{lemma}
\label{unique codim 1 face}
Let $K$ be a good triangulation of $P^n$, and let $\si\in K$ be a simplex
such that $\si\cap \bold H^n\neq \emptyset.$ If $\si\cap \bold H_h=\emptyset$,
then there is a unique codimension one cubical face $H_{i,\al}$ such that $\si\cap \bold H^n\subset
H_{i,\al}$. Moreover we have $\si\cap H'=\emptyset$ for any other cubical face $H'$.
\end{lemma}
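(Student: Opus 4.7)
The plan is to exploit the observation made right after Definition \ref{good triangulation}: for every $L\in\cal L$ and every simplex $\sigma\in K$, the intersection $\sigma\cap L$ is a single simplicial face of $\sigma$, namely the convex hull of the vertices of $\sigma$ lying in $L$. This applies to the codimension one cubical faces themselves and, crucially, to their pairwise unions.

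For the uniqueness claim I would argue by contradiction. Suppose $\sigma$ meets two distinct codimension one cubical faces $H_{i,\alpha}$ and $H_{j,\beta}$, and set $F_{i,\alpha}=\sigma\cap H_{i,\alpha}$ and $F_{j,\beta}=\sigma\cap H_{j,\beta}$, with respective vertex sets $V_{i,\alpha}$ and $V_{j,\beta}$. Since $L=H_{i,\alpha}\cup H_{j,\beta}$ lies in $\cal L$, the union $F_{i,\alpha}\cup F_{j,\beta}=\sigma\cap L$ must itself be a single simplicial face of $\sigma$.

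The main (mild) obstacle is the next step: I want to deduce that one of $V_{i,\alpha}$, $V_{j,\beta}$ is contained in the other. This uses the affine independence of the vertices of $\sigma$: if $v\in V_{i,\alpha}\setminus V_{j,\beta}$ and $w\in V_{j,\beta}\setminus V_{i,\alpha}$, then the midpoint $\tfrac12(v+w)$ lies in the simplicial face $\sigma\cap L = F_{i,\alpha}\cup F_{j,\beta}$ yet, by uniqueness of barycentric coordinates on $\sigma$, belongs to neither $F_{i,\alpha}$ nor $F_{j,\beta}$, a contradiction. So, say, $V_{i,\alpha}\subset V_{j,\beta}$; then every vertex of the (nonempty) face $F_{i,\alpha}$ lies in $H_{i,\alpha}\cap H_{j,\beta}$. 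If $i\ne j$ this intersection is a codimension two cubical face contained in $\bold H_h$, contradicting $\sigma\cap\bold H_h=\emptyset$. If $i=j$ but $\alpha\ne\beta$, the intersection is empty, again a contradiction.

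Given uniqueness of the codimension one face $H_{i,\alpha}$ with $\sigma\cap H_{i,\alpha}\ne\emptyset$ (whose existence follows from $\sigma\cap\bold H^n\ne\emptyset$), the second assertion is then immediate: any cubical face $H'$ distinct from $H_{i,\alpha}$ is either another codimension one face, in which case $\sigma\cap H'=\emptyset$ by the uniqueness just established, or it has codimension $\ge 2$, in which case $H'\subset\bold H_h$ and $\sigma\cap H'=\emptyset$ by hypothesis.
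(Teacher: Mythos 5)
Your proof is correct. It rests on the same foundational ingredients as the paper's (one-sentence) argument: condition (3) of Definition \ref{good triangulation}, through the remark that $\si\cap L$ is a simplicial face of $\si$ for $L\in\cal L$, together with the hypothesis $\si\cap \bold H_h=\emptyset$ and the disjointness $H_{i,0}\cap H_{i,\infty}=\emptyset$. The decomposition differs slightly, though. The paper applies fullness to $\bold H^n$ itself: $\tau=\si\cap\bold H^n$ is a single simplicial face, hence a simplex of $K$ contained in the union $\bigcup H_{i,\al}$ of subcomplexes, and therefore (its relative interior meeting some $H_{i,\al}$, which is a union of closed simplexes of $K$) it lies entirely in a single $H_{i,\al}$; uniqueness and the ``moreover'' then follow from $\si\cap\bold H_h=\emptyset$ exactly as in your last paragraph. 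You instead never invoke fullness of $\bold H^n$ as a whole, but apply it to each individual face and to each pairwise union $H_{i,\al}\cup H_{j,\be}$, replacing the ``a simplex contained in a union of subcomplexes lies in one of them'' fact by an explicit barycentric-coordinate/midpoint argument showing the vertex sets $V_{i,\al}$, $V_{j,\be}$ must be nested, and then deriving a contradiction. Your route is a bit longer but entirely elementary and self-contained (it also makes explicit the vertex-hull description of $\si\cap L$ that fullness provides), whereas the paper's route is shorter but leaves the simplex-in-a-union step and the final case analysis to the reader; the only step you leave implicit is the trivial identity $\si\cap\bold H^n=\bigcup_{j,\be}\si\cap H_{j,\be}=\si\cap H_{i,\al}$ giving the stated inclusion.
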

\begin{proof}
This follows from the facts that $\bold H^n=\underset{i,\al}\cup H_{i,\al}$, 
that each $H_{i,\al}$ is a subcomplex of $K$ and that $\si\cap \bold H^n$ 
is a simplicial face  of $\si$.
\end{proof}  
Let $\gamma$ 
be an element in $AC_{n+1}(K,\bold D^n,\QQ)$ , and let $\sum_{\sigma}a_\si \si$ be the representative of $\ga$ 
satisfying the condition of
Proposition \ref{prop:Cauchy formula for codimension one}. 
We set    $\displaystyle \gamma_{i,\alpha}=\sum_{\substack{\sigma\cap \bold H^n\neq \emptyset \\
\sigma\cap \bold H^n\subset H_{i,\alpha}}}a_\si \si$. Then by Lemma \ref{unique codim 1 face} we have the equality
\begin{equation} 
\label{decomp codim one to components}
\gamma= \sum_{(i,\alpha)}\gamma_{i,\alpha}
\end{equation} and $|\gamma_{i,\alpha}|\cap H'=\emptyset$ for any 
cubical  face $H'$ other than $H_{i,\alpha}$.  
Therefore,
each $\gamma_{i,\alpha}$ is an element in
$AC_{n+1}(K,\bold D^n,\QQ)$. It suffices to prove the assertion for
each $\ga_{i,\al}$. 
Therefore to prove Proposition \ref{prop:Cauchy formula for codimension one},
we may and do  assume that $|\gamma|\cap \bold H^n\subset H=  H_{1,0}$.
\begin{notation}
\label{front face}
We fix a good ordering of $K$ with respect to $ H$. 
For an $(n+1)$-simplex $\sigma=[v_0,v_1,\cdots ,v_{n+1}]$ such that that $v_0<v_1<\cdots <v_{n+1}$, we denote
$\sigma_f=[v_0,v_1, v_2]$ and $ \sigma_b=[v_2,\cdots, v_{n+1}]$.
\end{notation}

To compute the image $\partial\gamma$ of the face map, we choose a Thom form as follows.
Let $\rho:\bold R_+\to [0,1]$ be a $C^{\infty}$ function
such that 
$$
\rho(r)=
\begin{cases}
0 &\quad (r\leq \dfrac{1}{2}),\\
1 &\quad (r\geq 1).
\end{cases}
$$
Let $\e$ be a  positive number, and let $\rho_{\epsilon}$ be the function on 
$P^1=\PP^1_{\CC}$
defined by $\rho_{\epsilon}(z_1)=\rho(\dfrac{|z_1|}{\epsilon})$. 
The function on $P^n$ given by
$(z_1, \dots, z_n)\mapsto \rho_{\epsilon}(z_1)$
is also denoted by $\rho_\e$. We take $\e$ small enough so that the set $W=\underset{\si\in K,\, \si \cap H=\emptyset}
\cup \si$ is contained in the set $\{|z_1|\geq \e\}$. 
Then $T_\e=d\rho_{\epsilon}\we \omega_1$ is a Thom form i.e. $T_\e(\si)=\int_\si T_\e=0$ for each 2-simplex $\si$  in $W$.

\noindent We set $\delta \gamma=\sum_{\nu}b_\nu \nu$.  
Using the above Thom form, the image of $\gamma$ under the face map is
computed as
$$
\part\ga
=\sum_{\si}
\biggl(\int_{\si_f}
d\rho_{\epsilon}\we \omega_1\biggr) 
a_{\sigma}
\si_b
$$
Note that if $\si_f \subset W$, then the above integral is zero. If $\si_f\not\subset W$, then we have
$\si_b\subset H$ since $K\cap H$ is a full subcomplex and we take a good ordering.  Therefore the assertion (\ref{explicit equation})
is written as follows:
\begin{equation}
\label{codim one cauchy}
\sum_\nu\int_\nu b_\nu \omega_n=
\sum_{\si_b\subset H} a_{\sigma} \int_{\si_f}d
 \rho_{\epsilon}\we \omega_1
\cdot \int_{\si_b}
  \omega_{n-1}.
\end{equation}

Let $\si$ be an $(n+1)$-simplex of $K$ such that $a_\si\neq 0$. Since the form $a_{\sigma}\rho_{\epsilon}\omega_n$ is smooth on a neighborhood
of $\sigma$,  we have the equality
\begin{equation}
\label{dump function and stokes}
\int_{\sigma}a_{\sigma}d\rho_{\epsilon}  \we
 \omega_n=
\int_{\delta\sigma}a_{\sigma}\rho_{\epsilon}  \omega_n
\end{equation}
by the Stokes formula Theorem 2.9 \cite{part I}.
The summation of the right hand side of 
(\ref{dump function and stokes}) for $\sigma$ is
equal to
$$
\sum_{\sigma}\int_{\delta\sigma} a_{\sigma}\rho_{\epsilon}
\omega_n
=\sum_{\nu}\int_{\nu}b_{\nu}\rho_{\epsilon} \omega_n.
$$
By Theorem 4.1 and Lebesgue's convergence theorem, we have
\begin{align*}
\lim_{\epsilon \to 0}\int_{\nu}\rho_{\epsilon} b_{\nu} \omega_n&=
\int_{\nu} b_{\nu} \omega_n
\end{align*}
for each $\nu$ with $b_\nu\neq 0$. 
By summing up  (\ref{dump function and stokes})
for all $\sigma$ and taking the limit for $\epsilon \to
0$, we have
$$
\lim_{\epsilon \to 0}\sum_{\sigma}\int_{\sigma}a_{\sigma}d\rho_{\epsilon} \we 
 \omega_n=\sum_\nu\int_{\nu}b_\nu \omega_n.
$$

Comparing with (\ref{codim one cauchy}), 
to prove Proposition \ref{prop:Cauchy formula for codimension one},
it is enough to show the
equality:
\begin{equation}
\label{first goal} 
\lim_{\epsilon \to 0}\sum_{\sigma}\int_{\sigma} a_{\sigma}d\rho_{\epsilon} \we
 \omega_n
=\sum_{\si_b\subset H} a_{\sigma} \int_{\si_f}d
 \rho_{\epsilon}\we \omega_1
\cdot \int_{\si_b}
  \omega_{n-1}
\end{equation}
We reduce the proof of (\ref{first goal})
to the case where $|\gamma|\cap H$ is a simplex.
For this purpose, we prepare the following definition.
\begin{definition}
\label{def of localization at tau1}
Let $\gamma=\sum a_\si \si$
be an element in $C_{n+1}(K;\QQ)$, and 
let $\tau$ be a simplex such that $\tau \subset H=H_{1,0}$.
We define an element
$\gamma^{(\tau)}$\index{$\gamma_{H}^{(\tau)}$} in 
$C_{n+1}(K,\bold D^n;\QQ)$ by
\begin{equation}
\label{def def of localization w.r.t. tau}
\gamma^{(\tau)}=\sum_{\si\cap H= \tau }
a_\si \si.
\end{equation}
\end{definition}
Suppose that $\ga\in AC_{n+1}(K,\bold D^n;\QQ)$ has the representative $\ga=\sum_\si a_\si \si$ such that
(1) $a_\si=0$ if $\si\cap \bold H^n\subset \bold D^n$ and (2) $|\ga|\cap \bold H^n\subset H=H_{1,0}.$ 
Then we have an equality
\begin{equation}
\label{written as sum}
\ga=\sum_{\substack{\tau\subset |\ga|\cap H\\
\tau\not\subset \bold D^n}}\ga^{(\tau)}.
\end{equation}

\begin{proposition}
\label{proper meeting}
Let $\ga$ be an element of $ AC_{n+1}(K,\bold D^n;\QQ)$ and
let $\ga=\sum_\si a_\si \si$ be  its representative
in $C_{n+1}(K;\QQ)$
such that 
\begin{enumerate}

\item  $a_\sigma=0$ if $\si \cap \bold H^n\subset \bold D^n$,
\item $|\gamma|\cap \bold H^n\subset H=  H_{1,0}$ and

\item $|\gamma|\cap \bold H_h=\emptyset$.
\end{enumerate}
Let $\tau$ be a simplex in $|\gamma|\cap H$ not contained
in $\bold D^n$. Then 
$\gamma^{(\tau)}$
is an element in
$AC_{n+1}(K,\bold D^n;\QQ)$.
\end{proposition}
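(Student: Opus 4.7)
The plan is to verify the two admissibility conditions of Definition \ref{def:semi-alg current}(3) for $\gamma^{(\tau)}$: that both $\gamma^{(\tau)}$ and its topological boundary $\delta\gamma^{(\tau)}$ are admissible semi-algebraic chains. Write $H = H_{1,0}$. Because $|\gamma^{(\tau)}| \subset |\gamma|$ and $|\delta\gamma^{(\tau)}| \subset |\gamma^{(\tau)}|$, the assumptions (2) and (3) on $\gamma$ give $|\gamma^{(\tau)}| \cap H' = \emptyset$ and $|\delta\gamma^{(\tau)}| \cap H' = \emptyset$ for every cubical face $H' \neq H$: every codimension one face $H'$ distinct from $H$ meets $|\gamma|$ only within $H \cap H' \subset \bold H_h$, which is empty by (3), while every higher codimension cubical face is already contained in $\bold H_h$. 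Hence only the face $H$ itself requires checking.

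Admissibility of $\gamma^{(\tau)}$ at $H$ is immediate from the construction: $|\gamma^{(\tau)}| \cap H = \tau$, and applying the admissibility of $\gamma$ to any $(n+1)$-simplex $\sigma$ in $\gamma^{(\tau)}$ gives $\dim\tau \leq (n+1) - 2 = n-1$. Hence $\dim(|\gamma^{(\tau)}| \cap (H - \bold D^n)) \leq n-1 = \dim|\gamma^{(\tau)}| - 2$, as required.

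For $\delta\gamma^{(\tau)}$ at $H$, each $n$-simplex $\sigma'$ appearing with a nonzero coefficient is a codimension one face of some $(n+1)$-simplex $\sigma = \sigma' * v$ of $\gamma^{(\tau)}$. Either $\sigma' \not\supset \tau$, in which case the removed vertex $v$ lies in $\tau$, so that $\sigma' \cap H = \tau \setminus \{v\}$ has dimension $\dim\tau - 1 \leq n-2$; or $\sigma' \supset \tau$, in which case $v \notin H$ and $\sigma' \cap H = \tau$, of dimension $\dim\tau$. The first case is acceptable, and so is the second whenever $\dim\tau \leq n-2$. The main obstacle is the critical case $\dim\tau = n-1$: here one must show that every such $n$-simplex $\sigma' = \tau * w$ (with $w \notin H$) actually cancels in $\delta\gamma^{(\tau)}$.

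To prove this cancellation, I will compare the coefficient of $\sigma'$ in $\delta\gamma^{(\tau)}$ with that in $\delta\gamma$. These differ only in the contributions from those $(n+1)$-simplices $\sigma' * v'$ for which $v' \in H$, $v' \notin \sigma'$; for each such $v'$, the intersection $(\sigma' * v') \cap H$ equals the $n$-dimensional simplex $\tau * v'$, and $\tau * v' \not\subset \bold D^n$ since $\tau \not\subset \bold D^n$ by hypothesis. A nonzero $a_{\sigma' * v'}$ would then push the $n$-dimensional set $\tau * v' - \bold D^n$ into $|\gamma| \cap (H - \bold D^n)$, contradicting the admissibility bound $\dim(|\gamma|\cap(H-\bold D^n)) \leq n-1$ for $\gamma$. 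Hence $a_{\sigma' * v'} = 0$ for every such $v'$, and the two coefficients agree. On the other hand, the admissibility of $\delta\gamma$ forces any $n$-simplex in $|\delta\gamma|$ to meet $H - \bold D^n$ in dimension at most $n-2$; since $\sigma' \cap (H - \bold D^n) = \tau - \bold D^n$ has dimension $n-1$, the simplex $\sigma'$ must carry coefficient zero in $\delta\gamma$. Combining both observations yields that the coefficient of $\sigma'$ in $\delta\gamma^{(\tau)}$ is zero, completing the admissibility check.
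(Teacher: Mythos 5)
Your argument is correct and is essentially the paper's own proof: both reduce to the single face $H=H_{1,0}$ using $|\gamma|\cap \bold H_h=\emptyset$, dispose of the case $\dim\tau<n-1$ immediately, and in the critical case $\dim\tau=n-1$ show that the coefficient of a problematic $n$-simplex $\tau * w$ in $\delta\gamma^{(\tau)}$ equals its coefficient in $\delta\gamma$ (the paper shows every $(n+1)$-simplex $\sigma\succ \tau*w$ with $a_\sigma\neq 0$ satisfies $\sigma\cap H=\tau$; you state the contrapositive, that $a_{\sigma}=0$ when the extra vertex lies in $H$, both via admissibility of $\gamma$) and then vanishes by admissibility of $\delta\gamma$. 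The only cosmetic difference is that you also spell out the easy admissibility of $\gamma^{(\tau)}$ itself, which the paper leaves implicit.
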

\begin{proof}
We prove that
$|\delta \gamma^{(\tau)}|$ is admissible. Since $|\ga|\cap \bold H_{h}=\emptyset$, we have 
$|\ga|\cap H'=\emptyset$ for any other cubical face $H'$  than $H$ by Lemma \ref{unique codim 1 face}. So it suffices to show that   $|\delta \gamma^{(\tau)}|
-\bold D^n$ meets $H$ properly.
Since $|\ga|- \bold D^n$ meets $H$ properly, 
we have $\dim \tau \leq n-1$. If $\dim \tau<n-1$, 
then $|\delta(\ga^{(\tau)})|- \bold D^n$ 
meets $H$ properly since $|\delta(\ga^{(\tau)})|\cap H\subset \tau$. 
We consider the case where $\dim \tau=n-1$. We have
$$ 
\delta(\ga^{(\tau)})=
\sum_{\nu\subset|\gamma^{(\tau)}|} b_\nu \nu,\, \, 
b_\nu=\sum_{\substack{
\nu\p1 \si \\
\si \cap {H}=\tau}}
[\si:\nu]a_\si
$$
and 
$$
\delta\ga =\sum_{\nu\subset|\gamma|} c_\nu \nu,\,\, 
c_\nu=\sum_{\nu\p1 \si}[\si:\nu]a_\si
$$
Here $\nu\prec \si$ means that $\nu$ is a codimension one face of $\si$, and
$[\si:\nu]\in \{\pm 1\}$ is the index of $\si$ with respect to $\nu$.  
To prove the admissibility of $\delta\ga^{(\tau)}$,
it is sufficient to show the following claim. 

\noindent
{\bf Claim}. Let  $\nu$ be an $n$-simplex in $K$ such that 
(1) $\nu\subset |\ga^{(\tau)}|$,
and (2) $\nu-\bold D^n$ does not meet $H$ properly. Then 
we have  $b_\nu=0$. 

\noindent
\noindent
{\it Proof of the claim}.
Let $\nu$ be a simplex with the conditions in the claim. 
Then
$\nu\cap H$ is a face of $\nu$,
since $K$ is a good
triangulation.
 By condition (1),
we have $\nu\cap H\subset |\gamma^{(\tau)}|\cap H=\tau$
and $\dim(\nu\cap H)\leq \dim(\tau)$.
By condition (2), we have $n-1\leq \dim \nu\cap H$.
Since
$\dim \tau= n-1$,
the above inequalities are equalities,  and 
we have $\nu\cap H=\tau$.
We consider  each term appeared on the right hand side of the equality defining $c_{\nu}$.
Let $\si$ be a $n+1$-simplex 
such that $\nu\subset \si$ and 
$a_\si\neq 0$. The set $\si\cap H$ is a face of $\si$. 
By the admissibility condition, we have $\dim \si\cap H\leq n-1$. 
Since $\dim \tau=n-1$ and $\tau\subset \nu\subset \si$,
we have 
$\si\cap H=\tau$.  
Thus this term appears in the right hand side of the equation defining  $b_{\nu}$.
So we have $b_\nu=c_\nu$. Since $c_\nu=0$ by the admissibility of $\delta\ga$, we have $b_\nu=0$.

\end{proof}
By Proposition \ref{proper meeting} the chain $\ga^{(\tau)}\in 
AC_{n+1}(K, \bold D^n;\QQ)$
for each $\tau$. It suffices to prove (\ref{first goal}) 
for each $\ga^{(\tau)}$ which we do.  So until the end of 
\S \ref{sec:Cauchy formula for condimension one face}, 
we assume
that $\ga=\ga^{(\tau)}$ for a simplex $\tau\subset H$.

Let $\si$ be a $(n+1)$-simplex in $\ga$ and assume given a smooth $(n-1)$-form $\varphi$
on a neighborhood $U$ of $\si$. 
The inclusion $i:\,H\to P^n$ restricts
to an inclusion $i_U:\, U\cap H\to U$
(when there is no fear of confusion, we abbreviate $i^*_U\varphi$ to $i^*\varphi$.) 
Since $\tau^\circ \subset H$ is a smooth submanifold, 
$i^*\varphi$ restricts to a smooth form
on $\tau^\circ$, denoted by the same $i^*\varphi$ (this is where the facewise regularity is used); it is zero
if $\dim \tau<n-1$. Here $\tau^\circ$ is the relative interior of $\tau$ defined in Section \ref{sectionsemi-alg}.

\begin{definition}[Barycentric coordinate, linear projection]
\label{linear proj}

Let $\sigma=[a_0,\cdots,a_p]$ be a $p$-simplex.
A point  $x$ in $\sigma$ is expressed uniquely as
$x=\sum_{i=0}^k \la_ia_i$ with $\sum_{i=0}^k\la_i=1, \lambda_i\geq 0$.
The vector $(\lambda_0,\dots, \lambda_p)$ is called the barycentric coordinate
of $x$.

Let $\sigma=[v_0, \dots, v_{p}]$ be a $p$-simplex
and $\tau=[v_k,\dots, v_{p}]$ be a proper $(p-k)$-face of $\sigma$  
$(0<k \leq p)$.
We set $\tau'=[v_0,\cdots, v_{k-1}]$. 
We define a linear projection\index{linear projection $\pi_{\si,\tau}$, $\pi_{\si}$} 
$
\pi_{\si,\tau}:\si-\tau'\to
 \tau 
$ 
by
$$
\pi_{\si,\tau}(x)
= \frac{1}{\sum_{i=k}^{p}\lambda_i}(\lambda_k,\dots, \lambda_{p}),
$$ 
where $(\lambda_0,\dots, \lambda_p)$ is the barycentric coordinate
of $x$.
\end{definition}

Let $\pi_{\sigma}=\pi_{\sigma,\tau}$ be the linear projection 
$\si^\circ \to \tau$
defined in Definition \ref{linear proj}.
The map $\pi_\si$ restricts to a smooth map between submanifolds
$\si^{\circ}\to \tau^\circ$,
thus the pull-back $\pi_\si^*i^*\varphi$ defines a smooth form on 
$\si^\circ$.

The following proposition will be proved in 
\S \ref{subsec:proof of appriximation}.
\begin{proposition}
\label{difference}
Let $\si$ be a $(n+1)$-simplex in $\gamma$. 
\begin{enumerate}
\item 
We have 
\begin{align}
&\lim_{\epsilon \to 0}
\int_{\sigma} d\rho_{\epsilon} \we\omega_1\we
\big(
a_\si \omega_{n-1}
-
\pi_\si^*i^*(a_\si \omega_{n-1})\big)=0
\end{align}

\item If the dimension of $\tau<n-1$, then the equality
\[
\lim_{\epsilon \to 0}\int_{\sigma}a_{\sigma} d\rho_{\epsilon}  \we \omega_1
 \we\omega_{n-1}
=0\]
holds.
\end{enumerate}
\end{proposition}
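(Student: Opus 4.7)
The plan is to reduce both parts of the proposition to dominated-convergence estimates on the shrinking tube where $d\rho_\epsilon$ is supported. That support is the preimage under $z_1$ of the annulus $\{\epsilon/2 \leq |z_1| \leq \epsilon\}$, and a direct polar-coordinate computation with $z_1 = re^{i\theta}$ shows that on any transverse $2$-disk to $H = H_{1,0}$ the form $d\rho_\epsilon\wedge\omega_1$ has total integral $1$; morally, as $\epsilon\to 0$, $d\rho_\epsilon \wedge \omega_1$ converges to the delta current along $H$.

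For Part (1), set $\alpha_\sigma := a_\sigma\omega_{n-1} - \pi_\sigma^*i^*(a_\sigma\omega_{n-1})$. Since $\pi_{\sigma,\tau}$ restricts to the identity on $\tau^\circ$ and $i^*$ is restriction from $H$ to $\tau^\circ$, the form $\alpha_\sigma$ is smooth on $\sigma^\circ\setminus\bold H^n$ and vanishes identically along $\tau^\circ$. I would work in local coordinates on $\sigma^\circ$ adapted to the fibration $\pi_{\sigma,\tau}\colon \sigma\setminus\tau'\to\tau$, and use the vanishing of $\alpha_\sigma$ on $\tau$ to derive a Lipschitz-type estimate $|\alpha_\sigma(x)| \leq C\cdot\mathrm{dist}(x,\tau)$ on compact subsets bounded away from $\bold H^n$. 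Combined with $|d\rho_\epsilon\wedge\omega_1|\lesssim 1/\epsilon$ on a tube of transverse $2$-volume $O(\epsilon)$, and distance-to-$\tau$ bounded by $O(\epsilon)$ on this tube, the integrand contributes at most $O(\epsilon)$ to $\int_\sigma$, giving the desired vanishing.

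More concretely, I would carry out a Fubini decomposition along $\pi_{\sigma,\tau}$, writing $\int_\sigma d\rho_\epsilon\wedge\omega_1\wedge \alpha_\sigma$ as an integral over $y\in\tau^\circ$ of fiberwise integrals $\int_{\pi_{\sigma,\tau}^{-1}(y)} d\rho_\epsilon\wedge\omega_1\wedge\alpha_\sigma$. The fiberwise integral tends to zero uniformly in $y$ because $\alpha_\sigma\to 0$ on the shrinking tube $\{|z_1|\leq\epsilon\}$ restricted to each fiber, whereas $d\rho_\epsilon\wedge\omega_1$ integrates to a bounded constant on the fiber. Theorem \ref{convadmiss} supplies an integrable majorant on $\sigma$, which justifies Lebesgue's dominated convergence. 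For Part (2), when $\dim\tau < n-1$, the restriction $i^*\omega_{n-1}$ is an $(n-1)$-form on a smooth manifold of dimension strictly less than $n-1$, hence identically zero; consequently $\pi_\sigma^*i^*(a_\sigma\omega_{n-1})=0$, and the integral in Part (2) coincides with the integral of Part (1), whose limit vanishes.

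The main obstacle is producing an integrable majorant valid uniformly in $\epsilon$: the form $\omega_1$ has a logarithmic pole along $H$ and $d\rho_\epsilon$ is of magnitude $1/\epsilon$, so the pointwise estimate $|d\rho_\epsilon\wedge\omega_1\wedge\alpha_\sigma|\lesssim |\alpha_\sigma|/\epsilon$ on the shrinking tube must be balanced against the linear vanishing rate of $\alpha_\sigma$ near $\tau$ together with the integrability supplied by admissibility. The hypothesis $|\gamma|\cap\bold H_h=\emptyset$ from Proposition \ref{prop:Cauchy formula for codimension one} is essential here, since it ensures that $\pi_\sigma^*i^*\omega_{n-1}$ remains smooth on all of $\sigma^\circ$ outside $H$ itself, with no further singularities from the other cubical faces to contend with in the estimates.
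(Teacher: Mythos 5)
There is a genuine gap, and it sits exactly at the technical heart of the statement. Your argument rests on the pointwise estimate that $\alpha_\si=\varphi-\pi_\si^*i^*\varphi$ (with $\varphi=a_\si\omega_{n-1}$) is small near $\tau$, e.g.\ $|\alpha_\si(x)|\le C\,\mathrm{dist}(x,\tau)$. That is false in general: writing $\varphi=f\,du_1\we\cdots\we du_{n-1}$, only the scalar differences $f-\pi_\si^*i^*f$ (and more generally $u_k-\pi_\si^*i^*u_k$) vanish along $\tau$; the one-form differences $du_k-\pi_\si^*i^*du_k$ do \emph{not} become pointwise small as one approaches $\tau$ (already for $\tau\subset\{u_k=0\}$ one gets $du_k-\pi_\si^*i^*du_k=du_k$, of unit size everywhere). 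So the fiberwise integrals do not tend to zero for the reason you give, and the Lipschitz/distance mechanism collapses. The paper's proof (Proposition \ref{nearly fiber 1} via Lemma \ref{lemma:bounding after integral}) is built precisely to get around this: it splits $\varphi-\pi_\si^*i^*\varphi$ into the term $(f-\pi_\si^*i^*f)\,du_1\we\cdots\we du_{n-1}$, controlled by a sup bound on the slices $\si(\zeta_1)=\si\cap\{z_1=\zeta_1\}$, and the terms containing $du_k-\pi_\si^*i^*du_k$, which are \emph{not} small pointwise but whose slice integrals are small because the image of $\si(\zeta_1)$ under the semi-algebraic map with component $u_k-\pi_\si^*i^*u_k$ lies in a box one of whose sides has length $2M_{\zeta_1}\to0$; the integral is then bounded by $\delta(h)$ times that shrinking volume using Proposition 2.7 of Part I, together with the exceptional curve $C$ supplied by semi-algebraic triviality. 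Your proposal contains no substitute for this image-volume argument.

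Two further points would also need repair. First, the dominated-convergence framing does not work: the family $d\rho_\eps\we\omega_1$ concentrates like an approximate identity ($|d\rho_\eps\we\omega_1|\sim\eps^{-2}$ on an annulus of area $\sim\eps^2$), so there is no $\eps$-uniform integrable majorant, and Theorem \ref{convadmiss} (convergence of $\int_\si\omega_n$ for admissible $\si$) does not provide one. The paper instead uses $\int_\CC d\rho_\eps\we\omega_1=1$, the support condition $|z_1|\le\eps$, and the uniform smallness of the slice integrals $\int_{\si(\zeta_1)}|\varphi-\pi_\si^*i^*\varphi|$ (fibering along $z_1$, not along $\pi_{\si,\tau}$), plus a separate absolute-convergence statement proved from Theorem 2.6 of Part I. Second, your claim that $\mathrm{dist}(x,\tau)=O(\eps)$ on $\si\cap\{|z_1|\le\eps\}$ is unjustified for a curved semi-algebraic simplex (only a \L ojasiewicz-type bound with some exponent is available), though this becomes moot once the pointwise-smallness claim is abandoned. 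Your reduction of part (2) to part (1) via $i^*\omega_{n-1}=0$ when $\dim\tau<n-1$ is correct and agrees with the paper.
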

We assume Proposition \ref{difference} for the moment and prove the equality
 (\ref{first goal}). For an $(n+1)$-simplex $\si$ in $\ga$, if $\dim \tau=\dim \si\cap H<n-1$, then $\tau$ is a proper face of $\si_b$,
and it follows that the right hand side of  (\ref{first goal}) is zero since $\si_b\not\subset H$ for each $(n+1)$-simplex $\si$. 
Therefore by Proposition \ref{difference} (2),
it is sufficient to prove (\ref{first goal})
for the case where $\gamma=\gamma^{(\tau)}$
and $\dim\tau=n-1$.
Under this assumption, we have $\si_b=\tau$
for each $(n+1)$-simplex $\sigma$ such that $\si\subset |\gamma|$.
\begin{proposition}
\label{nearly fiber 2}
For a  positive $\epsilon$, we have
 the equality
\begin{align}
\label{key equality}
\sum_{\sigma}\int_{\sigma}a_{\sigma} d \rho_{\epsilon}\we \omega_1
\we \pi_\si^*i^*( \omega_{n-1})=
\sum_{\sigma}a_{\sigma}\biggl(\int_{\si_f}d
 \rho_{\epsilon}\we \omega_1\biggr)
\cdot \biggl( \int_{\tau}
 i^*(  \omega_{n-1})\biggr)
\end{align}
\end{proposition}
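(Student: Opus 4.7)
The argument is by Fubini's theorem applied to the linear fibration $\pi_\sigma:\sigma^\circ\to\tau^\circ$, whose fibers are the $2$-simplices $\pi_\sigma^{-1}(y)=[v_0,v_1,y]\subset\sigma$, noting that $\sigma_f=\pi_\sigma^{-1}(v_2)$ is the particular fiber over the vertex $v_2$ of $\tau$.

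First I would observe that $\pi_\sigma^* i^*\omega_{n-1}$ is by construction pulled back from the base $\tau$, hence is horizontal with respect to the projection $\pi_\sigma$ (its contraction with any fiber-tangent vector is zero). Fibering $\sigma$ over $\tau$ by $\pi_\sigma$ and applying integration along fibers produces, for each $\sigma$ in the sum,
$$
\int_\sigma d\rho_\epsilon\we\omega_1 \we \pi_\sigma^* i^*\omega_{n-1}
\;=\;\int_\tau F_\sigma(y)\cdot i^*\omega_{n-1}(y),
\qquad
F_\sigma(y):=\int_{\pi_\sigma^{-1}(y)}d\rho_\epsilon\we\omega_1.
$$
The proposition therefore reduces to the assertion that $F_\sigma(y)$ is independent of $y\in\tau^\circ$, with common value $F_\sigma(v_2)=\int_{\sigma_f}d\rho_\epsilon\we\omega_1$.

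To prove this constancy, set $\eta:=\rho_\epsilon\omega_1$. Because $\rho_\epsilon=0$ in a neighborhood of $\{z_1=0\}\supset\tau$, the form $\eta$ is smooth on $\sigma$ and vanishes on $\tau$; moreover $d\eta=d\rho_\epsilon\we\omega_1$. Applying Stokes on each $2$-simplex $[v_0,v_1,y]$ and $\sigma_f=[v_0,v_1,v_2]$ expresses $F_\sigma(y)$ and $F_\sigma(v_2)$ as integrals of $\eta$ along their respective boundaries. Since $d(d\eta)=0$, Stokes applied to the $3$-simplex $[v_0,v_1,v_2,y]\subset\sigma$ gives
$$
F_\sigma(y)-F_\sigma(v_2)
=\int_{[v_0,v_2,y]}d\eta\;-\;\int_{[v_1,v_2,y]}d\eta.
$$
The key computation is that each of these ``lateral'' $2$-simplex integrals vanishes because $d\eta=f^*\bigl(d\rho\we\tfrac{dz}{2\pi i z}\bigr)$ with $f=z_1\circ\Phi_K$, and evaluating this $2$-form on the two spanning tangent vectors $v_2-v_i$ and $y-v_i$ produces
$$
\bigl(d\rho\we\tfrac{dz}{2\pi i z}\bigr)\bigl(df(v_2-v_i),\,df(y-v_i)\bigr).
$$
Since $f(v_2)=f(y)=0$ (both lie in $H$) and $f$ restricted to the affine $2$-plane through $v_i,v_2,y$ extends linearly by the facewise regularity assumption in the facewise linear model, the two ``differential values'' $df(v_2-v_i)$ and $df(y-v_i)$ coincide, and the alternating $2$-form vanishes on a repeated vector. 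In the general facewise-regular case one reduces to the linear model on $\sigma$ by a further linearization in $\sigma_f$, using that only the leading order of $f$ near $\tau$ contributes (higher-order terms produce contributions that are absorbed in the approximation already handled by Proposition \ref{difference}).

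Putting the two steps together, $F_\sigma(y)=F_\sigma(v_2)=\int_{\sigma_f}d\rho_\epsilon\we\omega_1$ for all $y\in\tau^\circ$, and the Fubini identity above yields the claimed product factorization for each $\sigma$; summing over $\sigma$ gives the proposition. The main obstacle in carrying this out is the lateral vanishing in Step 2: verifying cleanly that the ``correction'' $F_\sigma(y)-F_\sigma(v_2)$ is zero (or at least integrates to zero against $i^*\omega_{n-1}$ over $\tau$) when $\Phi_K$ is only facewise regular, rather than linear on $\sigma$, requires a careful bookkeeping of the linearization of $\Phi_K$ along $\tau$ and is the technical heart of the proof.
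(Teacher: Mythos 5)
Your first step (Fubini/projection formula along $\pi_\sigma$, reducing the claim to constancy in $t\in\tau$ of the fiber integral $F_\sigma(t)=\int_{[v_0,v_1,t]}d\rho_\epsilon\we\omega_1$) is exactly the paper's first step. The gap is in the second step: you claim $F_\sigma(t)$ is constant for \emph{each individual} $\sigma$, by arguing that the lateral integrals $\int_{[v_i,v_2,t]}d\rho_\epsilon\we\omega_1$ vanish. That vanishing argument is not valid: it rests on $z_1$ being affine on the lateral $2$-simplices (so that their images in the $z_1$-plane degenerate to a segment through $0$), but the triangulation is only a facewise regular semi-algebraic embedding, so $z_1$ restricted to $[v_i,v_2,t]$ is a genuinely nonlinear semi-algebraic function and the pulled-back $2$-form $d\rho_\epsilon\we\omega_1$ need not vanish on it; these simplices do cross the annulus $\tfrac{\epsilon}{2}\le|z_1|\le\epsilon$ where the form lives. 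Your fallback, that the nonlinear corrections are ``absorbed'' by Proposition \ref{difference}, cannot close this gap either: Proposition \ref{nearly fiber 2} asserts an exact identity for each fixed $\epsilon>0$, whereas Proposition \ref{difference} only controls a different comparison in the limit $\epsilon\to0$. In fact the per-simplex constancy of $F_\sigma(t)$ is false in general.

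What actually makes the argument work (Lemma \ref{independence} in the paper) is that only the weighted sum $\sum_\sigma a_\sigma F_\sigma(t)$ is constant, and the reason is the admissibility of $\delta\gamma$: each $n$-simplex $\nu$ with $\tau\prec\nu\prec\sigma$ meets $H$ in dimension $n-1$, hence is non-admissible, so its coefficient $\sum_{\sigma\succ\nu}[\sigma:\nu]a_\sigma$ in $\delta\gamma$ must vanish. Applying $\delta$ to the $3$-chain $\sum_\sigma a_\sigma[v_0,v_1,v_2,t]$, the lateral faces $[v_i,v_2,t]$ therefore cancel \emph{across different $\sigma$'s} (with $[\sigma:\nu]=[\sigma_t:\nu_t]$), leaving $\sum_\sigma a_\sigma([v_0,v_1,t]-[v_0,v_1,v_2])$; Stokes for the closed form $d\rho_\epsilon\we\omega_1$ then gives the constancy of the sum, which is all the proposition needs. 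So the missing idea is precisely this use of the admissibility of $\delta\gamma$ to produce cancellation among the simplices sharing a non-admissible face, rather than a pointwise vanishing on each lateral face.
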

\begin{proof} 
We recall  
the formulation of projection formula for integrals of differential forms.
Let $M, N$ be oriented smooth manifolds of dimension $m$, $n$, respectively.
Then $M\times N$ is equipped with the product orientation. 
Let $\pi: M\times N\to N$ be the projection to $N$.
For $\vphi$ an $m$-form on $M\times N$ and $\psi$ an $n$-form on $N$, 
we have {\it projection formula}
$$\int_{M\times N} \vphi\we \pi^*\psi= \int_N (\pi_*\vphi)\psi\,.$$
Here $\pi_* \vphi$ is the function 
$$(\pi_* \vphi)(y)=\int_M \vphi|_{M\times \{y\}}\,.$$
(The precise meaning of the equality is that, if the left hand side is absolutely convergent, then 
the function $\pi_*\vphi$ is measurable, the right hand side is also absolutely convergent,
and the equality holds.)  This formula follows from Fubini's theorem for Lebesgue integrals.

For a $(n+1)$-simplex $\si=[v_0,\cdots, v_{n+1}]$  in $\ga$ with
$v_0<\cdots <v_{n+1}$, we have 
$\tau=[v_2,\cdots,v_{n+1}]$ and for a point $t\in \tau$, we have $\pi_\si^{-1}(t)=[v_0.v_1,t]-[v_0,v_1]$
where  $[v_0,v_1,t]$ is the simplex spanned by the points
$v_0,v_1,t$.
By the projection formula, we have the equality
\begin{align}
\label{Fubini and fiber counting multiplicity}
&\sum_{\sigma}a_\si\int_{\sigma} d \rho_{\epsilon}\we \omega_1
\we \pi_\si^*i^*(  \omega_{n-1})
\\
\nonumber
=&
\sum_{\sigma}\int_{\tau}
\bigg(\int_{[v_0,v_1,t]}a_\sigma d \rho_{\epsilon}\we \omega_1\bigg)
i^*\bigg( \omega_{n-1}\bigg)
\\
\nonumber
=&
\int_{\tau}
\sum_{\sigma}\bigg(\int_{[v_0,v_1,t]}a_\sigma d \rho_{\epsilon}\we \omega_1\bigg)
i^*\bigg(  \omega_{n-1}\bigg).
\end{align}

\begin{lemma}
\label{independence}
We have the equality
\begin{equation}
\label{independent of fiber for linear projection}
\sum_{\sigma }
\int_{[v_0,v_1,t]}a_\sigma 
d\rho_{\epsilon} \we \omega_1
=\sum_{\sigma }
\int_{[v_0,v_1,v_2]} a_\sigma 
d\rho_{\epsilon} \we \omega_1
\end{equation}
for all $t\in \tau$.  
\end{lemma}
Note that we have $\si_f=[v_0,v_1,v_2]$ cf. Notation \ref{front face}.


\begin{figure}[h]
 \includegraphics[width=4cm]{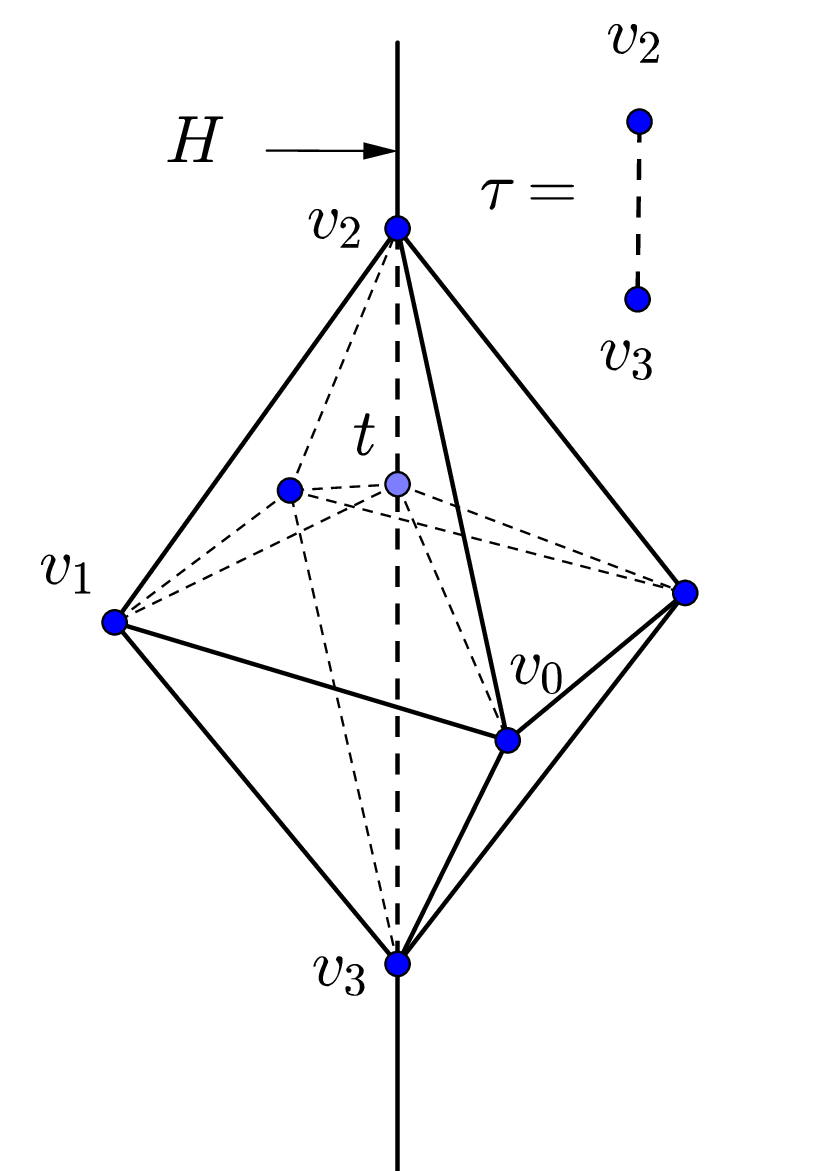}
\end{figure}

\begin{proof}
Let $\sigma$ and $\nu$ be 
an $(n+1)$-simplex
and an $n$-simplex respectively,
such that
$\sigma\succ \nu\succ \tau$.
We set
$\si=[v_0,v_1, v_2, \cdots, v_{n+1}]$, 
$\tau=[v_2, \cdots, v_{n+1}]$ and 
$\nu=[v, v_2, \cdots, v_{n+1}]$. Here the vertex $v$ is $v_0$ or $v_1$. 
For a point $t\in \tau$ we set $\si_t=[v_0,v_1,v_2,t]$
 and $\nu_t=[v,v_2,t]$. Then we have
$[\sigma:\nu]=[\sigma_t:\nu_t]$.
Since $\dim \nu\cap H=\dim \tau=n-1$, $\nu$ is not admissible. Therefore  the coefficient of $\nu$ in $\delta\ga$
is zero by the admissibility of $\delta\ga$. It follows that we have
$$
0=\sum_{\{\sigma|\sigma\succ\nu\}}[\sigma:\nu]a_{\sigma}=
\sum_{\{\sigma|\sigma\succ\nu\}}[\sigma_t:\nu_t]a_{\sigma}.
$$
and obtain the equality
\begin{align}
\label{linking not depend on t}
\sum_\si a_\si\delta \si_t 
=&\sum_\si a_{\sigma}\bigg([v_0,v_1,t]-[v_0,v_1,v_2]\bigg)
+\sum_{\{\nu\mid\nu\succ \tau\}}
\bigg(\sum_{\{\sigma|\sigma\succ\nu\}}[\sigma_t,\nu_t]a_{\sigma}\bigg)\nu_t
\\
\nonumber
=&\sum_\si a_{\sigma}\bigg([v_0,v_1,t]-[v_0,v_1,v_2]\bigg).
\end{align}
The equality
(\ref{linking not depend on t}) implies
the equality
$$
\sum_{\sigma }a_{\sigma}\bigg(
\int_{[v_0,v_1,t]-[v_0,v_1,v_2]}
d\rho_{\epsilon} \we \omega_1
\bigg)
=\sum_{\sigma }a_\si \int_{\delta\si_t}d \rho_{\epsilon}\we \omega_1=0
$$
by the Stokes formula and we finish the proof.
 \end{proof}
Therefore the last line of (\ref{Fubini and fiber counting multiplicity}) is equal to
\begin{align*}
&\int_{\tau}
\sum_{\sigma}\bigg(\int_{[v_0,v_1,t]} a_\si d \rho_{\epsilon}\we \omega_1 \bigg)
i^*\bigg( \omega_{n-1}\bigg)
\\
=&\int_{\tau}
\sum_{\sigma}\bigg(\int_{\si_f} a_\si d \rho_{\epsilon}\we \omega_1 \bigg)
i^*\bigg( \omega_{n-1}\bigg)
\quad(\text{Lemma \ref{independence}})
\\
=&
\bigg(\sum_\si
\int_{\si_f} a_\si d \rho_{\epsilon}\we \omega_1\bigg)
\cdot \int_{
\tau
}
i^*(  \omega_{n-1})\\
\end{align*}
Thus we have proved the assertion.
\end{proof}
Since $\ga=\sum a_\si \si\in AC_{n+1}(K,\bold D^n;\QQ)$, the chain 
$T_\e\cap \ga=\bigg(\sum_\si
\int_{\si_f} a_\si d \rho_{\epsilon}\we \omega_1\bigg)\tau$ is independent of the choice of a Thom cocycle
i.e. the choice of a sufficiently small $\e$ by Proposition \ref{prop face map first properties} (2). 
By Proposition \ref{nearly fiber 2} and Proposition \ref{difference} (1), we have
\begin{proposition}
\label{limit formula}
If  $\dim \tau=n-1$, then for a sufficiently small real number
$\epsilon_0>0$, we have the equality
\[
\begin{array}{l}
\dis \lim_{\epsilon \to 0}\sum_{\sigma}a_{\sigma}\int_{\sigma}d\rho_{\epsilon}  \we \omega_1
  \we \omega_{n-1}
\\\dis = 
\sum_{\si} \biggl(\int_{\si_f}a_\si d\rho_{\epsilon_0} \we  \omega_1\biggr)
\biggl(\int_{\tau} i^*(
 \omega_{n-1})\biggr).
\end{array}
\]
Here the sum is taken over the $(n+1)$-simplexes of $\ga$.

\end{proposition}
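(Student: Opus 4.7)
The plan is to combine the three main tools already assembled in this subsection: the approximation estimate of Proposition \ref{difference}, the projection formula of Proposition \ref{nearly fiber 2}, and the independence of the Thom cap product from the choice of Thom cocycle (Proposition \ref{prop face map first properties} (2)). The assumption $\dim \tau = n-1$ is used so that $\si_b = \tau$ for every $(n+1)$-simplex $\si$ contributing to $\ga = \ga^{(\tau)}$, and so that the pull-back $\pi_\si^* i^* \omega_{n-1}$ is a genuine smooth top-degree form on a neighborhood of $\tau^\circ$ (this is where facewise regularity of the embedding $\Phi_K$ enters).

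First, I would write, for each $(n+1)$-simplex $\si$ in $\ga$,
\[
\int_\si d\rho_\epsilon \we \omega_1 \we a_\si \omega_{n-1}
= \int_\si d\rho_\epsilon \we \omega_1 \we \pi_\si^* i^*(a_\si \omega_{n-1})
+ R_\si(\epsilon),
\]
where $R_\si(\epsilon)$ is the integral of $d\rho_\epsilon\we\omega_1$ against the difference $a_\si\omega_{n-1}-\pi_\si^* i^*(a_\si\omega_{n-1})$. Summing over $\si$ and passing to the limit, Proposition \ref{difference} (1) shows $\sum_\si R_\si(\epsilon) \to 0$ as $\epsilon \to 0$. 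Thus the limit on the left-hand side of the desired equality equals
\[
\lim_{\epsilon \to 0}\sum_{\sigma}\int_{\sigma} d\rho_\epsilon \we \omega_1 \we \pi_\si^* i^*(a_\si\omega_{n-1}).
\]
Then Proposition \ref{nearly fiber 2} identifies the integrand-summed expression, for every (small) $\epsilon>0$, with
\[
\Bigl(\sum_{\sigma} a_\si \int_{\si_f} d\rho_\epsilon \we \omega_1\Bigr)\cdot \int_\tau i^*(\omega_{n-1}).
\]

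The final step is to show that the $\epsilon$-dependence of the surviving factor drops out, so that $\epsilon$ can be replaced by any fixed small $\epsilon_0$. For this I invoke Proposition \ref{prop face map first properties} (2): the $2$-form $T_\epsilon = d\rho_\epsilon \we \omega_1$ is a $\CC$-valued Thom cocycle of $H = H_{1,0}$ for all sufficiently small $\epsilon$, and the cap product $T_\epsilon \cap \ga$ is independent of the choice of such a Thom cocycle. Under the formula $T_\epsilon \cap \ga = \bigl(\sum_\si a_\si \int_{\si_f} T_\epsilon\bigr)\,\tau$ (valid because $\si_b = \tau$ and $\tau$ is a full subcomplex-face), this independence is exactly the statement that the coefficient $\sum_\si a_\si \int_{\si_f} d\rho_\epsilon\we\omega_1$ takes the same value for all sufficiently small $\epsilon$. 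Taking the limit therefore yields the value at any fixed $\epsilon_0$, which completes the proof.

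The only real subtlety, and the step I would spend the most care on, is the first one: justifying termwise that the ``difference'' contribution $\sum_\si R_\si(\epsilon)$ vanishes in the limit. This is precisely Proposition \ref{difference} (1), whose proof (deferred to \S \ref{subsec:proof of appriximation}) is the analytic heart of the argument — everything else here is a clean algebraic packaging via the projection formula and the Thom-cocycle independence.
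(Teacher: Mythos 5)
Your proposal is correct and follows essentially the same route as the paper: the paper likewise deduces Proposition \ref{limit formula} by combining Proposition \ref{difference} (1) (to discard the difference term in the limit), Proposition \ref{nearly fiber 2} (to factor the integral via the projection formula), and Proposition \ref{prop face map first properties} (2) applied to the Thom form $T_\epsilon=d\rho_\epsilon\we\omega_1$, whose cap product with $\gamma$ is $\bigl(\sum_\si a_\si\int_{\si_f}d\rho_\epsilon\we\omega_1\bigr)\tau$ and hence independent of sufficiently small $\epsilon$. No gaps to report.
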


The equality (\ref{first goal}) follows from Proposition \ref{limit formula}.

\subsubsection{Proof of Proposition \ref{difference}}
\label{subsec:proof of appriximation}
We prove the following proposition from which
Proposition \ref{difference} follows.
\begin{proposition}
\label{nearly fiber 1} 
Let $\si$ be an $(n+1)$-simplex  in $\gamma$, and let $\varphi$ be a smooth $(n-1)$-form on a neighborhood of $\si$. 
\begin{enumerate}
\item When $\e$ is sufficiently small, the integral
$\int_{\sigma} d\rho_{\epsilon} \we\omega_1
\we\big(
\varphi
-
\pi_\si^*i^*\varphi\big)$
converges absolutely.

\item We have the equality
\begin{align}
&\lim_{\epsilon \to 0}
\int_{\sigma} d\rho_{\epsilon} \we \omega_1\we 
\big(
\varphi
-
\pi_\si^*i^*\varphi\big)=0.
\end{align}
\end{enumerate}

\end{proposition}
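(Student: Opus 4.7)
The plan is to introduce join coordinates on $\si$ adapted to the decomposition $\si=\tau*\tau'$, reduce the wedge $d\rho_\eps\we\omega_1\we(\vphi-\pi_\si^*i^*\vphi)$ to its ``horizontal'' component, and bound that component by the first-order vanishing of $\vphi-\pi_\si^*i^*\vphi$ on $\tau$.

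Writing $\si=[v_0,v_1,v_2,\dots,v_{n+1}]$ with $\tau=[v_2,\dots,v_{n+1}]$ and $\tau'=[v_0,v_1]$, I introduce coordinates $(s,t,\mu)$ on $\si\setminus\tau'$ by $t=\la_0+\la_1$, $s=\la_0/t$, and $\mu_i=\la_i/(1-t)$ for $i\geq 2$, where $\la_i$ are barycentric. Then $\pi_\si(s,t,\mu)=\mu$, the face $\tau$ corresponds to $\{t=0\}$, and $z_1(s,t,\mu)=t\,\zeta(s)$ with $\zeta(s)=sz_1(v_0)+(1-s)z_1(v_1)$. Since $\si\cap H=\tau$ by hypothesis, one has $\tau'\cap H=\emptyset$, so the line segment $\{\zeta(s):s\in[0,1]\}$ in $\CC^*$ avoids the origin, giving a uniform bound $|\zeta(s)|\ge c_0>0$. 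Consequently $|z_1|=t|\zeta(s)|$ is comparable to $t$ on $\si$.

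Next, I expand $\vphi$ in the coordinate basis as
\[
\vphi=\vphi_\mu+\vphi_s\we ds+\vphi_t\we dt+\vphi_{st}\we ds\we dt,
\]
where each $\vphi_?$ is a smooth form in the $d\mu$'s only. Since $\pi_\si$ annihilates $ds,dt$ and fixes $d\mu$, we have $\pi_\si^*i^*\vphi=\vphi_\mu(0,0,\mu)$; here the $s$ entry is immaterial because all points $(s,0,\mu)$ collapse to $\mu\in\tau$, whence $\vphi_\mu(s,0,\mu)$ is independent of $s$. As $z_1$ depends only on $(s,t)$, we may write $d\rho_\eps\we\omega_1=g_\eps(s,t)\,ds\we dt$; a direct computation with $r=t|\zeta(s)|$, $\theta=\arg\zeta(s)$ yields $|g_\eps|\le C/\eps$ with support in $\{t\in[\eps/(2|\zeta|),\eps/|\zeta|]\}$. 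Since $d\rho_\eps\we\omega_1$ already carries $ds\we dt$, only the $d\mu$-component of $\vphi-\pi_\si^*i^*\vphi$ survives, and
\[
d\rho_\eps\we\omega_1\we(\vphi-\pi_\si^*i^*\vphi)=g_\eps(s,t)\bigl[\vphi_\mu(s,t,\mu)-\vphi_\mu(0,0,\mu)\bigr]\,ds\we dt\we d\mu.
\]

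For assertion (1), once $\eps/c_0<1$ the support of $g_\eps$ lies in a compact subset of $\{0<t<1\}$ disjoint from $\tau'$, on which $\pi_\si^*i^*\vphi$ and all remaining factors are smooth and bounded; absolute integrability is immediate. For assertion (2), smoothness of $\vphi_\mu$ together with its $s$-independence at $t=0$ gives $|\vphi_\mu(s,t,\mu)-\vphi_\mu(0,0,\mu)|\le C't$ by a first-order Taylor estimate. Hence the integrand is bounded by $(C/\eps)\cdot C't$ on the support; integrating over $t\in[\eps/(2|\zeta|),\eps/|\zeta|]$ produces an extra factor of order $\eps^2/|\zeta|^2$, yielding $\int_\si d\rho_\eps\we\omega_1\we(\vphi-\pi_\si^*i^*\vphi)=O(\eps)\to0$. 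The main obstacle is the uniform bound $|\zeta(s)|\ge c_0$, which encodes the hypothesis $\si\cap H=\tau$; without it the fiber coordinate $t$ and the modulus $|z_1|$ would decouple and the whole estimate would fail.
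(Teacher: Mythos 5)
There is a genuine gap, and it is the very first step: the identity $z_1(s,t,\mu)=t\,\zeta(s)$ with $\zeta(s)=s\,z_1(v_0)+(1-s)\,z_1(v_1)$ presumes that $z_1$ restricted to $\si$ is the affine interpolation of its vertex values, i.e.\ that $\si$ is a \emph{linear} simplex in the coordinate $z_1$. In the paper's setting $\si$ is only the image of a linear simplex under the semi-algebraic homeomorphism $\Phi_K$ of a good triangulation (facewise regular embedding), so $z_1|_\si$ is merely a continuous semi-algebraic function that vanishes exactly on $\tau=\si\cap H$; it depends on $\mu$ as well as on $(s,t)$, and it may vanish to a non-linear (and non-uniform) order as $t\to 0$. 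Consequently the reduction $d\rho_\eps\we\omega_1=g_\eps(s,t)\,ds\we dt$, the bound $|g_\eps|\le C/\eps$, and the localization of the support to a band $t\in[\eps/(2|\zeta|),\eps/|\zeta|]$ of width $O(\eps)$ are all unjustified, and with them the whole $O(\eps)$ estimate. (The comparability $|z_1|\asymp t$ is exactly what fails for a curvilinear semi-algebraic simplex; only the qualitative statement that $\{|z_1|\le\eps\}\cap\si$ shrinks to $\tau$ survives.) A second, related gap is the Taylor estimate $|\vphi_\mu(s,t,\mu)-\vphi_\mu(0,0,\mu)|\le C't$: this needs $C^1$ control of $\vphi$ \emph{in the join coordinates up to the face} $t=0$, i.e.\ uniform bounds on first derivatives of $\Phi_K$ across faces, whereas facewise regularity gives smoothness only on each open face and mere continuity up to the boundary. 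For the same reason your argument for (1) is incomplete: on the set obtained by deleting a neighborhood of $\tau'$ the form $\pi_\si^*i^*\vphi$ is smooth only on $\si^\circ$, and its coefficients may blow up near the remaining boundary faces, so boundedness there cannot be asserted.

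The paper's proof is designed precisely to avoid any rate of vanishing or boundary regularity. For (1) it invokes Theorem 2.6 of Part I (absolute convergence of $\int_S|h^*\psi|$ for a continuous semi-algebraic map $h$ on a compact semi-algebraic set and a smooth form $\psi$), after observing, as you do, that for small $\eps$ the integrand is supported away from $\tau'$. For (2) it slices along $z_1=\zeta_1$ and proves (Lemma \ref{lemma:bounding after integral}) that $\int_{\si(\zeta_1)}|\vphi-\pi_\si^*i^*\vphi|\to 0$ as $\zeta_1\to 0$ outside a one-dimensional semi-algebraic bad set, using Proposition 2.7 of Part I together with a compactness argument that only needs continuity and the vanishing of $\vphi-\pi_\si^*i^*\vphi$ on $\tau$ (no Lipschitz rate); it then concludes from $\int_\CC d\rho_\eps\we\omega_1=1$ and the fact that $d\rho_\eps\we\omega_1$ is supported in $|z_1|\le\eps$. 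To repair your argument you would either have to restrict to triangulations that are linear in the ambient coordinates (which the paper does not assume and cannot arrange while keeping $\bold D^n$ and the cubical faces as subcomplexes of a semi-algebraic triangulation), or replace your quantitative band estimates by semi-algebraic convergence results of the type quoted from Part I.
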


\begin{proof}[Proof of Proposition \ref{nearly fiber 1}]
The form $\vphi$ is a sum of the forms $f\, du_1\we\cdots\we du_{n-1}$, where 
$u_i$ are from the set $\{x_1, y_1, \cdots, x_n, y_n\}$ (we denote $z_j=x_j+iy_j$), and $f$ is a smooth function.
One may thus assume $\vphi=f\, du_1\we \cdots\we du_{n-1}$.

(1)
We wish to apply \cite{part I} Theorem 2.6, which reads as follows:  
Let $S$ be a compact semi-algebraic set of dimension $m$, 
$h: S\to \RR^\ell$ be a continuous semi-algebraic map, and $\psi$ be a smooth $m$-form
defined on an open set of $\RR^\ell$ containing $h(S)$. Then the integral
$\int_S |h^*\psi|$
is convergent. 

It is useful to note that differential forms on $S$ of the form $h^*\psi$, with $h:S\to \RR^\ell$ continuous 
semi-algebraic, and $\psi$ a smooth $p$-form
($0\le p \le m=\dim S$) are closed under wedge product. Indeed, if 
$h': S\to \RR^{\ell'}$ is another continuous semi-algebraic map, and $\psi'$ a smooth $p'$-form
on an neighborhood of  $h'(S')$, then $(h^*\psi)\we ({h'}^* \psi')$ equals 
the pull-back by the product map $(h, h'): S\to \RR^\ell\times \RR^{\ell'}$ of 
the smooth form $(p_1^*\psi)\we(p_2^*\psi')$ defined on a neighborhood of $(h, h')(S)$
in $\RR^{\ell+\ell'}$.  

In order to show the absolute
convergence of $\int_\sigma d\rho_\eps \we\omega_1 \we\pi_\sigma^*i^*\vphi$, 
let $S$ be the compact semi-algebraic set obtained from $\si$ by removing a 
small neighborhood of $\tau'$, and note that the integral in question equals
$\int_S d\rho_\eps\we \omega_1\we \pi_\sigma^*i^*\vphi$ for   $\e$ sufficiently small. 
We consider the projection
$\pi_\si: \si-\tau'\to \tau\subset H$
restricted to $S$, 
$$\pi_\si:S\to \tau\subset H\,,$$
and the smooth form $i_U^*\varphi$ defined on a neighborhood of $\tau$; then $\pi_\si ^*i_U^*\varphi$ is a form of the above-mentioned shape $h^*\psi$. 
Also, pull-back by the inclusion $S\injto P^n$ of the smooth form $d\rho_\eps \we\omega_1$
gives us another form of the shape $h^*\psi$.
Thus the wedge product of them, $d\rho_\eps\we \omega_1\we \pi_\sigma^*i^*\vphi$, 
is also a form of the same kind, and we conclude absolute convergence
of $\int_S d\rho_\eps\we \omega_1\we \pi_\sigma^*i^*\vphi$ 
by the theorem we recalled. 

Similarly (and more easily)  the absolute convergence of $\int_\sigma d\rho_\eps\we \omega_1\we\vphi$ is obtained by applying
the same theorem to the inclusion $\si \injto P^n$ and the smooth form 
$d\rho_\eps\we \omega_1\we \vphi$.

(2).  We need the following lemma.
\begin{lemma}
\label{lemma:bounding after integral}
For a complex number $\zeta_1$, we set 
$\sigma(\zeta_1)=\si\cap \{z_1=\zeta_1\}$.  There exists a closed semi-algebraic set $C$ of
$\C$ of dimension $\leq 1$ for which the equality 
\begin{equation}
\label{inequality for fixed r}
\underset{|\zeta_1|\to 0,\,\zeta_1 \notin C}\lim \int_{\sigma(\zeta_1)}
|\varphi-\pi_\si^*i^*\varphi|=0.
\end{equation}
holds.
\end{lemma}

\begin{proof}[Proof of Lemma \ref{lemma:bounding after integral}]
By Semi-algebraic triviality of semi-algebraic maps as stated in Theorem 9.3.2, \cite{BCR},  there exists a closed semi-algebraic
set $C$ of $\C$ of dimension $\leq 1$ such that if $\zeta_1\not\in C$, then the inequality
$\dim \si(\zeta_1)\leq \dim \si-2$ holds.  We have an equality 
\begin{equation}
\label{first equality}
\begin{array}{ll}
& \varphi-\pi_\si^*i^*\varphi\\
=&f
du_1\we\cdots\we  du_{n-1}
-
 \pi_\si^*i^*(f 
du_1\we\cdots\we du_{n-1})
\\
=&
\big(f-\pi_\si^*i^*f\big) 
du_1\we\cdots \we du_{n-1}
\\
&+\sum_{k=1}^{n-1}
\pi_\si^*i^*f

du_1\wedge\cdots \wedge du_{k-1}
\wedge \big( 
du_k-
\pi_\si^*  i^* du_k \big)\wedge
\pi_\si^*i^*(
du_{k+1}\wedge\cdots \wedge du_{n-1}
%
)
\end{array}
\end{equation}
We estimate the integral of the first term on the right hand side of (\ref{first equality}). Let $g$ be the map defined by 
$$
\sigma \to \CC\times \RR^{n-1}:z\to (z_1,u_i).
$$ 
By Proposition 2.7 of \cite{part I}, we have the inequality
\[\bigg|\int_{\sigma(\zeta_1)}
(f-\pi_\si^*f|_\tau) \we
du_1\we\cdots \we du_{n-1}
\bigg|
\leq \underset{\si(\zeta_1)}{{\rm Max}}
|f-\pi_\si^*i^*f|\delta(g)\int_{g(\si(\zeta_1))}|
du_1\we\cdots \we du_{n-1}
|\]
Here $\delta(g)$ denotes the maximal of the cardinalities of finite
fibers of $g$.
For the precise definition, see Definition 2.2. of \cite{part I}.
Note that $\delta(g|_{\si(\zeta_1)})\leq
\delta(g)$.

\vskip 0.5cm

{\bf Claim.}\quad $\Max_{\sigma(\zeta_1)}|f-\pi_\si^*i^*f| \to 0$ as $\zeta_1$ tends to 0
(outside $C$). 
\begin{proof}[Proof of the claim] The function $f-\pi_\si^*i^*f$ is continuous semi-algebraic on $\sigma(\zeta_1)$, and 
vanishes on $\tau$. 
If the claim were false, there exists an $\eps>0$ and a sequence $P_j\in\si$ with $|z_1(P_j)|\to 0$
and  $|(f-\pi_\si^*i^*f)(P_j)|\ge \eps$.
Taking a subsequence we may assume that the sequence converges to a point $P\in \si$.
Then $z_1(P)=0$, thus $P\in \tau $, while $|(f-\pi_\si^*i^*f)(P)|\ge \eps$, 
contradicting the function $f-\pi_\si^*i^*f$ being zero on $\tau$. 
\end{proof}
The integral $\displaystyle\int_{g(\si(\zeta_1))}|\we_i du_i|$ is bounded 
by the volume of $g(\sigma)$ which is independent of $\zeta_1$.
We conclude that
 the integral of the first term on the right hand side of (\ref{first equality}) converges to zero as
$|\zeta_1|\to 0$. 

We estimate the integral of the second term of the right hand side of (\ref{first equality}). 
Let $h$ be the map defined by 
$$
\sigma \to \CC\times \R^{n-1}:
z\mapsto (z_1,v_i)=(z_1,u_1,
\dots, u_{k-1},u_k-\pi_\si^*i^*u_k,\pi_\si^*i^*u_{k+1},
\dots \pi^*_\si i^*u_{n-1}).
$$
By Proposition 2.7 of \cite{part I}, we have the inequality
\[
\begin{array}{l}
\dis \int_{\si(\zeta_1)}|\pi_\si^*i^*f(z)
\we
du_1\we\cdots \we du_{k-1}
\we \big( 
du_k-
\pi_\si^*  i^* du_k \big)\we
\pi_\si^*i^*(
du_{k+1}\we\cdots \we du_{n-1})
|\\
\dis \leq \underset{\si(\zeta_1)}{\rm Max}|\pi_\si^*i^*f|\delta(h|_{\sigma(\zeta_1)})\int_{h(\si(\zeta_1))}
|
dv_1\we\cdots \we dv_{n-1}
|
\end{array}\]
where $v_1,\cdots,v_{n-1}$ are the coordinates of $\R^{n-1}$. 
 Note that $\delta(h|_{\si(\zeta_1)})$ is bounded by $\delta(h)$ which
 is independent of $\zeta_1$. The function $|\pi_\si^*i^*f|$ is bounded by
$\max\{|f(z)|\mid z \in \tau\}$. 
By the same proof as for the Claim, $M_{\zeta_1}:=\Max_{\sigma(\zeta_1)}|u_k-\pi_\si^*i^*u_k|$ tends to 
zero as $|\zeta_1|\to 0$.
 There exist numbers $a<b$ such that
$$\begin{array}{cl}
u_i(\si)\subset [a, b] &\text{for $1\le i\le k-1$, and}  \\
\pi_\si^* i^*u_i(\si)\subset [a, b] &\text{for $k+1\le i\le n-1$}\,,
\end{array}
$$
thus 
$$h(\sigma(\zeta_1))\subset [a, b]^{k-1}\times [-M_{\zeta_1}, M_{\zeta_1}]\times [a, b]^{n-1-k}\,,$$
hence 
$\displaystyle\int_{h(\sigma(\zeta_1))} dv_1\we\cdots \we dv_{n-1}\to 0$.
\end{proof}

We go back to the proof of Proposition \ref{nearly fiber 1} (2).  One has 
$$\int_\CC d\rho\we \omega_1=1\,,$$
as follows from the identity
$$d\rho \we\frac{dz_1}{z_1}=i\rho'(r_1)dr_1 \we d\theta_1\,.$$
For the form $d\rho_\eps(z_1)\wedge \omega_1$,  the change of variables $z_1'=z_1/\eps$
yields
$$d\rho_\eps(z_1)\wedge \omega_1 = d\rho(z'_1)\wedge \frac{1}{2\pi i} \frac{dz'_1}{z'_1}\,.$$
So we have 
$$\int_\CC d\rho_\eps\wedge \omega_1=1\,.$$
Also, $d\rho_\eps\wedge \omega_1$ has support in $|z_1|\le \eps$.
Therefore

\begin{align*}
&\left| \int_\sigma d\rho_\eps\wedge \omega_1 \wedge (\vphi- \pi_\si^*i^*\vphi)\right|
\le \underset{|\zeta_1|\le\eps,\,\zeta_1\notin C}\Max(\int_{\sigma(\zeta_1)} |\vphi- \pi_\si^*i^*\vphi| )\int_\CC d\rho_\eps\wedge \omega_1 \\
&=\underset{|\zeta_1|\le\eps,\,\zeta_1\notin C}\Max(\int_{\sigma(\zeta_1)} |\vphi- \pi_\si^*i^*\vphi| )
\end{align*}

and the assertion follows from Lemma \ref{lemma:bounding after integral} .
\end{proof}

\subsection{Proof of Proposition \ref{cauchy not contained in D}}
\label{limit argument}
We prove 
Proposition \ref{cauchy not contained in D} assuming 
Proposition \ref{prop:Cauchy formula for codimension one}
by a limit argument.
Let $\epsilon$ be a  positive real number
and for $i=1,\cdots, n$, set $z_i^{(0)}=z_i$ and $z_i^{(\infty)}=z_i^{-1}$. 
We define a neighborhood $N_{\epsilon}$ of $\bold H_h$ by
\begin{align*}
N_{\epsilon}&=\underset{\substack{1\leq i< i'\leq n,\\ \alpha\in \{0,\infty\} ,\beta\in \{0,\infty\}}}{\bigcup}
\{z\in P^n\mid |
z_i^{(\alpha)}|\leq \epsilon,
|z_{i'}^{(\beta)}|\leq \epsilon\}.
\end{align*}
We set $N^*_{\epsilon}=\overline{P^n-N_\epsilon}$.

By the semi-algebraic triviality of the semi-algebraic maps (\cite{BCR}, Theorem 9.3.2), the following
 {\bf condition (P)} holds when $\e$ is sufficiently small. 

\vskip 0.3cm

 (P)\quad For each simplex $\si$ of $K$  and each pair $(i,\al)$, 
we have  $\dim \si\cap \{|z_i^{(\al)}|=\e\}\leq \dim \si-1$ if the set  $ \si\cap \{|z_i^{(\al)}|=\e\}$
is not empty. Here if the dimension of a set is negative, then the set must be empty.

\vskip 0.3cm

In the rest of this section and  \S \ref{subsubsection:contained in D} we assume that $\e$
satisfies the condition (P). Let $K_\e$ be a good subdivision of $K$ such that $N_\e$ and $N_\e^*$ are subcomplexes of $K_\e$.
We denote by $\la$ the subdivision operator from $C_\bullet(K)$ to $C_\bullet(K_\e)$.
Note that by the condition (P) we have the following.  If $\si$ is a simplex of $K$ and $\la(\si)=
\sum\si'$, 
then each  $\si'$ is contained in one of $\{N_\e,\,N_\e^*\}$ exclusively.  
 
\begin{definition}
\begin{enumerate}
\item 
Let $\si$ be a $m$-simplex in $K$ and we set $\la(\si)=\sum\si'$. 
 We put $\si_{\geq\e}=\underset{\si'\subset N_\e^*}\sum \si'$. 
 For an element 
$\ga=\sum_{\sigma }  a_\si \si$ in 
$C_m(K,\bold D^n;\QQ)$, we set \newline $\ga_{\geq\e}=\sum_{\sigma }  a_\sigma\sigma_{\geq\epsilon} $.
\item We put $\si_{=\e}=
\delta(\si_{\geq\e})-(\delta\si)_{\geq \e}$. 

\item For an element 
$\ga=\sum_{\sigma }  a_\si \si$ in 
$C_m(K,\bold D^n;\QQ)$,
we set 
$\ga_{=\e}=\sum_{\sigma }  a_\sigma \sigma_{=\epsilon}.$

\end{enumerate}
\end{definition}
\begin{lemma}
\label{lem: admiss}
Let $\e$ be a positive number which satisfies the condition (P). 
\begin{enumerate}

\item Let $\si$ be an $m$-simplex in $K$. We have $|\si_{=\e}|\subset N_\e^*\cap N_\e$.

\item Let $\sigma$ be an admissible $m$-simplex in $K$.
Then the chain  
$\sigma_{=\epsilon}$ is admissible.
\item Let $\ga=\sum  a_\si\si$ be an element of $AC_m(K,\bold D^n; \QQ)$. We have $(\part \ga)_{\geq \e}=\part(\ga_{\geq \e})$.

\end{enumerate}

\end{lemma}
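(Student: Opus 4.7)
I will handle the three parts in order. For (1), I would introduce the complementary piece $\si_{\leq\e} := \la(\si) - \si_{\geq\e}$, the sum of those $K_\e$-simplexes of $\la(\si)$ contained in $N_\e$ (well-defined by the exclusivity stated just after the definition of $K_\e$). Since $\la$ commutes with $\delta$, the two natural decompositions of $\la(\delta\si)$ must agree:
\[
\delta(\si_{\geq\e}) + \delta(\si_{\leq\e}) \;=\; \la(\delta\si) \;=\; (\delta\si)_{\geq\e} + (\delta\si)_{\leq\e}.
\]
Rearranging yields the dual representations
\[
\si_{=\e} \;=\; \delta(\si_{\geq\e}) - (\delta\si)_{\geq\e} \;=\; (\delta\si)_{\leq\e} - \delta(\si_{\leq\e}),
\]
of which the first has support in $N_\e^*$ and the second in $N_\e$, so $|\si_{=\e}|\subset N_\e \cap N_\e^*$.

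For (2), I fix a codimension-$k$ cubical face $H = H_{i_1,\al_1}\cap\cdots\cap H_{i_k,\al_k}$. Since $K$ is a good triangulation, $\tau := \si\cap H$ is a simplicial face of $\si$, and admissibility of $\si$ gives $\dim\tau \leq \dim\si - 2k$. The key geometric observation is that $H \cap \{|z_j^{(\be)}|=\e\} = \emptyset$ whenever $j \in \{i_1,\dots,i_k\}$: on $H$ the coordinate $z_j$ is fixed to $\al_\ell \in \{0,\infty\}$, giving $|z_j^{(\be)}| \in \{0,\infty\}$, incompatible with $\e \in (0,1)$. Combining with (1),
\[
|\si_{=\e}| \cap H \;\subset\; \bigcup_{\substack{j\notin\{i_1,\dots,i_k\} \\ \be\in\{0,\infty\}}} \tau \cap \{|z_j^{(\be)}|=\e\},
\]
and applying condition (P) to the simplex $\tau$ bounds each piece by $\dim\tau - 1$. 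Hence $\dim(|\si_{=\e}|\cap(H-\bold D^n)) \leq \dim\tau - 1 \leq \dim\si - 1 - 2k = \dim\si_{=\e} - 2k$, which is precisely the admissibility of $\si_{=\e}$.

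For (3), by linearity of $\part = \sum_i (-1)^{i-1}(\part_{i,0} - \part_{i,\infty})$ it suffices to prove $(\part_{i,\al}\ga)_{\geq\e} = \part_{i,\al}(\ga_{\geq\e})$ for each $(i,\al)$. Choose a Thom cocycle $T'$ and compatible good ordering $\co'$ on $K_\e$. By Proposition \ref{prop face map first properties}(3) the face map commutes with subdivision, so
\[
\la(\part_{i,\al}\ga) \;=\; \part_{i,\al}(\la\ga) \;=\; \part_{i,\al}(\ga_{\geq\e}) + \part_{i,\al}(\ga_{\leq\e}),
\]
while tautologically $\la(\part_{i,\al}\ga) = (\part_{i,\al}\ga)_{\geq\e} + (\part_{i,\al}\ga)_{\leq\e}$. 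The cap-product formula $T' \overset{\co'}{\cap}[v_0,\dots,v_m] = T'([v_0,v_1,v_2])[v_2,\dots,v_m]$ shows each resulting chain element is a subsimplex of its parent, so $\part_{i,\al}(\ga_{\geq\e})$ is supported in $N_\e^*$ and $\part_{i,\al}(\ga_{\leq\e})$ in $N_\e$; by condition (P) no $K_\e$-simplex lies in both regions, forcing both decompositions to match termwise. The main obstacle is part (3): the argument hinges on invoking the nontrivial independence of the face map from the choice of Thom cocycle and ordering (Proposition \ref{prop face map first properties}(3)) and on the subsimplex behavior of the cap product, which together pin down $\part_{i,\al}$ of a subchain in $N_\e^*$ to the corresponding subregion.
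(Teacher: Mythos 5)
Parts (1) and (2) are fine. Your proof of (1) is the paper's own argument: introduce $\si_{\leq\e}$, compare the two decompositions of $\la(\delta\si)$, and read off the two representations of $\si_{=\e}$. Your proof of (2) is correct in substance but takes a more uniform route than the paper: you treat every cubical face $H$ of codimension $k$ at once, via $|\si_{=\e}|\cap H\subset\tau\cap(N_\e\cap N_\e^*)\subset\bigcup_j\tau\cap\{|z_j^{(\be)}|=\e\}$ with $\tau=\si\cap H$ and condition (P) applied to the $K$-simplex $\tau$, whereas the paper reduces, by a case analysis on $\si\cap\bold H^n$ (inside $\bold D^n$, inside $\bold H_h$, or inside a unique codimension-one face), to a single codimension-one face. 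Your version is arguably cleaner; the only caveat is that admissibility of $\si$ bounds $\dim(\si\cap(H-\bold D^n))$, not $\dim\tau$, so your inequality $\dim\tau\le m-2k$ can fail when $\tau\subset\bold D^n$ — but in that case $|\si_{=\e}|\cap(H-\bold D^n)=\emptyset$ and there is nothing to prove, so this is only a missing (trivial) case distinction.

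Part (3) has a genuine gap. The sentence ``by condition (P) no $K_\e$-simplex lies in both regions'' is false: condition (P) only prevents a piece of $\la(\eta)$ of the \emph{same} dimension as the $K$-simplex $\eta$ from lying in a hypersurface $\{|z_j^{(\be)}|=\e\}$, while plenty of lower-dimensional simplexes of $K_\e$ lie in $N_\e\cap N_\e^*$ — indeed this intersection is itself a subcomplex of $K_\e$. The chains you are comparing are made of $(m-2)$-simplexes, which are proper faces of the subdivided $m$-simplexes, so the exclusivity you invoke is exactly what must be proved, not quoted; without it, the difference $\part_{i,\al}(\ga_{\geq\e})-(\part_{i,\al}\ga)_{\geq\e}$ is only seen to be supported on $N_\e\cap N_\e^*$, and could a priori be a nonzero chain there. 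The repair is to localize, as the paper does: by the good ordering and the fullness of $K_\e\cap H_{i,\al}$, a term $T'\overset{\co'}{\cap}\si'$ with nonzero coefficient is supported in $\si'\cap H_{i,\al}\subset\si\cap H_{i,\al}$ (not merely ``in its parent $\si'$''); then, when $\dim(\si\cap H_{i,\al})=m-2$ and $\si\cap H_{i,\al}\not\subset\bold D^n$, condition (P) applied to the $K$-simplex $\si\cap H_{i,\al}$ shows that no $(m-2)$-simplex of $\la(\si\cap H_{i,\al})$ lies in $N_\e\cap N_\e^*$, which is what forces the two decompositions to agree; when $\dim(\si\cap H_{i,\al})\le m-3$ all coefficients vanish, because the front face $[v_0,v_1,v_2]$ misses $H_{i,\al}$ and $T'$ is relative to $W'$; and when $\si\cap H_{i,\al}\subset\bold D^n$ the terms vanish in the relative complex. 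This is precisely the case analysis in the paper's proof of (3), and your argument does not become complete until it is supplied.
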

\begin{proof}

(1). Let $\la(\si)=\sum \si'$ and we put $\si_{\leq \e}=\underset{\si'\subset N_\e}\sum \si'$. 
By condition (P), we have $\la(\si)=\si_{\geq\e}+\si_{\leq\e}$ and $\delta(\la( \si))=\la(\delta\si)=(\delta\si)_{\geq\e}+(\delta\si)_{\leq\e}$.
Let $\si_{\e}^-=\delta (\si_{\leq\e})-(\delta\si)_{\leq\e}$.  Then we have 
$\si_{=\e}+\si_{=\e}^-=0$.  Since $|\si_{=\e}|\subset N_\e^*$ and $|\si_{=\e}^-|\subset N_\e$, we have the assertion.

(2). We set $\tau=\si\cap \bold H^n$. If $\tau \subset \bold D^n$, then $\si_{=\e}$ is admissible
by definition. We assume that $ \tau \not \subset \bold D^n$. If $\tau \subset \bold H_h$, then $|\sigma_{=\epsilon}|\cap \bold H^n=\emptyset$
by the definition of $N_{\epsilon}^*$. 
If $ \tau \not\subset \bold H_h$, then there is a unique 
codimension one face $H_{i,\alpha}$
such that $\tau \subset H_{i,\alpha}$.  Since $|\sigma_{=\epsilon}|\cap \bold H_h=\emptyset$, $\sigma_{=\epsilon}$
does not meet other cubical face than $H_{i,\al}.$ Hence it suffices to show that $\sigma_{=\epsilon}$ meets
$H_{i,\al}$ properly. 
By the admissibility of $\si$ we have
$\dim \si \cap H_{i,\alpha}\leq m-2. $ 
 Since the set $\tau=\si \cap H_{i,\al}$ is a face of $\si$,  we have the inequality 
$\dim \tau \cap \{|z_{i'}^{(\alpha')}|=\e\}\leq m-3$
for any $(i',\alpha')$ by the condition (P).
By the assertion (1) we have 
$|\sigma_{=\epsilon}|\cap H_{i,\alpha}\subset \si\cap H_{i,\alpha}\cap \bigl(\underset{(i',\al')}\cup\{|z_{i'}^{(\alpha')}|=\e\}\bigr)$.
Therefore we have $\dim(|\sigma_{=\epsilon}|\cap H_{i,\alpha}) \leq m-3$.

(3).   We denote the cubical face $\{z_1=0\}$ by $H$. 
Let $\si$ be a $m$-simplex in $\ga$. We show that $\part(\si_{\ge \e})
=(\part\si)_{\ge \e}$. We denote the simplex $\si\cap H$ by $\tau$. 
We set $\la(\si) =\sum \si'$ and $\la(\tau)=\sum \tau'$. Then we have $\dis \si_{\geq \e}=\sum_{\si' \subset N_\e^*} \si'.$ 
  Let $T$ be a Thom cocycle for $K_\e\cap H$
and $\cal O$ be a good ordering of $K_\e$ with respect to $H$. 
We set $T\overset{\cal O} \cap \la(\si)=\sum  b_{\tau'}\tau'$ and $T\overset{\cal O} \cap
(\si_{\geq \e})=\sum c_{\tau'}\tau'$. We have $\dis  b_{\tau'}\tau'=\sum_{\si'\cap H=\tau'}
T\overset{\cal O} \cap \si'$ and $\dis  c_{\tau'}\tau'=\sum_{\si'\cap H=\tau',
\, \si'\subset N_\e^*} T\overset{\cal O} \cap \si'$. 

Suppose first that $\tau$ is a $(m-2)$-simplex which is not contained in $\bold D^n$.  
We will show that for each $(m-2)$-simplex $\tau'$ contained
in $N_\e^*$, we have $b_{\tau'}=c_{\tau'}.$ Such a $\tau'$ is not contained in $N_\e$  by the condition (P).
Each $m$-simplex $\si'$ in $\la(\si)$ is contained either $N_\e$ or $N_\e^*$. If $\si'\cap H=\tau'$
then we have $\si'\subset N_\e^*$. It follows that $b_{\tau'}=c_{\tau'}$. 

Suppose that  $\dim \si \cap H\leq m-3$.  
We have $\dim \si'\cap H\leq m-3$ for each $\si'$. If  $\si'=\pm [v_0,\cdots,v_m]$ with $v_0<\cdots< v_m$, then 
then we have  $T\overset{\cal O} \cap \si'=\pm T([v_0,v_1,v_2])[v_2,\cdots, v_m]=0$ since $[v_0,v_1,v_2]\cap H=\emptyset$, and 
$T\in C^2(K_\e, W)$ where $\dis W=\underset{\xi\in K_\e, \xi\cap H=\emptyset}\bigcup
\xi$.  Therefore we have $b_{\tau'}=c_{\tau'}=0$ for each $\tau'$.  The same argument
applies to each codimension one cubical face. 
\end{proof}

\begin{proof}[Proof of
Proposition \ref{cauchy not contained in D} 
assuming Proposition \ref{prop:Cauchy formula for codimension one}]

Let $\ga=\sum_{\sigma}a_\si \si$ be  the representative of $\ga$ 
such that $a_{\sigma}=0$ if $\si\cap \bold H^n \subset \bold D^n$.
We show that $\gamma_{\geq \epsilon}$ is an element in
$AC_{n+1}(K_{\epsilon}, \bold D^n; \QQ)$ as follows.
The  set $|\gamma_{\geq\epsilon}|$ is contained in $|\ga|$ which  is admissible by the assumption.
Hence  $|\gamma_{\geq\epsilon}|$ is admissible. 
We have 
$|\delta(\gamma_{\geq\epsilon})|=|(\delta\gamma)_{\geq\epsilon}|\cup
|\gamma_{=\epsilon}|$.  The set $|(\delta\gamma)_{\geq\epsilon}|$ is admissible by the same reason as above. 
For an $\epsilon>0$ which satisfies (P), 
$|\gamma_{=\epsilon}|$ is admissible by Lemma \ref{lem: admiss} (2).
Therefore
$\gamma_{\geq \epsilon}$ is an element in
$AC_{n+1}(K_{\epsilon}, \bold D^n; \QQ)$.

Since $N_\e^*\cap \bold H_h=\emptyset$,
the element $\ga_{\geq\e}$
satisfies the condition of Proposition \ref{prop:Cauchy formula for codimension one} and
we have
$I_{n-1}(\part(\ga_{\geq\e}))+(-1)^nI_n(\delta(\ga_{\geq\e}))=0$.
We prove the equality
\begin{equation}
\label{goal to limit arguement}
\lim_{\epsilon \to 0}\bigl(I_{n-1}(\part(\ga_{\geq\e}))+(-1)^nI_n(\delta(\ga_{\geq\e}))\bigr)
=
I_{n-1}(\part\ga)+(-1)^nI_n(\delta\ga).
\end{equation}

We set
$\delta\ga=\sum b_\nu \nu$ and
$\part \ga=\sum \tau\cdot c_\tau$.
By the admissibility of $\delta \gamma$ and $\partial \ga$,
the integrals
$\dis \int_{\nu}b_{\nu} \omega_n$ 
and $\dis \int_{\tau}c_\tau \omega_{n-1}$
converge absolutely 
by Theorem \ref{convadmiss}. 
By Lebesgue's convergence theorem,
we have
\[
 \lim_{\epsilon\to 0}\int_{\nu_{\geq\epsilon}}b_{\nu} \omega_n
= \int_{\nu}b_{\nu} \omega_n  \text{ and }
\lim_{\epsilon\to 0}
\int_{\tau_{\geq\epsilon}}c_{\tau}\omega_{n-1}
=\int_{\tau}c_{\tau}\omega_{n-1}.
\]
Therefore we have
\begin{align*}
\nonumber
&\lim_{\epsilon \to 0}I_n((\delta\gamma)_{\geq\epsilon})=I_n(\delta\gamma)
\\
\nonumber
&\lim_{\epsilon \to 0}
I_{n-1}(\part(\gamma_{\geq\epsilon}))=
\lim_{\epsilon \to 0}I_{n-1}((\part\gamma)_{\geq\epsilon})=
I_{n-1}(\part\gamma).
\end{align*}
By the equality $\delta(\gamma_{\geq\epsilon})=(\delta\gamma)_{\geq\epsilon}+\gamma_{=\epsilon}$,
to show the equality (\ref{goal to limit arguement}), 
it is enough to prove the equality 
\begin{equation}
\label{eq: limit 4th term}
\lim_{\epsilon \to 0}I_{n}(\gamma_{=\epsilon})=0.
\end{equation}
For a positive real number $t$ and $1\leq i \neq j \leq n$, $\al\in \{0,\infty\},\,\beta\in \{0,\infty\}$, 
we set
$$
A^{(i,\al),(j,\beta)}_{t}=\{z\in P^n|\  |z_i^{(\al)}|\leq|z_j^{(\beta)}|=t\}.
$$
Let $\si$ be an $(n+1)$-simplex  of $K$.  By Lemma \ref{lem: admiss} (1), the set $|\si_{=\e}|$ is contained
in the topological boundary of $N_\e$. Therefore 
we have the relation
$$
|\sigma_{=\epsilon}|\subset
\underset{\substack{1\leq i\neq j \leq n,\\ \alpha\in \{0,\infty\},\beta\in \{0,\infty\}}}{\bigcup} \si \cap A_{\epsilon}^{(i,\al),(j,\beta)}.
$$
By applying Theorem 4.7 \cite{part I} to the case where
$A=\si$, $m=n+1$ and $\omega=\omega_n$, we obtain  the following proposition.
\begin{proposition}
\label{prop:Contributions from higher codimension 4.7} 
Let $\sigma$ be an $(n+1)$-simplex.
Assume that $\sigma$ is admissible. 
Then for a sufficiently small $t>0$, 
the dimension of $\si\cap A_{t}^{(i,\al),(j,\beta)}$ is 
equal to or less than $n$, and we have
$$
\lim_{t \to 0}
\int_{\sigma\cap A_{t}^{(i,\al),(j,\beta)}}|\omega_n|=0.
$$
\end{proposition}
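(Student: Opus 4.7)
My plan is to invoke Theorem 4.7 of the companion paper \cite{part I} directly, as the proposition is presented precisely as a specialization of that result. The substitutions to make are $A=\sigma$ (the given admissible $(n{+}1)$-simplex), $m=\dim A=n+1$, and $\omega=\omega_n$. The task is then to verify the hypotheses, which reduces to unpacking admissibility, and to identify the dimension bound.

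For the dimension statement, I would appeal to the semi-algebraic triviality of semi-algebraic maps (Theorem 9.3.2 of \cite{BCR}), applied to the continuous semi-algebraic function $\sigma\to \RR_{\ge 0}$, $z\mapsto |z_j^{(\beta)}(z)|$. Semi-algebraic triviality furnishes a finite set $C\subset \RR_{>0}$ of critical values such that, for $t\notin C$, the level set $\sigma\cap\{|z_j^{(\beta)}|=t\}$ has dimension at most $(n{+}1)-1=n$. Choosing $t_0>0$ strictly smaller than all elements of $C$, for every $0<t<t_0$ the intersection $\sigma\cap A_t^{(i,\alpha),(j,\beta)}$, being the subset of $\sigma\cap\{|z_j^{(\beta)}|=t\}$ cut out by the additional inequality $|z_i^{(\alpha)}|\le t$, has dimension at most $n$, as required.

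For the limit statement, admissibility of $\sigma$ gives $\dim(\sigma\cap (H-\bold D^n))\le \dim\sigma-2\,\codim H$ for every cubical face $H$. Specializing to the codimension two face $H_{i,\alpha}\cap H_{j,\beta}$ yields $\dim(\sigma\cap(H_{i,\alpha}\cap H_{j,\beta}-\bold D^n))\le n-3$, which is the quantitative control required to enter Theorem 4.7 of \cite{part I}. Its conclusion is exactly $\lim_{t\to 0}\int_{\sigma\cap A_t^{(i,\alpha),(j,\beta)}}|\omega_n|=0$.

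The genuine content of the argument is therefore not in the local reduction above but in Theorem 4.7 of \cite{part I}: the main obstacle is that $\omega_n$ carries logarithmic poles along both $H_{i,\alpha}$ and $H_{j,\beta}$, so a crude volume estimate of the shrinking tube $A_t^{(i,\alpha),(j,\beta)}$ does not suffice to produce a vanishing limit. The admissibility inequality is used to show that $\sigma$ is sufficiently thin near the higher-codimension stratum $H_{i,\alpha}\cap H_{j,\beta}$ to suppress the logarithmic divergence, and this balancing is precisely what Part I's theorem encodes; for the present proposition one simply has to plug in and check that our $\sigma$, $m$, and $\omega_n$ satisfy its hypotheses.
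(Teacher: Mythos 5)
Your proposal is correct and matches the paper's own treatment: the paper obtains this proposition exactly by applying Theorem 4.7 of \cite{part I} with $A=\sigma$, $m=n+1$ and $\omega=\omega_n$, which is precisely your reduction. Your added remarks on verifying the hypotheses (the admissibility bound $\dim(\sigma\cap(H_{i,\alpha}\cap H_{j,\beta}-\bold D^n))\le n-3$ and the dimension bound on the level sets via \cite{BCR} Theorem 9.3.2) are consistent with how the paper handles the analogous points elsewhere (condition (P)), so no further comment is needed.
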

By Proposition \ref{prop:Contributions from higher
 codimension 4.7}, 
we have
\begin{equation*}
\lim_{\epsilon \to 0}
\int_{\sigma_{=\epsilon}} |\omega_n|\leq
\sum_{\substack{ i\neq j\\ \al,\beta}}
\lim_{\epsilon \to 0}
\int_{\sigma \cap A_{\epsilon}^{(i,\al),(j,\beta)}} |\omega_n|=0.
\end{equation*}
and as a consequence, we have the equality (\ref{eq: limit 4th term}).
\end{proof}

\subsection{Proof of Proposition \ref{cauchy: dn case}}
\label{subsubsection:contained in D}
Supose that an element $\gamma\in AC_{n+1}(K,\bold D^n;\QQ)$
satisfies the condition $|\gamma|\cap \bold H^n \subset \bold D^n$. If $\sum a_\si \si$ is a representative of
$\ga$ in $C_{n+1}(K;\QQ)$, then for each $\si$ with $a_\si\neq 0$ we have $\si\cap \bold H^n\subset \bold D^n$,
and $\si\in AC_{n+1}(K,\bold D^n;\QQ)$. So it is necessary and sufficient to prove the assertion for each such $\si$.
Since $K$ is a good triangulation, for an $(n+1)$-simplex $\sigma$, 
$\si \cap \bold H^n$
 is a face of 
$\si$. 
Therefore $\si\cap  \bold H^n\subset H_{i,\alpha}$
for some $(i,\alpha)$
because $\bold H^n$ is the union of the codimension one faces $H_{i,\alpha}$.
We may assume that $H=  H_{1,0}$.

 For a positive number $\epsilon$ which satisfies the condition (P), we set
$$
N_\epsilon^*=\{|z_1|\geq \epsilon\}\subset P^n \text{ and }
N_\e=\{|z_1|\leq \e\}\subset P^n.
$$
\index{$N$}
Let $K_{\epsilon}$ be a subdivision of $K$ such that $N_{\epsilon}^*$ and $N_\e$
are subcomplexes of $K_{\epsilon}$, and let $\la:\,C_\bullet(K)\to C_\bullet(K_\e)$
be the subdivision operator.  Let $\la(\si)=\sum \si'$, and put $\si_{\geq\e}=\underset{
\si', \si'\subset N_\e^*}\sum \si$. We set $\si_{=\e}=\delta(\si_{\geq \e})-(\delta\si)_{\geq\e}.$ 
By the same argument as the proof of Lemma \ref{lem: admiss} (1),
we have $|\si_{=\e}|\subset \{|z_1|=\e\}$. 

 Since
$\sigma_{\geq\epsilon}$ does not meet $\bold H^n$, we have the equality
\begin{equation}
\label{stokes contained in D}
0=\int_{\delta(\sigma_{\geq\epsilon})} a_\sigma  \omega_n=
\int_{\delta(\sigma)_{\geq\epsilon}} a_\sigma  \omega_n+
\int_{\sigma_{=\e}} a_\sigma  \omega_n
\end{equation}
by the Stokes formula.
We consider the limit of this as $\epsilon \to 0$. 
As the chain $\delta \si$ is admissible, the integral
$\dis \int_{\delta\si}a_\si \omega_n$ converges absolutely. 
By Lebesgue's convergence theorem, 
we have
$$
\lim_{\epsilon \to 0}
\int_{(\delta\sigma)_{\geq\epsilon}}a_\sigma  \omega_n
=\int_{\delta\sigma} a_\sigma \omega_n
$$
By applying Theorem 4.8 \cite{part I} to the case where $A=\si$, $\omega=\omega_n$
,  $m=n+1$ and $\e=1$, we have  the following proposition. 
\begin{proposition}
\label{prop:Contributions from higher codimension 4.8} 
Let $\sigma$ be an $(n+1)$-simplex.
Assume that $\si\cap \bold H^n\subset
\bold D^n$. Then we have
$$
\lim_{t\to 0}\int_{\sigma\cap\{|z_1|=t\}}|\omega_n|=0.
$$
\end{proposition}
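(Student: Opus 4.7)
My plan is to derive Proposition \ref{prop:Contributions from higher codimension 4.8} as a direct specialization of Theorem 4.8 in \cite{part I}, matching the data $A = \sigma$, dimension $m = n+1$, form $\omega = \omega_n$, and auxiliary parameter $\eps = 1$. First I would record the set-theoretic setup: since $\sigma$ is a simplex of the good triangulation $K$ of $P^n$, it is a compact semi-algebraic subset of $P^n$ of real dimension $n+1$, so the basic prerequisites of Theorem 4.8 are immediate, and $\omega_n$ is a rational form with logarithmic poles along $\bold H^n$ to which Theorem 4.8 applies.

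Next I would translate the hypothesis $\sigma \cap \bold H^n \subset \bold D^n$ into a usable geometric statement near the slicing divisor $\{z_1 = 0\}$. Because $K$ is a good triangulation (so each $H_{i,\alpha}$ is a full subcomplex), the intersection $\tau := \sigma \cap H_{1,0}$ is a genuine simplicial face of $\sigma$, and the assumption forces $\tau \subset \bold D^n$, i.e.\ $\tau$ lies inside some $\{z_j = 1\}$. In a small neighborhood of $\tau$ in $\sigma$, every coordinate $z_j$ with $j \geq 2$ therefore stays uniformly bounded away from $0$ and $\infty$, so that apart from the factor $\tfrac{dz_1}{z_1}$ the form $\omega_n$ is smooth and bounded. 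This is precisely the localized logarithmic behavior controlled by Theorem 4.8, which estimates how much of the mass of $|\omega_n|$ can concentrate on the shrinking torus-slice $\sigma \cap \{|z_1| = t\}$.

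With these hypotheses verified, applying Theorem 4.8 yields the conclusion
$$\lim_{t\to 0}\int_{\sigma\cap\{|z_1|=t\}} |\omega_n| = 0,$$
which is the statement of the proposition.

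The main technical obstacle sits entirely inside \cite{part I}, namely obtaining the uniform estimate for $|\omega_n|$ on the semi-algebraic family $\{\sigma \cap \{|z_1|=t\}\}_{t>0}$ and combining it with a semi-algebraic Fubini-type argument on the cylindrical coordinates around $z_1 = 0$. Since all of that work has been packaged into Theorem 4.8, the present proof should amount to identifying the parameters and unwinding $\sigma \cap \bold H^n \subset \bold D^n$ into the hypothesis of that theorem; I do not foresee any additional difficulty beyond this verification.
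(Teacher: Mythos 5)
Your proposal coincides with the paper's own treatment: Proposition \ref{prop:Contributions from higher codimension 4.8} is obtained there exactly as you suggest, namely as the specialization of Theorem 4.8 of \cite{part I} to the data $A=\sigma$, $m=n+1$, $\omega=\omega_n$, $\e=1$, with no further argument supplied. Your intermediate geometric remark (that all $z_j$, $j\ge 2$, stay uniformly away from $0$ and $\infty$ near $\sigma\cap H_{1,0}$) is not literally forced by $\sigma\cap \bold H^n\subset \bold D^n$ alone, but it is also not needed for the reduction, so the proof stands as the same citation-and-parameter-matching argument the paper uses.
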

By Proposition \ref{prop:Contributions from higher codimension 4.8}, 
we have the equality
$$
\lim_{\epsilon \to 0}
\int_{\sigma_{=\epsilon}}a_\sigma \omega_n=0.
$$
By taking the limit of (\ref{stokes contained in D})
for $\epsilon \to 0$, we have
$$
\int_{\delta\sigma} a_\sigma  \omega_n=
0
$$
and we finish the proof of Proposition \ref{cauchy: dn case}.

\section{Construction of the Hodge realization functor.}
\label{sec: Hodge realization}
In this section, we give a construction of 
Hodge realization functor for the category of mixed Tate motives.

\subsection{Cycle complexes and graded DGA $N$}

Let $\bold k$ be a field.    
Following \cite{BK}, we recall that the cycle complex of $\Spec {\bold k} $ may be viewed as a DGA over $\QQ$. 

Bloch defined the cycle complex for any quasi-projective variety, but we will restrict to the
 case of $\Spec {\bold k}$. 
Let $\sq^n=\sq^n_{\bold k}=(\bold P^1_{\bold k} -\{1\})^n$, which is isomorphic to affine $n$-space 
as a variety (and which coincides with $\sq^n$ of \S 2 if ${\bold k}=\CC$). 
As in \S 2,  $(z_1, \cdots, z_n)$ are the coordinates of $\sq^n$.

For $n\ge 0$ and $r\ge 0$, let $Z^r(\Spec \bold k, n)$\index{$Z(r, n)$} be the $\QQ$-vector space with basis irreducible 
closed subvarieties of $\sq^n$ of codimension 
$r$ which meet the faces properly.   
The cubical differential $\partial: Z^r(\Spec \bold k, n)\to Z^r(\Spec \bold k, n-1)$ is defined by the same formula
as Definition \ref{def face map}. 

The group $G_n= \{\pm 1\}^n\rtimes S_n$\index{$G_n$} acts naturally on  $\s$ as follows. The subgroup
$\{\pm 1\}^n$ acts by the inversion of the coordinates $z_i$, and  the symmetric group $S_n$ acts by
permutation of $z_i$'s. This action induces an action of  $G_n$ on $Z^r(\Spec \bold k, n)$.
Let $\sign: G_n \to \{\pm 1\}$ be the character which sends 
$(\eps_1, \cdots, \eps_n; \sigma)$ to $\eps_1\cdot \cdots\cdot\eps_n\cdot\sign(\sigma)$. 
The idempotent $\Alt=\Alt_n:=({1}/{|G_n|})\sum_{g\in G_n} \sign(g) g $ \index{$\Alt$}
in the group ring $\QQ[G_n]$ is called the alternating 
projector. For a $\Q[G_n]$-module M, 
the submodule
$$
M^{\alt}=\{\al \in M\mid \Alt \al=\al\}=\Alt(M)
$$
\index{$M^{\alt}$}
is called the alternating part of $M$. By Lemma 4.3 \cite{BK}, one knows that the cubical differential
$\part $ maps $Z^r(\Spec \bold k,n)^{\rm alt}$ to $Z^r(\Spec \bold k,n-1)^{\rm alt}$. 
For convenience let $ Z^r(\Spec {\bold k}, n)=0$ if $n<0$. 
Product of cycles induces a map of complexes 
$\times : Z^r(\Spec \bold k, n)\otimes Z^s(\Spec \bold k, m)\to Z^{r+s}(\Spec \bold k, n+m)$, $z\otimes w\mapsto z\times w$. 
This induces a map of complexes on alternating parts 
$$
Z^r(\Spec {\bold k}, n)^{\rm alt}\otimes Z^s(\Spec {\bold k}, m)^{\alt}\to Z^{r+s}(\Spec {\bold k}, n+m)^{\rm alt}
$$
given by $z\otimes w\mapsto z\cdot w=\Alt (z\times w)$.
We set $N_r^i= Z^r(\Spec {\bold k}, 2r-i)^{\rm alt}$ for $r\ge 0$ and $i\in \ZZ$
\index{$N_r^i$, $N^{\bullet}_r$}.


One thus has an associative product map
$$N_r^i\otimes N_s^j\to N_{r+s}^{i+j}\,,\qquad z\otimes w\mapsto z\cdot w\,,$$
One verifies that the product is 
graded-commutative:
$w\cdot z=(-1)^{ij}z\cdot w$ for $z\in N_r^i$ and $w\in N_s^j$. 

Let $\dis N=\underset{r\ge 0,\, i\in \ZZ}\oplus  N_r^i$, and $N_r=\underset{i\in \ZZ}\oplus N_r^i$. 
\index{$N$, $N^i$} By  Lemma 4.3 \cite{BK}, $N$ with the above product and the differential $\part$
is a differential graded algebra (DGA) over $\QQ$.

\noindent By definition, one has $N_0=\QQ$ and $1\in N_0$ is the unit for the product. 
Thus the projection $\epsilon : N\to N_0=\QQ$\index{augmentation $\epsilon$} is an augmentation, namely it is a map of DGA's and the composition 
with the unit map $\QQ\to N$ is the identity.

\subsection{The complex $ {AC}^\bullet$}
Assume now that  $\bold k$ is a subfield of $\CC$. 

\begin{definition}[The complex $AC^\bullet$] 
\label{def:admissible triple complex}
For an integer $k$, let 
$$
 AC^k=\underset{n-i=k}\bigoplus
AC_i(P^n, \bold D^n; \QQ)^{\rm alt}
$$
Here we set 
$$
AC_i(P^0, \bold D^0; \QQ)^{\rm alt}=
\left\{
\begin{array}{cc}
\QQ & i=0\\
0& i\neq 0
\end{array}
\right.
$$
We define the  differential $d$ of $AC^\bullet$ 
by the equality
\begin{equation}
\label{eq:definition of differential D}
d(\alpha)=\partial \alpha+(-1)^n\delta\alpha \text{ for } \al\in AC_i(P^n,\bold D^n;\QQ)^{\rm alt}.
\end{equation}
Here we use the fact that the maps $\part$ and $\delta$ commute with the projector $\rm Alt$. 
cf. Lemma 4.3 \cite{BK}.

\end{definition}
We have a natural inclusion $Z^r(\Spec {\bold k}, 2r-i)\to AC_{2r-2i}(P^{2r-i},\bold D^{2r-i}; \QQ)$
which induces an inclusion $\iota:\, N\to AC^\bullet$.
\begin{proposition}
\label{properties of tc}
\begin{enumerate}
\item Let $u:\,\QQ\to AC^\bullet$ be the inclusion defined by identifying
$\QQ$ with $AC_0(P^0,\bold D^0;\QQ)^{\rm alt}$. Then the map $u$ is a quasi-isomorphism.

\item The inclusion $\iota$ and the product defined by
$$
\begin{array}{c}
N_r^i\otimes AC_j(P^n,\bold D^n;\QQ)^{\rm alt}\to AC_{2r-2i+j}(P^{n+2r-i},\bold D^{n+2r-i};\QQ)^{\rm alt}\\
z\times \ga\mapsto z\cdot \ga:={\rm Alt}(\iota(z)\times \ga)
\end{array}
$$
makes $AC^\bullet$ a differential graded $N$-module i.e. the product
sends $N^i_r\otimes AC^j$ to $AC^{i+j}$, and one has the derivation formula; for $z\in N_r^i$ and $\ga\in AC^\bullet$,
one has 
$$ d(z\cdot \ga)=\part z\cdot \ga+(-1)^iz\cdot d\ga$$

\item The map $I$ defined by 
$I=\sum_{n\geq 0}I_n:\,{AC}^{\bullet}\to \CC$ is a map of complexes. Here the map $I$
on $AC_0(P^0, D^0; \QQ)^{\rm alt}=\QQ$ is defined to be the natural inclusion of $\QQ$ to $\CC$. The map
$I$ is a homomorphism of $N$-modules; for $z\in N$ and $\ga\in AC^\bullet$,
we have an equality
$$I(z\cdot \ga)=\e(z)I(\ga)$$

\item Let $I_\CC: AC^\bullet \otimes \CC \to \CC$ be the map defined by the composition
\newline $\,AC^\bullet\ot \CC \overset {I\ot {\rm id}}\to \CC\ot \CC\overset{m}\to \CC $. Here the map $m$ is the multiplication.
Then  this map $I_\CC$ is a quasi-isomorphism.
\end{enumerate}

\end{proposition}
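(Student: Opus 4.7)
The proposition has four parts, which I would address in an order reflecting logical dependence: (1), (3), (4) is then formal, and (2) is the sign-chasing part.

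\textbf{Part (1).} My plan is the spectral sequence of the increasing filtration
\[
F_pAC^\bullet=\bigoplus_{n\le p}AC_\bullet(P^n,\bold D^n;\QQ)^{\rm alt},
\]
which is preserved by $d=\part+(-1)^n\delta$ since $\part$ decreases $n$ by one and $\delta$ preserves it. The $n$-th associated graded is $AC_\bullet(P^n,\bold D^n;\QQ)^{\rm alt}$ with internal differential $(-1)^n\delta$, so the $E^1$-page is $H_*(AC_\bullet(P^n,\bold D^n;\QQ)^{\rm alt})$. I would apply Proposition~\ref{prop: moving lemma} $G_n$-equivariantly (admissibility and the subdivision operator are manifestly $G_n$-natural) to identify this with $H_*(P^n,\bold D^n;\QQ)^{\rm alt}$. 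Since $P^n\setminus\bold D^n\cong\CC^n$ is contractible, Lefschetz duality (or K\"unneth on the pair $(\PP^1,\{1\})$) gives $H_*(P^n,\bold D^n;\QQ)=\QQ$ concentrated in top degree $2n$. The essential observation is that $G_n$ acts trivially on this top class: inversions $z_i\mapsto 1/z_i$ are biholomorphisms of $\PP^1$ hence orientation preserving, and swapping two $\PP^1$-factors gives the sign $(-1)^{2\cdot2}=+1$. Since the character $\sgn$ is nontrivial for $n\ge1$, the alternating part vanishes there; for $n=0$ the group $G_0$ is trivial and the alternating part is $\QQ$. Hence $E^1$ collapses to $\QQ$ in bidegree $(0,0)$, and $u:\QQ\to AC^\bullet$ manifestly represents this generator.

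\textbf{Parts (3) and (4).} That $I$ is a chain map is a direct reformulation of the generalized Cauchy formula: for $\ga\in AC_{n+1}(P^n,\bold D^n;\QQ)^{\rm alt}\subset AC^{-1}$, Theorem~\ref{th:generalized Cauchy formula} gives $I(d\ga)=I_{n-1}(\part\ga)+(-1)^nI_n(\delta\ga)=0$. For the $N$-module compatibility $I(z\cdot\ga)=\e(z)I(\ga)$, only $z\in N_r^i$ with $r\ge1$ is nontrivial, and in that range $\e(z)=0$. Using Fubini and the K\"unneth decomposition $\omega_{(2r-i)+n}=\omega_{2r-i}\wedge\omega_n$ (each factor pulled back from its $P^\bullet$-factor), I would reduce $I(z\cdot\ga)$ to $\int_{\iota(z)}\omega_{2r-i}$ times a form on $\ga$; this vanishes because $\omega_{2r-i}$ is a holomorphic $(2r-i,0)$-form while $\iota(z)$ has complex dimension $r-i<2r-i$, and even at $i=0$, the complex dimension $r$ of $\iota(z)$ is strictly less than $2r$ for $r\ge1$, forcing the restriction to vanish. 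Part (4) is then formal: $I_\CC\circ(u\ot\id_\CC)=\id_\CC$, because $u(1)$ is the generator of $AC_0(P^0,\bold D^0;\QQ)^{\rm alt}=\QQ$ on which $I_0$ is the canonical inclusion, so Part (1) and two-out-of-three make $I_\CC$ a quasi-isomorphism.

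\textbf{Part (2) and the main obstacle.} Admissibility of $\iota(z)\times\ga$ in $P^{2r-i+n}$ follows from the product structure of cubical faces and dimension counting applied to $z$ and $\ga$ separately; $\Alt$ tautologically lands in the alternating part. For the derivation formula I would expand
\[
\part(\iota(z)\times\ga)=\part\iota(z)\times\ga+(-1)^{2r-i}\iota(z)\times\part\ga,
\]
with the sign coming from splitting the face maps of $P^{2r-i+n}$ between the two factors, together with the analogous formula for $\delta$, and use that $\iota(z)$ is a closed algebraic cycle whose topological boundary is carried by $\bold D^{2r-i}$, so $\delta\iota(z)=0$ in the relative complex. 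Collecting terms via $(-1)^{2r-i}=(-1)^i$ and applying $\Alt$ (which commutes with both $\part$ and $\delta$) yields $d(z\cdot\ga)=\part z\cdot\ga+(-1)^iz\cdot d\ga$. I expect the chief obstacle to be precisely this sign bookkeeping, where four independent conventions---the cubical face ordering, the topological boundary on a product, the $(-1)^n$-twist in $d$, and the alternating projector---must be reconciled; the conceptual content of the other parts is comparatively clean once one notes the orientation-preserving nature of the $G_n$-action and the vanishing of high-degree holomorphic forms on low-dimensional complex subvarieties.
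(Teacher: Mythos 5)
Your proposal is correct and follows essentially the same route as the paper: part (1) via the moving lemma, the K\"unneth computation of $H_*(P^n,\bold D^n;\QQ)$ and the triviality of the $G_n$-action on the orientation class, part (2) via the product formulas for $\part$ and $\delta$ together with $\delta(\iota(z)\times\ga)=\iota(z)\times\delta\ga$ and the compatibility of $\Alt$ with both differentials, part (3) via the generalized Cauchy formula plus the type/dimension vanishing of $\omega_{2r-i}$ on the algebraic cycle, and part (4) formally from (1) and $I_\CC\circ(u\otimes\id)=\id_\CC$. Your explicit filtration-by-$n$ spectral sequence in (1) and the Fubini phrasing in (3) are just more detailed versions of steps the paper leaves implicit.
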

\begin{proof} 
(1)  
 Since $(P^n, \bold D^n)= (\BP^1, \{1\})^n$ (the $n$-fold self product), the K\"unneth formula tells us
 $$
H_*(C_{\bullet}(P^n,\bold D^n;\Q))= H_*(P^n, \bold D^n;\Q)= H_*(\BP^1, \{1\};\Q)^{\otimes n}\,. 
$$
 It follows that $H_i(P^n, \bold D^n;\Q)=0$ for $i\neq 2n$, and $H_{2n}(P^n, \bold D^n;\Q)=\QQ [P^n]$, where 
 $ [P^n]$ denotes  the image of the orientation class $[P^n]\in H_{2n}(P^n;\Q)$.
 Since $[P^n]$ is fixed by all $g\in G_n$, 
the alternating part $H_*(P^n, \bold D^n;\Q)^{\rm alt}$ is zero for $n>0$.
By Proposition \ref{prop: moving lemma}
, the complex $AC_\bullet(P^n, \bold D^n;\Q)$ is quasi-isomorphic to
$C_\bullet(P^n, \bold D^n;\Q)$.
It follows that $AC_\bullet(P^n, \bold D^n;\Q)$ is an acyclic complex for $n>0$. 
Therefore the map $u:\Q \to AC^{\bullet}(\Q)$ is a quasi-isomorphism.

(2) The proof is similar to that of Lemma 4.3 \cite{BK}. The points are that the cubical differential $\part$
and the topological differential $\delta$ commute with the projector Alt, and that we have the equality
$$\delta(z\times \ga)=z\times \delta(\ga)$$
since $z$ is a topological cycle of even dimension. 

(3) By Theorem \ref{th:generalized Cauchy formula}  the map $I$ is a homomorphism of
complexes. The equality holds because the integral of the logarithmic differential form
$\omega_n$ on an algebraic cycle is zero since the restriction of the from $\omega_n$ to a strict
subvariety of $P^n$ is zero.

(4) This follows from the assertion (1) and the fact that the map $I_\CC\circ (u\otimes {\rm id}):
\,\CC=\QQ\otimes \CC\to \CC$ is the identity map. 
\end{proof}


\subsection{The bar complex}
\label{bar complex}
Let $M$ (resp. $L$) be a complex which is a differential left $N$-module  (resp. right  $N$-module).  We recall the definition
of the bar complex $B(L, N, M)$ (see \cite{EM}, \cite{Ha} for details.) 

Let $N_+= \oplus_{r>0} N_r$.  As a module, $B(L, N,M)$\index{$B(L, N,M)$} 
is equal to 
$L\otimes \bigl(\bigoplus_{s\ge 0} (\otimes ^s N_+)\bigr)\otimes M$, 
with the convention $(\otimes ^s N_+)=\QQ$ for $s=0$. 
An element $l\otimes (a_1\otimes \cdots \otimes a_s) \otimes m$  of 
 $L\otimes  (\otimes ^s N_+)\otimes M$ 
  is written as
$l[a_1|\cdots |a_s] m$ (for $s=0$, we write $l[\,\,]m$ for $l\otimes 1\otimes m $ in $L\otimes \QQ\otimes M$ ).

 The internal differential $d_I$ is defined by
\begin{align*}
 d_I&(l[a_1|\cdots |a_s] m)\\
=&dl  [a_1|\cdots |a_s]m\\
+&\sum_{i=1}^{s}
(-1)^i Jl[Ja_1|\cdots |Ja_{i-1}|da_{i}|\cdots |a_s] m
+(-1)^sJl[Ja_1|\cdots |Ja_s] dm
\end{align*}
where
$Ja=(-1)^{\deg a}a$.
The external differential $d_E$ is defined by
\begin{align*}
 d_E&( l [a_1|\cdots |a_s] m)\\
=&-(Jl)a_1[a_2|\cdots |a_s] m\\
+&\sum_{i=1}^{s-1}
(-1)^{i+1}Jl[Ja_1|\cdots |(Ja_{i})a_{i+1}|\cdots |a_s] m
\\
+&(-1)^{s-1}Jl[Ja_1|\cdots |Ja_{s-1}] a_sm.
\end{align*}
Then we have $d_Id_E+d_Ed_I=0$ and the map $d_E+d_I$ defines
a differential on $B(L, N,M)$. The degree of an element
$l[a_1|\cdots |a_s] m$
is defined by $\sum_{i=1}^s \deg a_i +\deg l+\deg m-s$. 

If $L=\QQ$ and the right $N$-module structure is given by the augmentation
$\epsilon$,
the complex $B(L,N,M)$ is denoted by $B(N,M)$
and omit the first factor ``$1\otimes$''.
If $L=M=\Q$ with the $N$-module structure given by the augmentation $\e$,
we set 
$$
B(N):=B(\Q,N, \Q).
$$
we omit the first and the last tensor factor ``$1\otimes$'' and
``$\otimes 1$'' for
an element in $B(N)$. \index{$B(N), B(N)_r$}

The complex $B(N)$ is graded by non-negative integers as a complex: $B(N)=\oplus_{r\ge 0}B(N)_r$  where 
$B(N)_0 = \QQ$
and, for $r > 0$,
$$
B(N)_r=\oplus_{r_1+\cdots +r_s=r,\,r_i> 0}
N_{r_1}\otimes \cdots \otimes N_{r_s}.
$$
Let $\Delta:B(N)\to B(N)\otimes B(N)$
\index{$\Delta:B(N)\to B(N)\otimes B(N)$}
be the map given by
$$
\Delta([a_1|\cdots |a_s])
=
\sum_{i=0}^s\big([a_1|\cdots |a_i]\big)\otimes
\big([a_{i+1}|\cdots |a_s]\big).
$$
and $e : B(N) \to \QQ$ be the projection to $B(N)_0$ . These are maps of complexes, and they satisfy
coassociativity $(\Delta \otimes 1)\Delta = (1\otimes \Delta)\Delta$ and counitarity 
$(1\otimes e)\Delta = (e\otimes 1)\Delta = id$, in other words
$\Delta$ is a coproduct on $B(N)$ with counit $e$.
In addition, the shuffle product (see e.g. \cite{EM}) makes $B(N)$ a
DG algebra with unit $\QQ = B(N)_0\subset B(N)$. The shuffle product is graded-commutative. Further, the
maps $\Delta$ and $e$ are compatible with product and unit. We summarize:
\begin{enumerate}
\item
$B(N) =
\oplus_{r\geq 0} B(N)_r$
is a DG bi-algebra over $\QQ$. (It follows that $B(N)$ is a DG Hopf algebra, since it
is a fact that antipode exists for a graded bi-algebra.)
\item
$B(N) =\oplus_{r\geq 0} B(N)_r$
is a direct sum decomposition to subcomplexes, and product, unit, co-product and counit are compatible with this decomposition.
\item
The product is graded-commutative with respect to the cohomological degree which we denote by $i$.
\end{enumerate}
With due caution one may say that $B(N)$ is an ``Adams graded'' DG Hopf algebra over $\QQ$, with graded-
commutative product; the first ``Adams grading'' refers to $r$, and the second grading refers to $i$, while
graded-commutativity of product is with respect to the grading $i$ (the product is neither graded-commutative or commutative with respect to $r$). We recall that graded Hopf algebra in the
literature means a graded Hopf algebra with graded-commutative product, so our $B(N)$ is a graded
Hopf algebra in this sense with respect to the grading $i$, but is not one with respect to the
``Adams grading'' $r$.
In the following
we will denote $H^0(B(N))$ by $\chi_N$.
The product, unit, coproduct, counit on $B(N)$ induce the corresponding
maps on ${\chi_N}$, hence ${\chi_N}$ is an ``Adams graded'' Hopf algebra over $\QQ$ in the following sense:
\begin{enumerate}
\item
${\chi_N}$ is a Hopf algebra over $\QQ$.
\item
With $({\chi_N})_r := H^0 (B(N)_r )$, one has 
${\chi_N}=
\oplus_{r \geq 0} ({\chi_N})_r$
a direct sum decomposition to subspaces;
the product, unit, coproduct and counit are compatible with this decomposition. 
\end{enumerate}

We also have the coproduct map 
$\Delta:H^0(B(\Q,N, M))\to \chi_N \otimes H^0(B(\Q,N, M))$
obtained from the homomorphism of complexes
$\Delta:B(\Q,N, M)\to B(N)\otimes B(\Q,N, M)$
given by
$$
\Delta([a_1|\cdots |a_s] m)
=
\sum_{i=0}^s\big([a_1|\cdots |a_i]\big)\otimes
\big([a_{i+1}|\cdots |a_s] m\big).
$$

We define the category of mixed Tate motives after Bloch-Kriz \cite{BK}. 
\begin{definition}[Adams graded $\chi_N$-co-modules, 
mixed Tate motives\index{mixed Tate motives}, \cite{BK}]

\begin{enumerate}
\item
Let $V=\oplus_iV_i$ be  an Adams graded vector space (to be precise,  a finite dimensional $\Q$-vector space equipped with a grading 
by integers $i$).  
A linear map
$$
\Delta_V:V \to V\otimes {\chi_N}
$$
is called a graded coaction of ${\chi_N}$
if the following conditions hold.
\begin{enumerate}
\item
$\Delta_V(V_i)\subset\oplus_{p+q=i}V_p\otimes {\chi_N}_q$.
\item
(Coassociativity) 
The following diagram commutes.
$$
\begin{matrix}
V &\xrightarrow{\Delta_V} &V\otimes {\chi_N} \\
\Delta_V\downarrow \phantom{\Delta_V}& & \phantom{id_V\otimes \Delta_{{\chi_N}}}\downarrow 
{\rm id}_V\otimes \Delta_{{\chi_N}}
  \\
V\otimes {\chi_N} & \xrightarrow{\Delta_V\otimes {\rm id}_{{\chi_N}}} & V\otimes {\chi_N} \otimes {\chi_N}
\end{matrix}
$$
\item
(Counitarity)
The composite
$$
V \to V\otimes {\chi_N} \xrightarrow{{\rm id}_V\otimes e} V,
$$
is the identity map,
where $e$ is the counit of ${\chi_N}$.
\end{enumerate}
An Adams graded vector space $V$ with a coaction $\Delta_V$ of ${\chi_N}$
preserving the Adams grading is called an Adams graded right co-module
over ${\chi_N}$. For an Adams graded right co-modules $V$, $W$ over ${\chi_N}$, a linear map
$V\to W$ is called a homomorphism of Adams graded right co-modules over 
${\chi_N}$ if it preserves the Adams gradings 
and the coactions of ${\chi_N}$.
The category of Adams graded right co-modules over ${\chi_N}$ is denoted by 
${\rm co-rep}^f(\chi_N)$.
\index{$(\operatorname{Com}^{gr}_{H^(B(N))})$}
\item
The category of mixed Tate motives 
$(MTM)=(MTM_{\bold k})$ 
\index{$(MTM), (MTM_{\bold k})$} 
over  $\Spec(\bold k)$
is defined as the category 
${\rm co-rep}^f(\chi_N)$
of finite dimensional Adams graded right co-modules over ${\chi_N}$.
\end{enumerate}
\end{definition}
\subsection{Ind-mixed Hodge structure $\cal H_{Hg}$}
\label{Hhodge}

We recall the definition of the Tate Hodge structure.  For an integer $r$, let
$\Q(r)=\Q(2\pi i)^r$\index{$\Q(r)$} with the weight filtration $W$ defined by
$\Q(r)=W_{-2r}\supset W_{-2r-1}=0$, and let $\C(r)=\C$ with
the Hodge filtration $F$ defined by 
$\C(r)=F^{-r}\supset F^{-r+1}=0$.\index{$\C(r)$}
We define the mixed Tate Hodge structure $\Q_{Hg}(r)$\index{$\Q_{Hg}(r)$} 
of weight $-2r$ by
the $\QQ$-mixed Hodge structure $(\QQ(r),\CC(r),F,W)$ where the 
comparison map $c:\,\Q(r)\to \C(r)$ is defined by the inclusion map.
This is a Hodge structure of type $(-r,-r)$.
For  a mixed Hodge structure $H_{Hg}$,
$H_{Hg}\otimes \QQ_{Hg}(r)$ is denoted by
 $H_{Hg}(r)$.
\index{$H(r)$, $H_{Hg}(r)$}  A (finite dimensional) mixed Hodge structure is called a mixed Tate
Hodge structure if the weight graded quotients
are isomorphic to direct sums of Tate Hodge structures.
An inductive limit of mixed Tate Hodge structure is called
a ind-mixed Tate Hodge structure. 

\label{MTHS bar complex}
In this section, we define a mixed Hodge structure $\cal H_{Hg}$
with a left coaction of $\cal H$.

We define the bar complexes $\cal B_B$ and $\cal B_{dR}$ by 
$$
\cal B_B=B(N,AC^\bullet) \,\text{ and }\, \cal B_{dR}=B(N,\bold k).
$$

We introduce the weight filtration $W_{\bullet}$ on 
$\cal B_B$ and
$\cal B_{dR}$ as follows.
\index{$W_n\cal B_B, W_n\cal B_{dR}$} 
$$
 W_n\cal B_B= 
\bigoplus_{2(r_1+\cdots +r_s)
\leq n,s\geq 0,\,r_i>0}N_{r_1}\otimes \cdots \otimes N_{r_s}\otimes AC^{\bullet}.
$$
$$
 W_n\cal B_{dR}= 
\bigoplus_{2(r_1+\cdots +r_s)\leq n,s\geq 0,\,r_i>0}N_{r_1}\otimes
\cdots \otimes N_{r_s} \otimes\bold k.
$$
The
Hodge filtration $F^{\bullet}$ on
$\cal B_{dR}$ is defined by 
$$
F^p\cal B_{dR}=
\bigoplus_{r_1+\cdots +r_s\geq p,s\geq 0,\,r_i>0}
N_{r_1}\otimes
\cdots \otimes N_{r_s} \otimes\bold k.
$$
We have a canonical isomorphism 
$$\text{gr}^W_{2r}{\cal B}_B=
\bigoplus_{r_1+\cdots +r_s=r,\,s\geq 0,\,r_i>0}N_{r_1}\otimes \cdots \otimes N_{r_s}\otimes AC^{\bullet}
$$
resp.
$$\text{gr}^W_{2r}{\cal B}_{dR}=
\bigoplus_{r_1+\cdots +r_s=r,\,s\geq 0,\,r_i>0}N_{r_1}\otimes \cdots \otimes N_{r_s}\otimes
\bold k.$$

We define the comparison map $c:\,\,{\cal B}_B\to
{\cal B}_{dR}\otimes_{\bold k}\CC $ to be $\text{id}\otimes (2\pi i)^{-r} I$
on $\text{gr}^W_{2r}{\cal B}_B$. By Proposition \ref{properties of tc} (3), the comparison map $c$ induces a homomorphism of complexes
from ${\cal B}_B$ to ${\cal B}_{dR}\otimes_{\bold k}\CC$. 

\begin{definition}
\label{def of universal mixed Hodge structure}
 We define the Betti part $\cal H_B$
and  the de Rham part $\cal H_{dR}$
of $\cal H_{Hg}$
by $\cal H_B=H^0(\cal B_B)$ and 
$\cal H_{dR}=H^0(\cal B_{dR})$.
\index{$\cal H_{B}, \cal H_{dR}$}

\end{definition}
By 
Proposition \ref{properties of tc} (3), 
the map $c$ induces  a homomorphism
\index{$F^p\cal B_{dR}$}
\begin{equation}
\label{weight compatible}
c:\, W_n\cal B_B\ot\CC \to W_n\cal B_{dR}\otimes_{\bold k}\CC
\end{equation}
The weight and Hodge filtrations on 
$\cal B_B$ and 
$\cal B_{dR}$
induce those of 
$\cal H_B$ and
$\cal H_{dR}$, respectively.

\begin{proposition-definition}
\label{hodge tate}
\begin{enumerate}
\item The map $c$ induces a filtered quasi-isomorphism
$\cal B_B\otimes \CC\to \cal B_{dR}$ with respect to $W$.
\item
\label{gr of r is degree r part}
We have a canonical isomorphism of vector spaces
$$
Gr^W_{2r}\cal H_B\to (\chi_N)_r
$$
\item
Via the isomorphism $c:\cal H_B\otimes \CC\to \cal H_{dR}$,
the pair of filtered vector spaces
$\cal H_{Hg}=(\cal H_B,\cal H_{dR},W,F)$
\index{$\cal H_{Hg}$} becomes a
ind-mixed
Tate Hodge structure.
\end{enumerate}

\end{proposition-definition}
\begin{proof} 
(1)  One sees that the quotient $Gr_{2r}^W\cal B_B$  is the tensor product
\[ 
B(N)_r\ot AC^{\bullet}
\]
as a complex. By Proposition \ref{properties of tc} (4) the map $c$ induces a quasi-isomorphism
\newline $Gr_{2r}^W\cal B_B\ot \CC\to Gr_{2r}^W\cal B_{dR}$.
 
(2)
 We consider the spectral sequences for the filtration $W$:
 \begin{align*}
E_1^{p,q}=H^{p+q}(Gr^W_{-p}\cal B_B) &\Rightarrow E^{p+q}=
 H^{p+q}(\cal B_B) \\
\ '  E_1^{p,q}=H^{p+q}(Gr^W_{-p}\cal B_{dR}\otimes_{\bold k}\CC) &\Rightarrow \ 'E^{p+q}=
 H^{p+q}(\cal B_{dR}\otimes_{\bold k}\CC). 
 \end{align*}
Since the morphism of complexes $c:\,\cal B_B\otimes \CC\to \cal B_{dR}\otimes_{\bold k}\CC$
 is a filtered quasi-isomorphism, the morphism of spectral sequences
 
  $$E_*^{*,*}\otimes \CC \to \ 'E_*^{*,*}$$
 is an isomorphism. Note that we have 
$$E_1^{p,q}\ot \C=H^{p+q}(Gr^W_{-p}({\cal B}_B\ot \C))$$ since tensoring with $\C$ is an exact functor.
Since the complex $\cal B_{dR}$ is  the direct sum 
$\dis\underset{r}\oplus B(N)_r\ot \bold k$, 
the spectral sequence $\ 'E_*^{*,*}$ degenerates at $E_1$-term, and
as a consequence  $E_*^{*,*}$ also degenerates at $E_1$-term.
 Therefore the vector space
$Gr_{2r}^WH^0(\cal B_B)$ is canonically isomorphic to $H^0(Gr_{2r}^W\cal B_B)$.
By Proposition \ref{properties of tc} (1) we have
$$
H^0(Gr^W_{2r}\cal B_B)
={\chi_N}_r\ot H^0(AC^{\bullet})={\chi_N}_r\otimes \Q.
$$

(3)  We need to show the following.
\begin{enumerate}
\item The filtrations $F$ and $\ov{F}$ on  $Gr_{2r}^W\cal H_{dR}\otimes_{\bold k}\CC$ are
$2r$-opposite (\cite{D1}, (1.2.3)).

\item $(F^p\cap \ov{F^{2r-p}})Gr_{2r}^W\cal H_{dR}\otimes_{\bold k}\CC=0$
for $p\neq r$.
\end{enumerate}
We denote $Gr_{2r}^W\cal H_{dR}=A$. 
We have 
\[F^p(A)=\left\{
\begin{array}{ll}
A& p\leq r\\
0&p>r
\end{array}
\right.\]
By taking complex conjugate, a similar fact holds for $\ov{F^p}(A)$. The assertions (1) and (2) follow from this.

\end{proof}

\subsection{A Hodge realization functor for mixed Tate motives}
\label{subs:Hodge real}

Let $V=\oplus_r V_r$ be an Adams graded $\Q$-vector space. The mixed Tate 
Hodge structure associated to $V$ which is denoted by
$V_{Hg}$, is defined as follows. $V_{Hg}$ is the triple $(V_b, V_{dR},c)$.
$V_b=V$ as a $\Q$-vector space, and the weight filtration is defined
so that $W_nV_b=\oplus_{2r\leq n}V_r$. $V_{dR}=V\otimes \C$ as a vector space,
and  $W_nV_{dR}=\oplus_{2r\leq n}V_r\ot \C$. The Hodge filtration
is defined by $F^pV_{dR}=\oplus_{r\geq p} V_r\ot \C$. We have a canonical isomorphism
$\text{gr}^W_{2r} V_b=V_r$. The comparison map $c$ is defined on 
$\text{gr}^W_{2r} V_b=V_r$ to be multiplication by $(2\pi i)^{-r}$.  
Let $V$ be an Adams graded ${\chi_N}$-co-module.
The coproduct $\Delta_V:\,V\to V\ot {\chi_N}$ induces a map of
mixed Tate Hodge structures
$$\Delta_{V_{Hg}}:\,\,V_{Hg}\to V_{Hg}\ot {\chi_N}_{Hg}.$$
Similarly the coproduct 
$$\Delta_b:\,{\cal H}_B\to \chi_N
\ot {\cal H}_B$$ 
and
$$\Delta_{dR}:\,{\cal H}_{dR}\to \chi_N
\ot {\cal H}_{dR}$$ 
induce a map of mixed Tate Hodge structures
$$\Delta_{Hg}:\,\,{\cal H}_{Hg}\to {\chi_N}_{Hg}\ot {\cal H}_{Hg}.$$

\begin{definition}[Realization functor]
\label{defphi}
We define the functor $\Phi$ 
from the category ${\rm co-rep}^f(\chi_N)$ 
of Adams graded $ {\chi_N}$-co-modules to that of
mixed Hodge structures $(MH)$ by
\index{$\Phi(V)$}
$$
\Phi(V)=\ker(\Delta_{V_{Hg}}\otimes \id- \id\otimes \Delta_{Hg}:\,\,
V_{Hg}\otimes {\cal H}_{Hg}\to V_{Hg}\ot {\chi_N}_{Hg}\ot {\cal H}_{Hg}).
$$
It is called the Hodge realization functor.

\end{definition}

\subsection{Hodge realization of the category of flat connections}
\label{flat conn}
We will show that the Hodge realization functor $\Phi$ given in Definition \ref{defphi} is equivalent to a simpler functor 
under the assumption that the Beilinson-Soul\'e vanishing conjecture holds. 
Let $A$ be an Adams graded commutative dga (cdga) and let $M$ be an Adams graded $\Q$-vector space.
An $A$-connection for $M$ is a map of graded vector spaces
\[\Gamma:\quad M\to M\otimes A^1\] 
of Adams degree 0. This name is taken from \cite{Lev}. This is called a twisting matrix in \cite{KM}.  $\Gamma$ is said to be flat if $d\Ga+\Ga^2=0.$ Here
$d\Ga$ is the map
\[M\overset{\Ga}\to M\otimes A^1\overset{1\otimes d}{\to} M\otimes A^2\]
and $\Ga^2$ is
\[M\overset{\Ga}\to M\otimes A^1\overset{\Ga\otimes 1}{\to} M\otimes A^1\otimes A^1\overset{1\otimes m}\to M\otimes A^2\]
where $m$ is the multiplication of $A$.  We denote by $\text{Conn}_A^0$ the category of flat $A$-connections, and
let $\text{Conn}_A^{0f}$ be its full subcategory of  flat connections on finite dimensional Adams graded $\Q$-vector spaces.

 Now we assume that $A$ is cohomologically connected. We denote by $A\{1\}$ the 1-minimal model of $A$. 
For the defintion of 1-minimal model, see for example Definition 1.30 of \cite{Lev}. 
Set $QA:=A\{1\}^1$ and let $\part: QA\to \wedge ^2QA$ denote the differential $d:\,A\{1\}^1\to \wedge^2A\{1\}^1
=A\{1\}^2$. Then $(QA,\part)$ is a co-Lie algebra over $\Q$. Since $A$ is Adams graded, $QA$ is also Adams graded.
Let $\text{co-rep}(QA)$ be the category of Adams graded co-modules over $QA$, and let 
$\text{co-rep}^f(QA)$ be the category of finite dimensional Adams graded co-modules over $QA$. 
We denote by $\chi_A$ the Adams graded Hopf algebra $H^0(B(A))$.  Let 
$\text{co-rep}^f(\chi_A)$ denote the category of finite dimensional Adams graded co-modules over 
$\chi_A$. By Theorem 1.2 and Theorem 2.10 of \cite{KM} we have  the equivalence of filtered neutral Tannakian categories
\[\text{co-rep}^f(\chi_A)\sim \text{co-rep}^f(QA)\sim \text{Conn}^{0f}_{A\{1\}}.\]

We now consider the case where $A$ is the cycle algebra $N$ defined in \cite{BK}. Beilinson-Soul\'e
conjecture implies that $N$ is cohomologically connected. For each field $k$, Hanamura constructed a triangulated category ${\cal D}(k)$
of motives over $k$ in \cite{Ha1} and  \cite{Ha2}. Let ${\cal D}T(k)$ be the sub triangulated category of ${\cal D}(k)$
generated by Tate objects. 
If we assume Beilinson-Soul\'e 
vanishing conjecture and $K(\pi,1)$ conjecture which asserts that $A\{1\}$ is quasi-isomorphic to $A$, 
the category $\text{Conn}^{0f}_{N\{1\}}$ should  be equivalent 
to the heart of the category ${\cal D}T(k)$ for a certain $t$-structure. We will show that there is a natural Hodge realization of $\text{Conn}^{0f}_{N\{1\}}$,
which is compatible with the functor $\Phi$ defined  in Definition \ref{defphi} under the equivalence of the categories
$MTM_k=\text{co-rep}^f(\chi_N)\sim \text{Conn}^{0f}_{N\{1\}}$. Let $M=\oplus_r M_r$ be a finite dimensional Adams graded
$\Q$-vector space with a flat connection $\Ga:\,\,M\to M\otimes QN$. The Hodge complex associated to
$M$ is the triple $(M_b,\,M_{dR},\,c)$ defined as follows. $M_b=M\otimes AC^\bullet$ as a bigraded $\Q$-vector space.
Here  $AC^{\bullet}$ is the complex of semi-algebraic chains defined in Definition \ref{def:admissible triple complex}. The differential of $M_b$
is the sum $d_1+d_2$ where $d_1=1\otimes d_{AC^\bullet}$ and $d_2$ is the composite $M\otimes AC^\bullet
\overset{\Ga\otimes 1}\to M\otimes N\{1\}^1\otimes AC^\bullet \overset{1\otimes m} \to M\otimes AC^\bullet$.
Here $m$ is the multiplication $ N\{1\}^1\otimes AC^\bullet\to AC^\bullet$. The weight filtration
of $M_b$ is defined by 
$$W_nM_b=\bigoplus_{2r\leq n}M_r\otimes AC^\bullet.$$
The complex $M_{dR}:=M\otimes \bold k$ concentrated in cohomological degree 0. 
The weight filtration of $M_{dR}$ is defined by
$$W_nM_{dR}=\bigoplus_{2r\leq n}M_r\otimes \bold k$$
and the Hodge filtration of $M_{dR}$
is defined by $$F^pM_{dR}=\bigoplus_{p\leq r} M_r\otimes \bold k.$$ 
We have a canonical isomorphism 
$$\text{gr}^W_{2r}(M_b)\simeq M_r\otimes AC^\bullet$$
and
$$\text{gr}^W_{2r}(M_{dR})\simeq M_r\otimes \bold k.$$
The comparison map $c:\,\,M_b\to M_{dR}\otimes_{\bold k}\CC$ is defined to be $\text{id}\otimes(2\pi i)^{-r} I$ on
$M_r\ot AC^\bullet$ where $I:\,AC^\bullet \to \C$
is the map defined in Proposition \ref{properties of tc} (3).

\begin{proposition}
\label{hodgeconn}
The triple $(H^0(M_b),\,H^0(M_{dR})(=M_{dR}),\,c)$ is a mixed Tate Hodge structure. 
\end{proposition}
\begin{proof}
Consider the spectral sequence
$$E_1^{p,q}=H^{p+q}(\text{gr}^W_{-p}M_b)\Rightarrow E_\infty^{p,q}
=\text{gr}^W_{-p}H^{p+q}(M_b)$$
resp.
$$'E_1^{p,q}=H^{p+q}(\text{gr}^W_{-p}M_{dR}\otimes_{\bold k}\CC)\Rightarrow 'E_\infty^{p,q}
=\text{gr}^W_{-p}H^{p+q}(M_{dR}\otimes_{\bold k}\CC).$$ The map $c$ induces
a map of weight filtered complexes from $M_b$ to $M_{dR}$,
so that a map of spectral sequences from $E_r^{p,q}$ to
${}'E_r^{p,q}$. 
Since tensoring with $\C$ is an exact functor, we have 
$$E_1^{p,q}\otimes \C= H^{p+q}(\text{gr}^W_{-p}(M_b\otimes \C)).$$
We have $\text{gr}^W_{-p}(M_b\otimes \C)=M_{-p/2}\otimes AC^\bullet
\otimes\C$, and it follows that the map $c$ induces an isomorphism
from $E_1^{p,q}\otimes \C$ to ${}'E_1^{p,q}$. Hence we see that the 
map of spectral sequences  $E_r^{p,q}\otimes \C\to {}'E_r^{p,q}$ induced
by $I$ is an isomorphism. It follows that we have an
isomorphism of weight filtered vector spaces
$$H^0(M_b)\otimes \C\to H^0(M_{dR}\otimes_{\bold k}\CC).$$
we also need to check that
\begin{enumerate}
\item The filtrations $F$ and $\ov{F}$ on  $Gr_{2r}^WH^0(M_{dR})$ are
$2r$-opposite (\cite{D1}, (1.2.3)).

\item $(F^p\cap \ov{F^{2r-p}})Gr_{2r}^WH^0(M_{dR})=0$
for $p\neq r$.
\end{enumerate} This follows  easily from the definition.
\end{proof}

\begin{definition}
\label{defpsi}
The Hodge realization $\Psi(M)$ of $M$ is defined to be the triple
\newline $(H^0(M_b),\,H^0(M_{dR})(=M_{dR}),\,c)$. 
\end{definition}

\begin{remark} There should be an explanation of the reason for this definition of the Hodge realization of
$\text{Conn}^{0f}(N_{\{1\}}).$ It comes from an idea of construction of the Hodge realization
$$\rho:\,\,{\cal D}T(\C)\to  \text{MHC}$$ where MHC denotes the category of
mixed Hodge complexes. We note that what follows is an explanation of an idea,
and is not a rigorous argument. An object of ${\cal D}T(\C)$ is of the form
$$K=(K^m;\,f^{m,n})$$
where each $K^m$ is a direct sum of the elements of the form
$$(pt,\,r),\,\,r\in \ZZ,$$
and each $f^{m,n},\,m<n$ is an element of the cycle complex
$Z^*({\rm Spec}\,\C,*)$. Moreover the system $\{f^{m,n}\}_{m,n}$ satisfies
a certain cocycle condition. An object like $K$ is called a diagram.
For each cube $\square^t$ consider the Hodge complex $\Ga(\square^t)$.
Since Hodge complex is a contravariant functor,
we have a double complex
$$\cdots \to \Ga(\square^t)\overset{\part}\to \Ga(\square^{t-1})
\to \cdots \to \Ga(\square^0)\to 0.$$ Let $\Ga(pt)$ be the total complex associated 
to this double complex. For an object like $K$, there corresponds a Hodge complex $\Ga(K^m)$ for each $m$.
Moreover, for each $f^{m,n}$ there exists a map of Hodge complexes
$$F^{m,n}:\,\,\Ga(K^m)\to  \Ga(K^n)$$ of degree $-(n-m-1),$
and the system $\{F^{m,n}\}_{m,n}$ satisfies a cocycle condition. It follows that there exists a
diagram
$$(\Ga(K^m);\,F^{m,n})$$
of Hodge complexes. We denote this by $\Ga(K)$. The functor $\rho$ is obtained by associating 
$\Ga(K)$ to each $K$.  A more explicit description of $\Ga(K)$ is given as follows. 
The betti part of $\Ga(pt)$ is equal to
$AC^\bullet$. For each $\square^t$ let $A^*_{dR}(P^t(\log {{\bf D}^t}))^{\rm alt}$ be the complex of the 
smooth forms on $P^t$ with logarithmic poles along ${{\bf D}^t}$, which are alternating for the group action. Consider the double complex
$$\cdots \to A^*_{dR}(P^t(\log {{\bf D}^t}))^{\rm alt}\overset{\part}\to A^*_{dR}(P^{t-1}(\log {{\bf D}^{t-1}}))^{\rm alt}
\to \cdots \to A^*_{dR}(P^0)\to 0.$$ The de Rham part of $\Ga(pt)$ which we denote by $A^*_{dR}(pt)$, is the associated total complex to the above double complex.
The map $F^{m,n}$ for $m<n$ is described as follows. The element $f^{m,n}$ belongs to the cycle algebra $N$. The map 
$F^{m,n}$ on $AC^\bullet$  is induced by the $N$-module structure on $AC^\bullet$. As for the de Rham  
part, let $C(f^{m,n})$ be a differential form which represents the cohomology class of the cycle $f^{m,n}$.
Then the map 
$$F^{m,n}:\,\,A^*_{dR}(K^m)\to A^*_{dR}(K^n)$$  is given  by the cup product with $C(f^{m,n})$. 
For a positive integer $p$, consider the truncated complex
$$0\to \Ga(\square^p)\overset{\part}\to \Ga(\square^{p-1})\to \cdots \to \Ga(\square^0)\to 0.$$ The total
complex associated to this complex which we denote by
$\Ga(\square^p,\part \square^p)$, is the Hodge complex of the relative variety $(\square^p,\part \square^p)$. We denote the dual of 
$\Ga(\square^p,\part \square^p)$ by  $\Ga_c(\square^p-\part \square^p)$. The de Rham part of $\Ga_c(\square^p-\part \square^p)$ which we denote by
${\Ga_c}_{dR}(\square^p-\part \square^p)$,
is isomorphic to the complex generated by  the logarithmic form
$\omega_p$ concentrated in cohomological degree 0. Suppose that for a pair $(m,n)$ with $m<n$,
the element $f^{m,n}$ belongs to $Z^r({\rm Spec}\,\C, q)$. Consider the projector
$${\rm Alt}:\,\square^p\times \square^q\to \square^{p+q}$$
and the projection
$$pr_1:\,\,\square^p\times \square^q\to \square^p.$$
We denote by  $|f^{m,n}|$ the support of the cycle $f^{m,n}$. Let $\iota:\,\,\square^p\times |f^{m,n}|\to \square^{p+q}$
be the inclusion. 
Then the dual of the de Rham part of the map $F^{m,n}$ is equal to ${pr_1}_*({\rm Alt}\circ \iota)^*(\omega_{p+q})$
where $${pr_1}_*:\,\,A^*(\square^p\times |f^{m,n}|)\to A^{*-2\dim |f^{m,n}|}(\square^p)$$
is the integration along $|f^{m,n}|.$ We see that de Rham part of the dual of the map $F^{m,n}$ is zero for reason of type.  Hence the de Rham part of the map $F^{m,n}$ is also zero.
The betti part of the complex $\Ga(\square^p,\part \square^p)$,
which we denote by $\Ga_B(\square^p,\part \square^p)$,  is the total complex 
associated to the double complex
$$0\to AC_*(P^p,{{\bf D}})^{\rm alt}\overset{\part}\to
AC_*(P^{p-1},{{\bf D}})^{\rm alt}\to\cdots \to AC_*(P^0)\to 0.$$
The duality pairing
$$ {\Ga_c}_{dR}(\square^p-\part \square^p)\times \Ga_B(\square^p,\part \square^p)
\to \C$$
is given by the map $I$. See \cite{Ki} for more detail.
\end{remark}
 We will show that the  the Hodge realizations $\Psi$ and $\Phi$ 
defined in definition \ref{defphi}  are equivalent.
 By iterating the map $\Ga$ we obtain a map $\Ga_n:\,M\to M\otimes (N\{1\}^1)^{\otimes n}$
for each $n\geq 0$. Taking the sum $\sum_{n\geq 0}\Ga_n$, we obtain a map
$\Delta_M\to M\otimes T(N\{1\}^1)$. Here $T(N\{1\}^1)=\oplus_{s\geq 0}(N\{1\}^1)^{\ot s}$. 
Since the connection $\Ga$ is flat, $\Delta_M$ is actually a map
$M\to M\otimes H^0(B(N\{1\}))$. The maps $\Delta_M$ induces an equivalence of the categories
\[ \text{Conn}^{0f}_{N\{1\}}\to \text{co-rep}^f(\chi_A).\]
Let $\Delta_{M_b}$ be the map
\[M\otimes AC^\bullet\overset{\Delta_M\otimes 1}\to M\otimes T(N\{1\}^1)\otimes AC^\bullet.\]
It follows from the definition of the bar complex $B(N,AC^\bullet)$ that $\Delta_{M_b}$ defines a map
of complexes
\[\Delta_{M_b}:\,\,M_b\to M\otimes B(N,AC^\bullet).\]
Similarly, we have the map
\[\Delta_{M_{dR}}:\,\,M_{dR}\to M\otimes B(N,\C).\]
Let ${\cal M}_b$ resp. ${\cal M}_{dR}$ be
\[
\text{Ker}(\Delta_M\otimes 1-1\otimes \Delta_b:\,\,
M\otimes H^0(B(N,AC^\bullet))\to M\otimes {\chi_N}\otimes
H^0(B(N,AC^\bullet)))\]
resp.
\[ 
\text{Ker}(\Delta_M\otimes 1-1\otimes \Delta_{dR}:\,\,
M\otimes H^0(B(N,\C))\to M\otimes {\chi_N}\otimes
H^0(B(N,\C))).\] The Hodge structure $\Phi(M)$ is equal to the triple
$({\cal M}_b, {\cal M}_{dR},c)$ where 
$c$ is the map defined in Subsection \ref{Hhodge}.

\begin{theorem}
\label{comparison}
The pair $(\Delta_{M_b},\Delta_{M_{dR}})$ induces a map of mixed Hodge 
structures
\[(H^0(M_b),H^0(M_{dR}),c)\to ({\cal M}_b,{\cal M}_{dR},c)\]
which is an isomorphism.
\end{theorem}
\begin{proof} Since $M$ is a co-module over ${\chi_N}$, $\Delta_{M_b}(M_b)$
is contained in the kernel of the map 
$$\Delta_M\otimes 1-1\otimes \Delta_b:\,\,
M\otimes B(N,AC^\bullet)\to M\otimes B(N)\otimes B(N,AC^\bullet).$$
Hence $\Delta_{M_b}$ induces a map from $H^0(M_b)$ to
${\cal M}_b$.  We need to show that it is an isomorphism of weight filtered
vector spaces. 
By considering the spectral sequence used in the proof 
of Proposition-Definition \ref{hodge tate}, we have an isomorphism of weight 
filtered vector spaces
$$M\otimes H^0(B(N,AC^\bullet))\otimes \C
\to M\otimes H^0(B(N,\C))$$
resp. 
$$M\otimes {\chi_N}\otimes H^0(B(N,AC^\bullet))\otimes \C
\to M\otimes {\chi_N}\otimes H^0(B(N,\C)).$$
So our assertion is follows from the fact that the map
induced by $\Delta_{M_{dR}}$

$$H^0(M_{dR})=M\otimes \C
\to \text{Ker}(\Delta_M\otimes 1-1\otimes \Delta:\,\,
M\otimes H^0(B(N,\C))\to M\otimes {\chi_N}\otimes H^0(B(N,\C)))$$
is an isomorphism of weight filtered vector spaces. This is true since $M$ is an ${\chi_N}$-co-module, and ${\chi_N}$ is an Adams graded vector
space. The compatibility of the Hodge filtrations also follows from this fact.

\end{proof}
\subsection{The case of dilogarithm}
Using the above notations, we describe the Hodge realization
of the co-module over $\chi_N$
associated to dilogarithm functions after Section 9 of \cite{BK}.

We assume $\bold k\subset\CC$ and $a\in \bold k^{\times}-\{1\}$.
We define elements $\rho_1(a) \in N^1_1$ and $\rho_2(a) \in N^1_2$ by
\begin{align*}
&\rho_1(a)=\{(1-a)\in \PP^1\}^{\rm alt} \\
&\rho_2(a)=\{(x_1,1-x_1,1-\dfrac{a}{x_1})\in (\PP^1)^3\} ^{\rm alt}
\end{align*}
Then we have the following relations:
$$
\partial \rho_2(a)=-\rho_1(1-a)\cdot \rho_1(a),\quad \partial \rho_1(a)=\partial \rho_1(1-a)=0.
$$
Therefore the elements $\bold{Li}_1(a), \bold{Li}_2(a)$ defined as follows
are closed elements in $B(N)$,
and thus they define elements in $\chi_N$.
\begin{align*}
&\bold{Li}_2(a)=[\rho_2]-[\rho_1(1-a)|\rho_1(a)], \\
&\bold{Li}_1(a)=[\rho_1(a)].
\end{align*}
Let $V=V_2\oplus V_1\oplus V_0$ be the sub $\chi_N$-co-module of $\chi_N$ generated by
$e_2:=\bold{Li}_2(a),\,e_1:=\bold{Li}_1(1-a)$ and $e_0:=1$. We have
\begin{align*}
\Delta_V(e_2)&=e_2 \otimes 1-e_{1}\otimes \bold{Li}_1(a)+e_{0}\otimes \bold{Li}_2(a),\\
\Delta_V(e_{1})&=e_{1} \otimes 1+e_{0}\otimes \bold{Li}_1(1-a),\\
\Delta_V(e_{0})&=e_{0}\otimes 1.
\end{align*}
We regard $V$ as an object of $\text{Conn}^{0f}_{N\{1\}}$. Then we have
\begin{align*}
\Ga(e_2)&=-e_1\otimes \rho_1(a)+e_0\otimes \rho_2(a),\\
\Ga(e_1)&=e_0\otimes \rho_1(1-a),\\
\Ga(e_0)&=0.
\end{align*}
We will give a description of $\Psi(V)$.
Assume that $a$ is contained in $\RR$ and assume that $0<a<1$.
We consider elements $\eta_1(0), \eta_2(1),\eta_2(0)$ in $AC^0$
defined by
\begin{align*}
&\eta_1(0)=\{(1-t_0)\in \PP^1\mid 0<t_0<a\}^{\rm alt}, \\
&\eta_2(1)=\{(x_1,1-x_1,1-\frac{t_1}{x_1})|\,x_1\in \P^1_\C-\{1\},\,0<t_1<a\}^{\rm alt},\\
&\eta_2(0)=\{(t_1,1-t_0)|\,0<t_0<t_1<a\}^{\rm alt},
\end{align*}
where the orientation of $\eta_2(0)$ is defined so that $(t_1,t_0)$ are  positive coordinates. 
We have the relations
\begin{align*}
&\delta\eta_1(0)=\rho_1(a),\\
&\part\eta_1(0)=0,\\
&\delta\eta_2(1)=\rho_2(a),\\
&\delta \eta_2(0)=-\part \eta_2(1)+\rho(1-a)\cdot \eta_1(0),\\
&\part\eta_2(0)=0.
\end{align*}
Let $\xi_1(a)=\eta_1(0)$, and $\xi_2(a)=\eta_2(1)+\eta_2(0)$ be  chains of $AC^0.$
We have the equalities
\begin{align*}
&d(\xi_1(a))=-\rho_1(a),\\
&d(\xi_2(a))=-\rho_2(a)+\rho_1(1-a)\cdot \xi_1(a).
\end{align*}
The elements 
\begin{align*}
&v_2:=e_2-e_1\otimes \xi_1(a)+e_0\otimes \xi_2(a)\\
&v_1:=e_1+e_0\otimes \xi_1(1-a)\\
&v_0:=e_0
\end{align*}
form a basis of $H^0(V_b)$. 
We have the relation
\begin{align*}
c(v_2)&=(2\pi i)^{-2}(e_2+e_{1}Li_1(a)+e_{0}Li_2(a)),\\
c(v_{1})&=(2\pi i)^{-1}(e_{1}+ e_{0}\log a),\\
c(v_{0})&=e_{0}.
\end{align*}
So the period matrix of the Hodge realization  $\Psi(V)$ of $V$ with respect to
the de Rham basis $e_0\in W_0\cap F^0$, $e_1\in F^1\cap W_1$ and 
$e_2\in F^2\cap W_2$ is equal to
\[
\left(
\begin{array}{ccc}
 (2\pi i)^{-2}&0&0\\
Li_1(a) (2\pi i)^{-2}&(2\pi i)^{-1}&0\\
Li_2(a)(2\pi i)^{-2}&\log a(2\pi i)^{-1}&1
\end{array}
\right)
\] which is compatible with the one given in \cite{BD}.

\appendix

\section{Proof of the moving lemma}
\label{section moving lemma}
Before the proof, we recall the following three theorems.

\begin{theorem}{(\cite{Ze} Ch.6, Theorem 15}) 
\label{Zeeman moving}
Let $M$ be a compact $PL$-manifold and 
let $K$ be a $PL$-triangulation of $M$. Let $X$,
$X_0$ and $Y$ be subpolyhedra of $M$ such that $X_0\subset X$. Then there exists
an ambient $PL$ isotopy $h:\,\,M\times [0,1]\to M$ such that $h(x,t)=x$ for any point
$x\in X_0$ and $t\in [0,1]$ (we refer to this fact by saying that {\it h fixes $X_0$}), and such that $h_1(|X|-X_0)$ is in general
position with respect to $Y$ i.e. the inequality
\[\dim (h_1(|X|-X_0)\cap Y)\leq \dim (|X|-X_0)+\dim Y-\dim M\]
holds. Here $h_t(m)=h(m,t)$ for $m\in M$ and $t\in [0,1]$.
\end{theorem}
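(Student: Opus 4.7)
The plan is to proceed by a vertex-perturbation argument followed by an application of the PL isotopy extension theorem. First I would pass to a common subdivision $K'$ of $K$ so that the subpolyhedra $X_0$, $X$ and $Y$ are all subcomplexes of $K'$; this is possible because any finite collection of subpolyhedra of a PL manifold admits a common refinement, and the conclusion is insensitive to the choice of triangulation. Let $V$ denote the set of vertices of $K'$ contained in $|X|\setminus X_0$. These are precisely the vertices that I am allowed to move.

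Next I would construct the perturbation of $|X|\setminus X_0$ by small PL displacements of the vertices in $V$. For each $v\in V$ choose a PL ball neighborhood $B_v\subset M\setminus X_0$ (possible because $X_0$ is closed). Inside each $B_v$, pick a new position $v'$ close to $v$ so that the collection $\{v'\}_{v\in V}$ is in \emph{general position} with respect to $Y$ in the affine sense: for every simplex $\sigma=[v_0,\ldots,v_k]$ of $K'$ whose vertices belong to $X_0\cup V$, the affine span of the perturbed simplex $\sigma'=[v_0',\ldots,v_k']$ (where $v_i'=v_i$ if $v_i\in X_0$) meets the affine span of each simplex $\tau$ of $Y$ in the minimal possible dimension, namely $\max(-1,k+\dim\tau-\dim M)$. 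Such generic positions exist because the bad set (non-generic choices) is a finite union of proper algebraic subvarieties of the product of the neighborhoods $B_v$, hence has empty interior. Defining $h_1^X$ by sending each point $x=\sum\lambda_i v_i$ of a simplex of $|X|$ to $\sum\lambda_i v_i'$ produces a PL embedding of $|X|$ into $M$ that fixes $X_0$ pointwise and whose image of $|X|\setminus X_0$ meets $Y$ in the required dimension. The straight-line homotopy $h_t^X(x)=\sum\lambda_i((1-t)v_i+tv_i')$ gives a PL isotopy of $|X|$ in $M$ fixing $X_0$, provided the perturbation is small enough that the simplices remain embedded in $M$ throughout (non-degeneracy is an open condition).

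The last step is to promote this isotopy of $|X|$ inside $M$ to an ambient PL isotopy of $M$ itself. For this I would invoke the PL isotopy extension theorem (Hudson, Zeeman): any PL isotopy of a compact subpolyhedron of a PL manifold that is the identity on a closed subpolyhedron extends to an ambient PL isotopy that is the identity on that subpolyhedron. Applied to our $h_t^X$ with fixed set $X_0$, this yields $h:M\times[0,1]\to M$ with $h_t|_{X_0}=\mathrm{id}$ for all $t$ and $h_1$ carrying $|X|\setminus X_0$ into general position with $Y$.

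I expect the main obstacle to be the careful handling of vertices near $X_0$: one must ensure that the bookkeeping of which vertices are fixed respects the boundary $X_0\cap(|X|\setminus X_0)$, and that the straight-line homotopy does not accidentally push any simplex across $X_0$. Secondarily, the appeal to the PL isotopy extension theorem is nontrivial and requires that $M$ is a PL manifold (used here), together with the compactness assumption to control the support of the extension. Once these ingredients are in place, the dimension count guaranteeing general position is immediate from the genericity of the perturbed vertex positions.
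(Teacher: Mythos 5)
The paper does not prove this statement at all: it is quoted verbatim from Zeeman's notes (\cite{Ze}, Ch.~6, Theorem~15) and used as a black box in the proof of the moving lemma, so your proposal can only be judged on its own merits, not against an argument in the text.

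Your overall strategy (shift the vertices of $X$ not lying on $X_0$ to generic positions and realize the shift ambiently) is indeed the classical route, but as written there are two genuine gaps. First, the construction of the perturbed complex and of the ``straight-line homotopy'' $h_t^X(x)=\sum\lambda_i((1-t)v_i+tv_i')$ uses an affine structure that $M$ does not have: $M$ is an abstract compact PL manifold, so ``the affine span of $[v_0',\dots,v_k']$'', the straight simplex spanned by vertices moved into different balls $B_v$, and the ``finite union of proper algebraic subvarieties of the product of the neighborhoods'' are not defined, and even if one embeds $M$ in some $\RR^N$ the straight simplices and straight-line tracks will in general leave $M$. The correct formulation moves each vertex $v$ only within its open star and extends the move conewise over the link (equivalently, works inside a coordinate ball where a genuine linear structure is available), and the genericity count must be carried out in the link of each simplex of $X_0$, with the dimension shift coming from the cone structure; this is exactly how Zeeman's induction runs. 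Second, the final appeal to ``the PL isotopy extension theorem'' is not available in the generality you need: the Hudson--Zeeman extension theorem applies to proper, locally unknotted (allowable) isotopies of manifolds, and an arbitrary PL isotopy of an arbitrary compact subpolyhedron of $M$ need not be covered by an ambient isotopy (codimension-two knotting already obstructs this). The standard repair is to make the motion ambient from the outset: each vertex shift is supported in the closed star of the vertex, which is a cone on its link, so the shift extends to a PL homeomorphism of $M$ fixing the complement of the open star and is ambient-isotopic to the identity by the Alexander (coning) trick; composing finitely many such supported moves, chosen so that no star of a moved vertex meets $X_0$, yields the required ambient isotopy fixing $X_0$. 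With these two repairs your argument becomes essentially Zeeman's proof; without them the key constructions are not defined and the extension step is unjustified.
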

We need the following variant of this theorem.

\begin{theorem} 
\label{Zeeman moving2}
Let $M$ be a compact $PL$-manifold and 
let $K$ be a triangulation of $M$. Let $X_0, X$ and $Y_1,\cdots, Y_n$ be subpolyhedra of $M$. Then there
exists an ambient $PL$ isotopy $h:\,M\times [0,1]\to M$ which fixes $X_0$, and such that $h_1(|X|-X_0)$ are in general
position with respect to  $Y_i$ for  $1\leq i\leq n$.
\end{theorem}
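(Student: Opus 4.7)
The plan is to prove Theorem \ref{Zeeman moving2} by induction on $n$, with the base case $n=1$ being Theorem \ref{Zeeman moving} applied directly with $Y = Y_1$.

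For the inductive step, I would first apply the induction hypothesis to obtain an ambient PL isotopy $g : M \times [0,1] \to M$ fixing $X_0$ such that $g_1(|X|-X_0)$ is in general position with each of $Y_1, \dots, Y_{n-1}$. Next I would apply Theorem \ref{Zeeman moving} to the subpolyhedron $g_1(X)$, with fixed subpolyhedron $X_0$ and single target $Y_n$, obtaining an ambient PL isotopy $h'$ fixing $X_0$ with $h'_1(g_1(|X|-X_0))$ in general position with $Y_n$. The candidate isotopy $h$ is the concatenation of $g$ followed by $h'$: it fixes $X_0$, and $h_1(|X|-X_0) = h'_1(g_1(|X|-X_0))$ is in general position with $Y_n$.

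The main obstacle is ensuring that the second isotopy $h'$ does not destroy the general positions with $Y_1, \dots, Y_{n-1}$ already achieved by $g$. I plan to handle this by invoking a strengthened form of Theorem \ref{Zeeman moving}: the proof given in \cite{Ze}, Ch.6, which proceeds by generic shifts of simplices within regular neighborhoods, actually produces an isotopy whose support lies inside any prescribed open neighborhood of the locus where general position currently fails. Choosing this neighborhood sufficiently small makes $h'_1$ a $C^0$-small PL perturbation of the identity. One then invokes the openness of general position under small PL perturbations: the condition $\dim(A \cap Y_j) \leq \dim A + \dim Y_j - \dim M$ on a PL subpolyhedron $A$ is preserved under sufficiently small PL perturbations of $A$, since any accidental higher-dimensional intersection arising in a perturbation would persist to the limit, contradicting the general position of the unperturbed configuration. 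Formalizing this openness via a standard triangulation-and-compactness argument is the principal technical point I anticipate, but it is routine PL topology.
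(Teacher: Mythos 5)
Your reduction breaks down at the ``openness of general position under small PL perturbations'' step, and this is a genuine gap, not a routine technical point. General position is a generic (dense) condition but it is \emph{not} open: an arbitrarily $C^0$-small PL perturbation can destroy it. Concretely, take $M$ a PL surface, $A$ and $Y_1$ two arcs meeting transversally in one point (so $\dim(A\cap Y_1)=0=1+1-2$, i.e.\ general position holds); an arbitrarily small PL move of $A$ near that point can make $A$ and $Y_1$ share a short segment, so the perturbed intersection is $1$-dimensional. Your limiting argument does not rule this out, because the dimension of the intersection is not lower semicontinuous under such limits: the excess segments shrink to the original point as the perturbation size tends to $0$, so nothing ``persists to the limit'' to contradict the unperturbed general position. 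Shrinking the support of $h'$ to a small neighborhood of the bad locus for $Y_n$ does not help either, since dimension is insensitive to metric size; inside that neighborhood $h'_1$ can still push $g_1(X)$ onto $Y_j$ ($j<n$) along a positive-dimensional set. To repair the step you would have to choose $h'$ \emph{generically} so that it simultaneously achieves general position with $Y_n$ and preserves it with $Y_1,\dots,Y_{n-1}$ --- but that simultaneous-genericity statement is essentially the theorem you are trying to prove, so the induction as set up is circular at this point. (In addition, the ``strengthened form'' of Theorem \ref{Zeeman moving} with support in a prescribed neighborhood of the failure locus is not what \cite{Ze}, Ch.~6, Theorem 15 states, and would itself require an argument.)

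For comparison: the paper does not prove Theorem \ref{Zeeman moving2} at all; it states it as a variant of Zeeman's theorem, the point being that Zeeman's proof of general position already works for finitely many target polyhedra at once. The shifts of vertices in that proof are chosen generically, and the genericity constraints imposed by $Y_1,\dots,Y_n$ are finitely many conditions each excluding a lower-dimensional set of shifts, so they can all be satisfied by a single ambient isotopy fixing $X_0$. That simultaneous treatment is the right route (note also that applying Theorem \ref{Zeeman moving} to the single polyhedron $Y=Y_1\cup\cdots\cup Y_n$ does not suffice, since $\dim Y=\max_i\dim Y_i$ and the $Y_i$ relevant here --- the cubical faces --- have different dimensions). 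If you want to keep an inductive structure, the induction must be built into the choice of generic shift at each stage, not recovered afterwards by a smallness argument.
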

\begin{theorem}[Lemma 1.10,\,\cite{Hu}]
\label{simplicial}
Let $f:\, |K|\to |L|$ be a $PL$ map of the realizations of simplicial complexes $K$ and $L$.
 Then there exist subdivisions $K'$ and $L'$ of $K$ and $L$ respectively, such that
$f$ is induced by a simplicial map $K'\to L'$.
\end{theorem}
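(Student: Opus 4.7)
The plan is a three-stage reduction: first, refine $K$ so that $f$ becomes affine on each simplex; second, refine $L$ so that the image of each simplex of the new $K$ is a subcomplex \emph{and} the $f$-image of each vertex is itself a vertex; third, pull back the refined cell structure of $L$ to further refine $K$ into a simplicial complex on which $f$ becomes simplicial.

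\textbf{Stage 1.} By the definition of a PL map, there is a subdivision $K_1$ of $K$ such that for each simplex $\sigma \in K_1$ the restriction $f|_\sigma$ extends to an affine map on the affine hull of $\sigma$. In particular, $f(\sigma)$ is an affine (possibly degenerate) simplex in $|L|$, and $\mathcal F := \{f(\sigma)\}_{\sigma \in K_1}$ is a finite family of subpolyhedra of $|L|$.

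\textbf{Stage 2.} Adjoin to $\mathcal F$ the singletons $\{f(v)\}$ for each vertex $v$ of $K_1$, and apply the PL analogue of Theorem \ref{thm:semi-algebraic triangulation} to obtain a subdivision $L'$ of $L$ in which every member of the enlarged family is realized as a subcomplex. Thus each $f(\sigma)$ is a subcomplex of $L'$, and each $f(v)$ is a vertex of $L'$.

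\textbf{Stage 3.} For each $\sigma \in K_1$ and each simplex $\tau$ of $L'$ with $\tau \subset f(\sigma)$, set $C_{\sigma,\tau} := (f|_\sigma)^{-1}(\tau)$. Since $f|_\sigma$ is affine, $C_{\sigma,\tau}$ is a convex polytope in $\sigma$, and as $\tau$ ranges over the simplices of $L'$ contained in $f(\sigma)$, these polytopes give a polyhedral cell decomposition of $\sigma$. Compatibility across faces of $K_1$ is automatic, since $f|_\sigma$ and $f|_{\sigma'}$ agree on $\sigma \cap \sigma'$. This yields a global polyhedral cell decomposition $\mathcal P$ of $|K|$ in which each cell $c$ is mapped affinely by $f$ onto a unique simplex $\tau(c)$ of $L'$.

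\textbf{Stage 4.} Build a simplicial refinement $K'$ of $\mathcal P$ by induction on the dimension of cells, insisting that every vertex introduced lies in $(f|_\sigma)^{-1}(L'_0)$ for the ambient $\sigma \in K_1$. The $0$-cells of $\mathcal P$ consist of vertices of $K_1$ (each mapped to a vertex of $L'$ by Stage 2) together with isolated intersection points already lying over $L'_0$, so the induction begins. In the inductive step, given a cell $c = C_{\sigma,\tau}$ whose boundary has been triangulated, extend the triangulation to $c$ by coning the boundary toward an interior point chosen from $(f|_c)^{-1}(w)$ for some vertex $w$ of $\tau$: if $\dim c > \dim \tau$ this fiber has positive dimension, so an interior cone point exists; if $\dim c = \dim \tau$, then $f|_c$ is an affine isomorphism, so $c$ is already a simplex whose vertices map bijectively to those of $\tau$. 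By construction, every simplex of the resulting $K'$ is mapped affinely by $f$ into a simplex of $L'$ with vertices sent to vertices, so $f : K' \to L'$ is simplicial.

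\textbf{Main obstacle.} The crux is Stage 4: performing the simplicial refinement of $\mathcal P$ in a way that simultaneously (i) respects the cell decomposition across common faces, and (ii) introduces only interior vertices lying in $f^{-1}(L'_0)$. Condition (i) is handled by the dimension-induction and by the fact that $\mathcal P$ restricts to a cell decomposition on each face of $K_1$; condition (ii) requires that each cone point be drawn from the fiber over some vertex of $\tau(c)$, which is nontrivial only when $\dim c > \dim \tau(c)$ and is then guaranteed by the affine surjectivity of $f|_c$ onto $\tau(c)$. Once this combinatorial bookkeeping is set up, the verification that $f : K' \to L'$ is simplicial is immediate from the affinity of $f$ on each cell.
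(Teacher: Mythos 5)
The paper does not prove Theorem \ref{simplicial} at all (it simply quotes Hudson \cite{Hu}, Lemma 1.10), so the only question is whether your argument stands on its own. Stages 1--3 are sound and are the standard route. The genuine gap is in Stage 4, in the case $1\le\dim\tau(c)<\dim c$: there is no interior point of $c$ lying in $(f|_c)^{-1}(w)$ for a vertex $w$ of $\tau(c)$. Indeed $f|_c$ is an affine surjection of $c$ onto $\tau(c)$, so it maps the relative interior of $c$ onto the relative interior of $\tau(c)$; hence every interior point of $c$ lies over an interior point of $\tau(c)$, and the fiber over $w$, although of positive dimension as you say, is a proper face of the polytope $c$ and is entirely contained in $\partial c$. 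So the cone point you need does not exist precisely in the cases where the coning step is nontrivial (e.g.\ already for a quadrilateral cell mapping onto an edge of $L'$), and the induction cannot proceed as written. A further soft spot: you assert without proof that the $0$-cells of $\mathcal{P}$ other than vertices of $K_1$ ``already lie over'' vertices of $L'$; this is true but needs the following argument, which is where Stage 2 is really used: if $\{p\}=\sigma\cap f^{-1}(\tau)$, let $\sigma_0$ be the carrier of $p$ in $K_1$ and $\tau_0$ the carrier of $f(p)$ in $L'$; since $f(\sigma_0)$ is a subcomplex of $L'$ it contains $\tau_0$, so $\sigma_0\cap f^{-1}(\tau_0)=\{p\}$ maps \emph{onto} $\tau_0$, forcing $\dim\tau_0=0$.

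There are two standard ways to repair Stage 4. (i) Keep your $L'$ and triangulate the cell complex $\mathcal{P}$ \emph{without any new vertices}: order the $0$-cells of $\mathcal{P}$ and, by induction on dimension, cone each cell from its earliest $0$-cell over the (inductively triangulated) union of its faces not containing that $0$-cell; this is compatible across faces and uses only $0$-cells of $\mathcal{P}$ as vertices. Combined with the carrier argument above, every simplex of the resulting $K'$ has its vertices mapped to vertices of $\tau(c)$, hence onto a face of $\tau(c)$, and $f\colon K'\to L'$ is simplicial. (ii) Alternatively, follow the argument of \cite{Hu} (see also \cite{RS}): allow one more subdivision of $L'$, namely a derived subdivision with chosen points $\hat\tau\in\tau^{\circ}$, and choose $\hat c\in c^{\circ}$ with $f(\hat c)=\widehat{f(c)}$ --- such a point exists exactly because interiors map onto interiors --- then the derived subdivision of $\mathcal{P}$ maps simplicially to the derived subdivision of $L'$. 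In other words, the classical proof cones over \emph{interior} points of the target simplices, not over their vertices; insisting that the target complex stay equal to your Stage 2 complex $L'$ is what makes your cone construction impossible.
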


\begin{proof} [Proof of Proposition \ref{prop: moving lemma}]

(1) We prove the surjectivity of the map $\iota$ on homology groups. Let $\gamma$ be a closed element in 
$C_p(K,\bold D^n;\QQ)$ for a  triangulation $K$.  
Let 
$
\gamma=\sum a_\si \sigma 
$ 
be a representative of
 $\gamma$ in $C_p(K;\QQ)$. 
By applying Theorem \ref{Zeeman moving2} to the case where $M=P^n$, $X=|\ga|$, $X_0=\bold D^n$ 
and $Y_i$ the set of cubical faces, 
we have a $PL$ isotopy $h: P^n\times [0,1] \to P^n$
such that 
\begin{enumerate}
\item
$h$ fixes $\bold D^n$, and 
\item
$h_1(|\gamma|-\bold D^n)$ meets each
cubical face properly.
\end{enumerate}
By Theorem \ref{simplicial} there exists a triangulation
${\cal K}$ of $ P^n\times [0,1]$ and a  subdivision $K'$ of $K$  such that 
\begin{enumerate}
\item
The map $h$ is induced by a simplicial map from $\cal K$ to $K'$, and
\item
$\sigma \times [0,1]$, $\sigma \times \{0\}$ and $ \sigma \times \{1\}$ are subcomplexes of ${\cal K}$ for each
simplex $\sigma$ of $ K$.
\end{enumerate}
 Let $\la:\,C_p(K)\to C_p(K')$ be the subdivision operator.
Let
$$
h_*:\,\,C_\bullet (\cal K, \bold D^n\times [0,1];\QQ) \to 
C_\bullet (K', \bold D^n;\QQ)
$$ be the map of complexes induced by $h$. 
For a simplex $\sigma \in K$, the product
$\sigma\times [0,1]$, $\sigma\times \{0\}$ and $\sigma\times \{1\}$
are regarded  elements of $C_{\bullet}({\cal K})$.
For an element $\sigma\in K_p$, we set
\begin{align}
\label{notation isotopy and image}
h_{\sigma}&=h_*(\sigma\times [0,1]) \in C_{p+1} (K', \bold D^n), \\
\nonumber
h_i(\sigma)&=h_*(\sigma\times \{i\})\in C_p (K', \bold D^n)\quad (i=0,1).
\end{align}
Then we have $h_0(\sigma)=\lambda(\sigma)$.
Let $\theta$ be the chain  $\sum_\si  a_\si h_{\sigma}$.
Then we have
\begin{align*}
\delta\theta
=&\sum_{\sigma}
a_\si \delta h_{\sigma} 
\\
=&\sum_{\sigma}
a_\si((-1)^p(h_1(\sigma)-h_0(\sigma))+h_{\delta\sigma})
\\
=&\sum_{\sigma}
a_\si h_{\delta\sigma}
+(-1)^p\biggl(
\sum_{\sigma}
a_\si h_1(\sigma) 
-\sum_{\sigma}a_\si \lambda(\sigma)  \biggr)
\\
=&
\sum_{\rho} \bigg(
\sum_{\rho\p1 \sigma}[\si:\rho]a_\si \bigg)h_{\rho}
+
(-1)^p\biggl(\sum_{\sigma}
a_\si h_1(\sigma) 
-\sum_{\sigma}a_\si \lambda(\sigma)\biggr)
\end{align*}
in $C_p(K';\QQ)$. Here in the last line, the sum in the first term is taken over $(p-1)$-simplexes
of $K$, and $\rho \prec \si$ means that $\rho$ is a face of $\si$. The index of $\si$ with respect to $\rho$
is denoted by $[\si:\rho]$. It is in the set $\{\pm 1\}$. 
Since $\ga$ is closed,  we have $\sum_{\rho\p1 \sigma}[\si:\rho]a_\si=0$ if $\rho\not\subset \bold D^n$. 
Since $\bold D^n$ is fixed by $h$, if $\rho\subset \bold D^n$, then we have $h_\rho=0$.
Thus we have the equality
$$ 
\delta\theta =
(-1)^p\biggl(\sum_{\sigma}
a_\si h_1(\sigma) 
-\lambda(\gamma)\biggr).
$$
Since $|\sum_{\sigma}
a_\si h_1(\sigma) |=h_1(|\ga|)$
by the construction of $h$, it follows that 
$\sum_{\sigma}
a_\si h_1(\sigma) \in AC_p(K', \bold D^n; \QQ)$.

(2) We prove the injectivity of the map $\iota$ on homology.  
Let $\gamma$ be an element in $AC_p(P^n,\bold D^n;\QQ)$ and suppose that 
$\gamma$ is the boundary of an
element $\xi$ in $C_{p+1}(P^n,\bold D;\QQ)$.  
Representatives of $\gamma$ and
$\xi$ in $AC_p(K;\QQ)$ and  $C_{p+1}(K;\QQ)$
are also denoted by $\gamma$ and ${\xi}$. 
By applying Theorem \ref{Zeeman moving2} to the case where $M=P^n$, $X=|\xi|$, $X_0=\bold D^n\cup |\ga|$ 
and $Y_i$ the set of cubical faces, 
we obtain a $PL$ isotopy $h: P^n\times [0,1] \to P^n$ of
$P^n$ such that
\begin{enumerate}
\item
$h$ fixes $\bold D^n\cup |{\gamma}|$, and
\item
 $h_1(|{\xi}|-(\bold D^n\cup |{\gamma}|))$ intersects the cubical faces properly.
\end{enumerate}
By a similar argument as the surjectivity, we see that the chain $h_1(\xi)$ is an element of
$AC_{p+1}(K',\bold D^n; \QQ)$ for a subdivision $K'$ of $K$, and that $\delta h_1(\xi)=\la(\ga)$
where \newline $\la:\,C_\bullet(K,\bold D^n;\QQ)\to C_\bullet(K',\bold D^n, \QQ)$ is the subdivision
operator.
\end{proof}

\section{Properties of cap products}
\label{section homotopy}
\subsection{Some facts on homotopy}
The following proposition is known as acyclic carrier
theorem (See \cite{Mu}, Theorem 13.4, p76).
\begin{proposition}
\label{existence restricted homotopy}
Let $K$ be a simplicial complex and $D_{\bullet}$ be a 
(homological) complex such that $D_i=0$ for $i<0$.
Let $p$ be a non-negative integer, and let $\varphi_{\bullet}:C_{\bullet}(K)\to D_{\bullet-p}$ 
be a homomorphism of complexes.
We suppose that there exists a family of subcomplexes $\{L^{\sigma}_{\bullet}\}_{\sigma\in K}$
of $D_{\bullet}$  
indexed by $\sigma \in K$ which satisfies the following conditions.
\begin{enumerate}
\item
$L^{\tau}\subset L^{\sigma}$ for $\tau,\sigma \in K$ such that $\tau \subset \si$.
\item
$\varphi_{\bullet}(\sigma)\in L^{\sigma}_{\bullet-p}$ for all $\sigma\in K$.
\item
The homology groups $H_k(L^{\sigma}_{\bullet})=0$ for $k>0$.
\item
The homology class of the cycle $\varphi_p(\sigma)$ in $H_0(L^{\sigma})$
is zero for each $p$-simplex $\sigma$.
\end{enumerate}
Under the above assumptions, there exist homomorphisms $\theta_{p+q}:C_{p+q}(K)\to D_{q+1}$ $(q\geq 0)$, 
satisfying the following conditions:
\begin{enumerate}
\item[(a)]
$\delta\theta_{p+q}+\theta_{p+q-1}\delta=\varphi_{p+q}$ for $q\geq 0$. 
Here we set $\theta_{p-1}=0$.
\item[(b)]
$\theta_i(\sigma)\in L^{\sigma}_{\bullet}$.
\end{enumerate}
\end{proposition}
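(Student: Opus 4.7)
The plan is to construct the homomorphisms $\theta_{p+q}$ by induction on $q \geq 0$, defining them simplex-by-simplex and extending $\QQ$-linearly. At each stage the element for which we need a primitive will be shown to be a cycle lying in $L^\sigma$, and the acyclicity hypothesis (3) together with the base case hypothesis (4) will provide the primitive, also inside $L^\sigma$. This gives both conditions (a) and (b) automatically.

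For the base case $q = 0$, let $\sigma$ be a $p$-simplex. By (2) the element $\varphi_p(\sigma)$ lies in $L^\sigma_0$, hence is automatically a $0$-cycle in $L^\sigma$. Hypothesis (4) says its class in $H_0(L^\sigma)$ is trivial, so there exists a chain $\theta_p(\sigma) \in L^\sigma_1$ with $\delta \theta_p(\sigma) = \varphi_p(\sigma)$. Pick one such and extend $\QQ$-linearly to $\theta_p: C_p(K) \to D_1$; condition (a) with $\theta_{p-1} = 0$ and condition (b) are satisfied by construction.

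For the inductive step, suppose $\theta_{p+q-1}$ has been constructed for some $q \geq 1$ satisfying (a) and (b). Given a $(p+q)$-simplex $\sigma$, define
\begin{equation*}
c_\sigma := \varphi_{p+q}(\sigma) - \theta_{p+q-1}(\delta \sigma) \in D_q.
\end{equation*}
By (2), $\varphi_{p+q}(\sigma) \in L^\sigma_q$; and by the inductive (b), $\theta_{p+q-1}$ sends each face $\tau \prec \sigma$ into $L^\tau_q$, which by (1) sits inside $L^\sigma_q$. Hence $c_\sigma \in L^\sigma_q$. Next, using $\varphi_\bullet$ being a chain map and the inductive identity applied to $\delta\sigma$, one computes
\begin{equation*}
\delta c_\sigma = \delta \varphi_{p+q}(\sigma) - \delta\theta_{p+q-1}(\delta\sigma) = \varphi_{p+q-1}(\delta\sigma) - \bigl(\varphi_{p+q-1}(\delta\sigma) - \theta_{p+q-2}(\delta^2\sigma)\bigr) = 0,
\end{equation*}
so $c_\sigma$ is a $q$-cycle in $L^\sigma$. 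Since $q \geq 1$, hypothesis (3) yields $\theta_{p+q}(\sigma) \in L^\sigma_{q+1}$ with $\delta \theta_{p+q}(\sigma) = c_\sigma$; extending linearly gives (a) and (b) in the next degree.

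The only real subtlety is the verification that $c_\sigma$ is a cycle and lies in $L^\sigma$, both of which reduce to one-line computations using (1), (2), and the inductive identity. No ambient choice of a global section is required: working simplex-by-simplex lets us use a different $L^\sigma$ for each, and the nesting property (1) ensures that the chain $\theta_{p+q-1}(\delta\sigma)$, assembled from faces, is viewed inside the larger $L^\sigma$ before we invoke its acyclicity. There is no obstruction to making the choices consistently; any selection suffices.
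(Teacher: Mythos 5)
Your proof is correct and follows essentially the same route as the paper's own argument: induction on $q$, with the base case handled by hypothesis (4) and the inductive step by checking that $\varphi_{p+q}(\sigma)-\theta_{p+q-1}(\delta\sigma)$ is a cycle lying in $L^{\sigma}$ (via (1), (2) and the inductive identity) and then invoking the acyclicity (3). The only difference is a harmless shift in indexing of the inductive step; no gap.
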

\begin{proof}
We construct maps $\theta_{p+q}$ inductively on $q$.
We consider the case where $q=0$.
Let $\sigma$ be a simplex of $K_p$.
Since the homology class of $\varphi_p(\sigma)$ in $H_0(L^{\sigma}_{\bullet})$
is zero, there exists an element $t_{\sigma}\in L^{\sigma}_{1}$ 
such that $\delta t_{\sigma}=\varphi(\sigma)$.
By setting $\theta_p(\sigma)=t_{\sigma}$, we have a map $\theta_p:C_p(K) \to D_1$.

We assume that $\theta_{p+q}$ is constructed and construct $\theta_{p+q+1}$.
Let $\sigma$ be a $(p+q+1)$-simplex of $K$ regarded as an element of $C_{p+q+1}(K)$.
Using the inductive assumption of the equality (a), we have
\begin{align*}
\delta(\varphi(\sigma)-\theta_{p+q}(\delta\sigma))
=&
\varphi(\delta\sigma)-\delta\theta_{p+q}(\delta\sigma)
\\
=&
\varphi(\delta\sigma)+\theta_{p+q-1}(\delta\delta\sigma)-\varphi(\delta\sigma)
=0.
\end{align*}
By the inductive assumption of (b) and the assumption (1), 
we have
$\theta_{p+q}(\delta\sigma)\in L^{\sigma}_{q+1}$.
Since we have $\varphi(\sigma)\in L^{\sigma}$
by the assumption (2), 
$\varphi(\sigma)-\theta_{p+q}(\delta\sigma)$ is a closed element
in $L_{q+1}^{\sigma}$. By the assumption (3), there exists an element 
$t_{\sigma}\in L_{p+q+2}^{\sigma}$
such that $\delta t_{\sigma}=\varphi(\sigma)-\theta_{p+q}(\delta\sigma)$.
We define a morphism $\theta_{p+q+1}$ to be $\theta_{p+q+1}(\sigma)=t_{\sigma}$
and the map $\theta_{p+q+1}$ satisfies conditions (a) and (b) for $q+1$.
\end{proof}

\subsection{Independence of ordering}
\label{subsec orgering}
Let $K$ be a finite simplicial complex, $L$ a full subcomplexes of $K$,
and $\co$ be a good ordering with respect to $L$.
We set
$$
W=\underset{\si\cap L=\emptyset}\cup \si.
$$
Let $p$ be a positive even integer, and let $T$ be a $p$-cocycle in $C^p(K,W)$ and
$
\varphi:C_{r}(K;\QQ) \to C_{r-p}(K;\QQ)
$ 
be the map
defined by $\varphi(\alpha)=T\overset{\co}\cap\alpha$
(\ref{def:simplicial cap product}).
Then the map $\varphi$ is a homomorphism of complexes, and its
image is contained in
$C_{\bullet-p}(L;\QQ)$.
Thus we have a homomorphism of complexes:
$$
\varphi :
C_{\bullet}(K;\QQ) \to C_{\bullet-p}(L;\QQ).
$$
Let $T'$ be a $p$-cocycle in $C^p(K,W)$ and set
$\varphi'(\alpha)=T'\overset{\co}\cap \alpha$.
If $T-T'$ is the coboundary of $w\in C^{\bullet}(K,W)$,
i.e. $dw=T-T'$, 
then we have
$$
\delta(w\overset{\co}\cap \alpha)-w\overset{\co}\cap \delta\alpha=-(\varphi -\varphi')(\al)
$$
for each $\al\in C_\bullet(K)$.
Therefore the homomorphism of homologies
$[\varphi]:H_{p+q}(K;\QQ) \to H_{q}(L;\QQ)
$
induced by $\varphi$ depends only on the cohomology class $[T]$ of $T$.

Let $K^*$ be a subcomplex of $K$ and set $L^*=K^*\cap L$.
By restricting the homomorphism $\varphi$ to $C_\bullet(K^\ast)$, 
we have a homomorphism of subcomplexes
$
C_{p+q}(K^*;\QQ) \to C_{q}(L^*;\QQ)
$
and a homomorphism of relative homologies
\begin{equation}
\label{cap prod on homology}
[\varphi]:H_{p+q}(K,K^*;\QQ) \to H_{q}(L,L^*;\QQ).
\end{equation}
This homomorphism also depends only on the cohomology class $[T]$
of $T$ in $H^p(K,W)$.


\begin{proposition}
\label{app prop indep of good ordering}
The homomorphism 
(\ref{cap prod on homology})
is independent of the ordering $\co$.
\end{proposition}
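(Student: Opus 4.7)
The strategy is to apply the acyclic carrier theorem (Proposition \ref{existence restricted homotopy}) to produce a chain homotopy between the two chain maps $\varphi = T\overset{\co}{\cap}$ and $\varphi' = T\overset{\co'}{\cap}$ from $C_\bullet(K;\QQ)$ to $C_{\bullet-p}(L;\QQ)$, and then to check that this homotopy respects the subcomplex $K^*$ so that it descends to the relative complexes. Once we have such a $\theta$, the relation $\delta\theta + \theta\delta = \varphi - \varphi'$ forces the induced maps on $H_{p+q}(K,K^*;\QQ) \to H_q(L,L^*;\QQ)$ to agree.

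For each $\sigma \in K$ I would take as carrier $L^\sigma = C_\bullet(\sigma \cap L)$, which is zero when $\sigma \cap L = \emptyset$ and otherwise is the chain complex of a single simplex (this uses that $L$ is a full subcomplex of $K$, so that $\sigma \cap L$ is a face of $\sigma$). The inclusion $L^\tau \subset L^\sigma$ for $\tau \subset \sigma$ is immediate, and $L^\sigma$ is acyclic in positive degrees since a simplex is contractible. To verify condition (2) of Proposition \ref{existence restricted homotopy}, I would argue exactly as in the proof of Proposition \ref{basic property of face map 1} (2): writing $\sigma = [v_0,\dots,v_k]$ in the ordering $\co$, either $v_p \in L$ (and then by goodness of $\co$ the whole tail $[v_p,\dots,v_k]$ lies in $L \cap \sigma$, so $\varphi(\sigma) \in L^\sigma_{k-p}$), or $v_p \notin L$ (and then the front $[v_0,\dots,v_p] \subset W$, so $T$ vanishes on it and $\varphi(\sigma) = 0$). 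The identical dichotomy applies to $\co'$.

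The nontrivial part is condition (4), i.e.\ showing that for $\sigma$ a $p$-simplex the $0$-chain $(\varphi - \varphi')(\sigma) \in L^\sigma_0$ is a boundary in $C_\bullet(\sigma \cap L)$. If $\sigma \cap L = \emptyset$ both cap products vanish. Otherwise $\sigma \cap L$ is a nonempty simplex, hence connected, so it suffices to observe that $\varphi(\sigma)$ and $\varphi'(\sigma)$ have the same augmentation. Both equal the value of $T$ on the oriented simplex $\sigma$ (the alternating convention on cochains exactly cancels the sign arising from reordering the vertices), times a vertex of $\sigma \cap L$; two such $0$-chains of equal total weight on a connected simplex are homologous. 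The acyclic carrier theorem then yields the desired $\theta$ with $\theta(\sigma) \in C_\bullet(\sigma \cap L)$.

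Finally, since $\theta(\sigma) \subset \sigma \cap L$, whenever $\sigma \in K^*$ one has $\theta(\sigma) \in C_\bullet(\sigma \cap L^*) \subset C_\bullet(L^*)$, so $\theta$ passes to the relative complex and provides the required chain homotopy $C_\bullet(K,K^*;\QQ) \to C_{\bullet-p+1}(L,L^*;\QQ)$. The main obstacle I anticipate is the orientation/sign bookkeeping in condition (4): one has to be careful that the value $T(\sigma)$ is well defined on the oriented simplex and that the permutation signs introduced by rewriting $\sigma$ in the two different orderings are absorbed into the alternating convention on $C^p(K;\QQ)$, so that the augmentations of $\varphi(\sigma)$ and $\varphi'(\sigma)$ genuinely agree.
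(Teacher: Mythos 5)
Your proposal is correct and follows essentially the same route as the paper: the paper's proof is exactly an application of the acyclic carrier theorem (Proposition \ref{existence restricted homotopy}) to the difference $T\overset{\co}\cap{}-T\overset{\co'}\cap{}$ with carriers $C_\bullet(L\cap\si)$, using fullness of $L$ for acyclicity and the fact that both cap products equal $T(\sigma)$ times a vertex of $L\cap\si$ for condition (4) (Lemma \ref{existence homotopy changing ordering}). Your explicit check that the homotopy descends to the relative complexes is the same point the paper leaves implicit via condition (b) of the carrier theorem.
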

\begin{proof} The assertion follows from the following lemma.

\begin{lemma}
\label{existence homotopy changing ordering}
Let $T$ be a cocycle in $C^{p}(K,W)$ and
$\co$ and $\co'$ be good orderings of $K$ with respect to $L$.
Then there exists a map 
$\theta_{p+q}:C_{p+q}(K)\to C_{q+1}(L)$ $(q\geq 0)$ 
satisfying the following conditions:
\begin{enumerate}
\item
$
\delta\theta_{p+q}(x)+\theta_{p+q-1}(\delta x)=
T\overset{\co}\cap x-
T\overset{\co'}\cap x
$. for $q\geq 0$. Here we set $\theta_{p-1}=0$.
\item
$\theta_{p+q}(\sigma)\in C_{q+1}(L\cap \si)$ for each simplex $\sigma \in K_{p+q}$.
\end{enumerate}
\end{lemma}
\begin{proof}
We apply Proposition \ref{existence restricted homotopy} 
to $\varphi(x)=T\overset{\co}\cap x-
T\overset{\co'}\cap x$, $D=C_{\bullet}(L)$ and $L^{\sigma}_{\bullet}=C_{\bullet}(L\cap \si)$.
Conditions (1), (2) are easily verified.
Since the complex $L$ is a full subcomplex of $K$,
the intersection $L\cap \sigma$ is a face of $\sigma$,
and condition (3) is satisfied.
We check condition (4). Let 
$\sigma=[v_0,\dots, v_p]=\pm [v_0',\dots,v_p']$ be a $p$-simplex.
Here
$v_0<\cdots <v_p$ for the ordering $\co$ and
$v_0'<\cdots <v_p'$ for the ordering $\co'$.
Then we have
$
T\overset{\co}\cap \sigma=T(\sigma)[v_p]$ and
$T\overset{\co'}\cap \sigma=T(\sigma)[v_p']$
Since $[v_p]$ and $[v_p']$ are in the same homology
class in 
$H_{0}(L\cap \si)$, and (4) is proved.
Thus we have a map satisfying conditions (1) and (2)
in the lemma.
\end{proof}

\end{proof}
Since the homomorphism (\ref{cap prod on homology})
depends only on the choice of cohomology class $[T]$ of
$T$, it is written as $[T]\cap$.

\begin{proof}[Proof of Proposition \ref{prop face map first properties} (2)] Let $\gamma$ be an element in $AC_k(K,\bold D^n;\QQ)$.
By the admissibility condition for $\delta\gamma$, we have
$ L_1\cap |\delta\gamma| \subset (L_1\cap |\gamma|)^{(k-3)}\cup \bold D^n$,
where $( L_1\cap|\delta\gamma| )^{(k-3)}$ is the $(k-3)$-skeleton of 
$ L_1 \cap |\ga|$.
We denote the set $\bold D^n\cap |\ga|$ by $\bold D^n_\ga$ and the set $  H_{1,0}\cap \bold D^n\cap |\ga|$
by $\bold D^{n-1}_\ga$. We have a homomorphism
\begin{align}
\label{extract coefficients} 
T\overset{\co}\cap :H_k(|\gamma|,|\delta\gamma|\cup \bold D^n_\ga ;\QQ) \to &
H_{k-2}( L_1\cap |\gamma|,(L_1\cap |\delta\gamma|)\cup  \bold D^{n-1}_\ga ;\QQ) 
\\
\nonumber
\to & 
H_{k-2}(L_1\cap |\gamma|,(L_1\cap |\gamma|)^{(k-3)}\cup  \bold D^{n-1}_\ga ;\QQ) 
\\
\nonumber
\simeq & 
\underset{\substack{\tau \in L_1\cap |\gamma|,\,\, \tau\not\subset \bold D^{n-1} \\ \dim \tau=k-2}}
\bigoplus
 \QQ\tau.
\end{align}
The chain $T\overset{\co}{\cap} \ga$
is equal to the image of the homology class $[\gamma]$  of $\ga$
under the homomorphism 
(\ref{extract coefficients}).
The map (\ref{extract coefficients}) 
is independent of the choice of Thom cocycle $T$ and the ordering $\co$
by Proposition \ref{app prop indep of good ordering}.
\end{proof}

\subsection{Compatibility with the subdivision map}
\label{subsec subdivision}
Let $K$ be a simplicial complex, $L$ a full subcomplex of $K$,
and $K'$ a subdivision of $K$. 
The subdivision of $L$ induced by
$K'$ is denoted by $L'$.  We assume that $L'$ is a full subcomplex of $K'$. 
Then we have the following subdivision operators:
\begin{align*}
&\la:C_{\bullet}(K;\QQ)\to C_{\bullet}(K';\QQ), \\
&\la:C_{\bullet}(L;\QQ)\to C_{\bullet}(L';\QQ).
\end{align*}
Let $W'$ be the subcomplex of $K'$ defined as $\underset{\si'\in K',\,\si\cap L'=\emptyset}\bigcup \si'$
and let $T$ be a closed element in $C^{p}(K', W')$,
i.e. $T(\sigma')=0$ if $\si'\cap L'=\emptyset$.
Then the pull back $\lambda^*T$ is contained in $C^{p}(K, W)$ where $W= \underset{\si\in K,\, \si\cap L=\emptyset}\bigcup \si$ .
We choose a good ordering $\cal O$ resp. $\cal O'$ of $K$ resp. $K'$ with respect to $L$ resp. $L'$.
Then we have the following (generally non-commutative) diagram.
\begin{equation}
\label{chain level diagram subdivision}
\begin{matrix}
C_{\bullet}(K;\QQ) &\xrightarrow{\lambda^*T\overset{\co}\cap}& C_{\bullet -p}(L;\QQ) \\
\text{\scriptsize{$\la$}}\downarrow\phantom{\text{\scriptsize{$\la$}}}
 & & 
\phantom{\text{\scriptsize{$\la$}}}\downarrow\text{\scriptsize{$\la$}}
\\
C_{\bullet}(K';\QQ) &\xrightarrow{T\overset{\cal O'}\cap}& C_{\bullet -p}(L';\QQ).
\end{matrix}
\end{equation}
For a subcomplex $K^*$ of $K$, we set $L^*=K^*\cap L$. The subdivisions of $K^*$ and $L^*$
induced by $K'$ are denoted by ${K^{*}}'$ and ${L^{*}}'$, respectively.
The diagram (\ref{chain level diagram subdivision})
induces the following diagram for relative homologies
\begin{equation}
\label{diagram for subdivision relative cohomology}
\begin{matrix}
H_{p+q}(K,K^*;\QQ) &\xrightarrow{[\lambda^*T]\cap}& H_{q}(L,L^*;\QQ) \\
\text{\scriptsize{$\la$}}\downarrow\phantom{\text{\scriptsize{$\la$}}}
 & & 
\phantom{\text{\scriptsize{$\la$}}}\downarrow\text{\scriptsize{$\la$}}
\\
H_{p+q}(K',{K^*}';\QQ) &\xrightarrow{[T]\cap}& H_{q}(L',{L^*}';\QQ).
\end{matrix}
\end{equation}
\begin{proposition}
\label{compat with subdivision}
The diagram (\ref{diagram for subdivision relative cohomology})
is commutative.
\end{proposition}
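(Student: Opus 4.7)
The plan is to show that the two chain maps
$$\varphi_1 := \lambda\circ(\lambda^*T\overset{\co}{\cap}-) \quad\text{and}\quad \varphi_2 := (T\overset{\co'}{\cap}-)\circ\lambda$$
from $C_\bullet(K;\QQ)$ to $C_{\bullet-p}(L';\QQ)$ are chain homotopic through a homotopy $\theta$ carried by the subcomplexes $L'\cap\si\subset L'$. Since $p$ is even, both $\varphi_1$ and $\varphi_2$ commute with the boundary $\delta$ by (\ref{boundary formula}), so their difference $\varphi=\varphi_1-\varphi_2$ is a chain map of degree $-p$. The existence of $\theta$ is obtained from Proposition \ref{existence restricted homotopy} applied to $\varphi$ with the family of carriers $L^\si_\bullet := C_\bullet(L'\cap\si;\QQ)$.

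I would verify the four hypotheses of the acyclic carrier theorem as follows. Condition (1) is immediate since $L'\cap\tau\subset L'\cap\si$ when $\tau\subset\si$. For condition (2), note that $\lambda^*T\overset{\co}{\cap}\si\in C_\bullet(L\cap\si;\QQ)$ by Proposition \ref{basic property of face map 1} (2) (since $\co$ is good with respect to $L$), hence its subdivision lies in $C_\bullet(L'\cap\si;\QQ)$; likewise $T\overset{\co'}{\cap}\lambda(\si)$ is supported in $\lambda(\si)\cap L'=L'\cap\si$ by the same proposition applied to $K'$. Condition (3) is the key geometric input: since $L$ is a full subcomplex of $K$, the intersection $L\cap\si$ is either empty or a face of $\si$, hence a (closed) simplex, and its subdivision $L'\cap\si$ is therefore a PL ball, which is acyclic in positive degrees.

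The heart of the argument is condition (4). Let $\si=[v_0,\ldots,v_p]$ with $v_0<\cdots<v_p$ under $\co$. If $v_p\notin L$, then by the good-ordering property no vertex of $\si$ lies in $L$, so $\si\subset W$ and $L\cap\si=\emptyset$, which forces $\lambda^*T\overset{\co}{\cap}\si=0$ and, by the same argument applied vertex by vertex inside $\lambda(\si)$, also $T\overset{\co'}{\cap}\lambda(\si)=0$. If $v_p\in L$, then $L'\cap\si$ is connected, so $H_0(L^\si_\bullet)=\QQ$ via the augmentation. The augmentation of $\varphi_1(\si)$ equals $(\lambda^*T)(\si)=T(\lambda(\si))$ by definition of the pullback cochain, while the augmentation of $\varphi_2(\si)$ equals $\sum_{\si'\in\lambda(\si)}\pm T(\si')=T(\lambda(\si))$ by linearity (each simplex in $\lambda(\si)$ contributes either $T(\si')$ times its top vertex, lying in $L'$, or zero because $\si'\subset W'$). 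Hence the two augmentations coincide, so $[\varphi(\si)]=0$ in $H_0(L^\si)$. This identity is the main obstacle, and it essentially encodes the naturality of the cap product under subdivision.

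Once $\theta_{p+q}:C_{p+q}(K;\QQ)\to C_{q+1}(L';\QQ)$ satisfying $\delta\theta+\theta\delta=\varphi_1-\varphi_2$ is constructed, the carrier condition $\theta(\si)\in C_\bullet(L'\cap\si;\QQ)$ implies that if $\si\subset K^*$ then $\theta(\si)\in C_\bullet({L^*}';\QQ)$. Thus $\theta$ descends to a chain homotopy on relative chains $C_{\bullet}(K,K^*;\QQ)\to C_{\bullet-p+1}(L',{L^*}';\QQ)$, and passing to homology yields commutativity of (\ref{diagram for subdivision relative cohomology}).
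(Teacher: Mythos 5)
Your proposal is correct and follows essentially the same route as the paper: apply the acyclic carrier theorem (Proposition \ref{existence restricted homotopy}) to the difference $\la\circ(\la^*T\overset{\co}{\cap}-)-(T\overset{\co'}{\cap}-)\circ\la$ with carriers $C_\bullet(L'\cap\si;\QQ)$, the key point being that the zero-cycles $T(\la\si)[v_p]$ and $\sum_j T(\si_j)[w_p^j]$ agree in $H_0(L'\cap\si)$, which is exactly the paper's Lemma \ref{lem compat subdiv}. Your case split on whether $v_p\in L$ and the augmentation comparison are just a slightly more explicit version of the paper's verification of condition (4).
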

\begin{proof} The assertion follows from the following lemma.
\begin{lemma}
\label{lem compat subdiv}
Consider the following  two homomorphism of complexes
$$
\la\circ(\lambda^*T\overset{\co}\cap\ ) \quad \text{ and } \quad
(T\overset{\cal O'}\cap\ )\circ\la
:
C_{\bullet}(K) \xrightarrow{} C_{\bullet -p}(L')
$$
Then there exist maps $\theta_{p+q}:C_{p+q}(K) \to C_{q+1}(L')$
such that
\begin{enumerate}
\item
$
\delta\theta_{p+q}(x)+\theta_{p+q-1}(\delta x)=
\la(\lambda^*T\overset{\co}\cap\ x)-
T\overset{\cal O'}\cap \la(x)
$, $(q\geq 0$, $\theta_{p-1}=0)$ and
\item
$\theta_{p+q}(\sigma)\in C_{q+1}(L'\cap \si)$ for each simplex $\sigma \in K_{p+q}$.
\end{enumerate}
\end{lemma}
\begin{proof}
We apply Proposition \ref{existence restricted homotopy}
to the case where  $\varphi(x)=
\la(\lambda^*T\overset{\co}\cap\ x)-
T\overset{\cal O'}\cap \la(x)$, $D_\bullet=C_\bullet(L')$  and $L^{\sigma}_{\bullet}=
C_{\bullet}(L'\cap \si)$.
Conditions (1) and (2)  are easily verified. Since   $L$ is a full subcomplex of $K$,
for each $\si\in K$, the intersection $L\cap \sigma$ is a face of $\sigma$. The complex $L'\cap \si$
is a subdivision of $L\cap \si$, and so the condition (3) is satisfied.
We claim that the condition (4) of 
Proposition \ref{existence restricted homotopy}
holds for $\varphi$.
Let $\si=[v_0, \dots, v_p]\in K_p$ and set $\la\sigma=\sum_j\sigma_j
=\sum_j\pm [w_0^j, \dots, w_p^j]$. Here we assume that $v_0<\cdots <v_p$ resp. $w_0^j<\cdots <w_p^j$
under $\cal O$ resp. $\cal O'$ for each $j$. 
Then we have
$$
\la(\lambda^*T\overset{\co}\cap \sigma)=\sum_jT(\sigma_j)[v_p]
$$
and
$$
T\overset{\cal O'}\cap(\la\sigma)=\sum_jT(\sigma_j)[w^j_p].
$$
Since $ L\cap \si $ is
a simplex of $\sigma$, and $[v_p]$ and $[w^j_p]$
define the same homology class in $H_0(L^{\sigma}_{\bullet})$.
Thus condition (4) is satisfied.
\end{proof}

\end{proof}

\begin{proof}[Proof of Proposition \ref{prop face map first properties} (3)]  Let $\gamma$ be an element in 
$AC_{k}(K,\bold D^n;\QQ)$.
The homology class of $\ga$ in 
$H_k(|\gamma|,|\delta\gamma|;\QQ)$.
 is denoted by $[\gamma]$.
We set 
$|\gamma|'=K'\cap |\gamma|$ and
$|\delta\gamma|'=K'\cap |\delta\gamma|$.
The element $\lambda(\gamma)$ defines a class $[\lambda(\gamma)]$ in
$H_k(|\gamma|',|\delta\gamma|';\QQ)$. 
Let $T'\in C^2(K',W')$ be a Thom cocycle of $  H_{1,0}$.  Then $\la^*T'\in C^2(K,W)$ is also a Thom cocycle
of $  H_{1,0}$.  As in the proof of Proposition\ref{prop face map first properties} (2), for a chain $\ga\in AC_k(K,\bold D^n;\QQ)$, 
the chain $\la^*T\overset{\cal O}\cap \ga $ resp. $T\overset{\cal O'}\cap \la(\ga)$ is determined
by its class 
$$\la^*T\cap[\ga] \in H_{k-2}(L_1\cap |\gamma|,(L_1\cap |\gamma|)^{(k-3)}\cup  \bold D^{n-1}_\ga;\QQ)$$
resp. 
$$T\cap[\la(\ga)] \in H_{k-2}(L_1'\cap |\gamma|',(L'_1\cap |\gamma|')^{(k-3)}\cup  \bold D^{n-1}_\ga;\QQ)$$
Since these classes are independent of the orderings by Proposition \ref{prop face map first properties} (2), we forget them.  By
Proposition \ref{compat with subdivision}
applied to $K=|\ga|$, $K^*=|\delta\ga|\cup \bold D^n_\ga$ and $L=L_1$, 
we have the following commutative diagram.
\begin{equation}
\label{commutativity cohomologcal subdivision}
\begin{matrix}
H_k(|\gamma|,|\delta\gamma|\cup \bold D^n_\ga;\QQ) &\xrightarrow{\la^*T'\cap } &
H_{k-2}(L_1\cap |\gamma|,(L_1\cap |\gamma|)^{(k-3)}\cup  \bold D^{n-1}_\ga; \QQ) \\
\lambda \downarrow \phantom{\lambda}& & \phantom{\lambda}\downarrow\lambda \\
H_k(|\gamma|',|\delta\gamma|'\cup \bold D^n_\ga;\QQ) &\xrightarrow{T'\cap } &
H_{k-2}(L'_1\cap |\gamma|',(L'_1\cap |\gamma|')^{(k-3)}\cup  \bold D^{n-1}_\ga; \QQ)
\\
\end{matrix}
\end{equation}
The assertion follows from this.
\end{proof}

\subsection{Relations between the cap product and the cup product}
\label{subsec cup}
Let $K$ be a finite simplicial complex and
$L_1,L_2$ be subcomplexes in $K$.
Assume that $L_1, L_2$ and $L_1\cup L_2$ are full subcomplexes of $K$.
Let $\cal O$ be a good ordering with respect to $L_1$ and
$L_{12}=L_1\cap L_2$. We set
$$
W_i=\underset{\si\cap L_i=\emptyset}\cup \si.
$$
\begin{lemma}
\label{lem union}
Under  the above assumtions, we have  $\dis W_1\cup W_2=
\underset{\si\cap L_{12}=\emptyset}\cup \si$.
\end{lemma}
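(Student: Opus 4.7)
The plan is to prove the two inclusions separately and deduce the equality.

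The inclusion $W_1 \cup W_2 \subseteq \bigcup_{\sigma \cap L_{12} = \emptyset}\sigma$ is immediate: since $L_{12} \subseteq L_i$ for $i=1,2$, any simplex $\sigma$ with $\sigma \cap L_i = \emptyset$ automatically satisfies $\sigma \cap L_{12} = \emptyset$. Hence $W_i \subseteq \bigcup_{\sigma \cap L_{12} = \emptyset}\sigma$ and the inclusion follows.

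For the reverse inclusion I would argue by contradiction. Take a simplex $\sigma \in K$ with $\sigma \cap L_{12} = \emptyset$ and suppose, for contradiction, that $\sigma$ meets both $L_1$ and $L_2$. Because $L_1$ is a full subcomplex of $K$, the intersection $\sigma \cap L_1$ is a nonempty face of $\sigma$, and I can pick a vertex $v_1$ in it; similarly choose a vertex $v_2 \in \sigma \cap L_2$. If $v_1 = v_2$, that common vertex lies in $L_1 \cap L_2 = L_{12}$, contradicting $\sigma \cap L_{12} = \emptyset$. Otherwise the edge $[v_1,v_2]$ is a $1$-face of $\sigma$, hence a simplex of $K$, and both of its vertices lie in $|L_1 \cup L_2|$.

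At this point the key step is to invoke fullness of $L_1 \cup L_2$: this promotes the pair $\{v_1, v_2\}$ of vertices in $|L_1 \cup L_2|$ to an actual simplex of the subcomplex $L_1 \cup L_2$. By the very definition of the union of simplicial complexes, $[v_1,v_2]$ then belongs to $L_1$ or to $L_2$. In the first case both of its vertices lie in $L_1$, so $v_2 \in L_1 \cap L_2 = L_{12}$; in the second case $v_1 \in L_{12}$. Either possibility contradicts $\sigma \cap L_{12} = \emptyset$, so one of $\sigma \cap L_1$, $\sigma \cap L_2$ must be empty, i.e.\ $\sigma \subseteq W_1 \cup W_2$, which gives the remaining inclusion.

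The only subtle point, and where I expect the small care to lie, is this use of fullness of $L_1 \cup L_2$ to turn ``two vertices lying in the underlying space'' into ``an honest $1$-simplex of the complex''; the hypotheses of fullness on $L_1$ and $L_2$ individually are only used to guarantee that $\sigma \cap L_i$ contains a vertex when nonempty, which is a minor clarification rather than a real obstacle.
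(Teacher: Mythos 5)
Your proof is correct and rests on the same key point as the paper's: the fullness of $L_1\cup L_2$, which forces a simplex of $K$ with all vertices in $L_1\cup L_2$ to belong to $L_1$ or to $L_2$ and hence to produce a point of $\sigma\cap L_{12}$. The paper argues directly---$\sigma\cap(L_1\cup L_2)$ is a single face $\tau$ of $\sigma$, which lies in $L_1$ or $L_2$, so $\sigma$ misses the other $L_i$---whereas you argue by contradiction through the edge $[v_1,v_2]$; this is only a cosmetic difference.
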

\begin{proof}
We will show that $W_1\cup W_2\supset \underset{\si\cap L_{12}=\emptyset}\cup \si$. Suppose that
$\si\cap L_{12}=\emptyset$. 
Since $L_1\cup L_2$ is a full subcomplex of $K$, $\si\cap (L_1\cup L_2)$ is a face of $\si$ which we denote by $\tau$.
If $\tau \in L_1$, then $\tau\cap L_2=\emptyset.$
\end{proof}

\begin{definition}[Cup product]
\label{cap and cup product}
For $T_1\in C^{p}(K,W_1)$ and
$T_2\in C^{q}(K,W_2)$, we define the cup product $T_1\overset{\cal O}\cup T_2\in C^{p+q}(K)$
by
$$
(T_1\overset{\co}\cup T_2)(\sigma)=T_1(v_0, \dots, v_p)T_2(v_p,\dots, v_{p+q})
$$
where $\sigma=[v_0, \dots, v_{p+q}]$
with $v_0<\cdots <v_{p+q}$. The cup product induces a homomorphism
of complexes:
$$
\overset{\co}\cup :C^{\bullet}(K,W_1)\otimes C^{\bullet}(K,W_2)\to C^{\bullet}(K).
$$
\end{definition}
Let $K^*$ be a subcomplex of $K$ and set $L_i^*=K^*\cap L_i$ and $L_{12}^*=K^*\cap L_{12}$.
The proof of the following proposition is obvious and is omitted. 
\begin{proposition}
\label{cup and cap projection formula for simplicial}
\begin{enumerate}
\item 
The restriction of the cup product $T_1\cup T_2$ to $W_1\cup W_2$ vanishes.
\item
Let $T_1$ and $T_2$ be closed elements in $C^p(K,W_1)$ and 
$C^q(K,W_2)$
and set $T_{12}=T_1\overset{\co}\cup T_2$. Then the composite of the homomorphisms
\begin{align*}
T_2\overset{\cal O}\cap (T_1\overset{\cal O}\cap *):\,C_{p+q+r}(K,K^*;\QQ)&\xrightarrow{T_1\overset{\co}{\cap}} 
C_{q+r}(L_{1}, L_{1}^*;\QQ) \\
&\xrightarrow{T_2\overset{\co}{\cap}} 
C_r(L_{12}, L_{12}^*;
\QQ)
\end{align*}
is equal to the homomorphism $T_{12}\overset{\co}\cap$.
\end{enumerate}
\end{proposition}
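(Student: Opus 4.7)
The plan is to reduce both parts to a direct verification on individual simplices via the explicit formulas for the simplicial cup and cap products, together with the goodness hypotheses on $\co$; beyond Lemma \ref{lem union} no additional machinery should be needed, and in particular no acyclic-carrier argument.

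For Part (1), I would fix a simplex $\sigma = [v_0, \dots, v_{p+q}]$ of $K$ with $v_0 < \cdots < v_{p+q}$ in $\co$, and suppose $\sigma \subset W_1 \cup W_2$; by Lemma \ref{lem union} this is equivalent to $\sigma \cap L_{12} = \emptyset$. The goal is to show that
\[
(T_1 \overset{\co}{\cup} T_2)(\sigma) = T_1([v_0, \dots, v_p])\, T_2([v_p, \dots, v_{p+q}])
\]
vanishes. If the front face $[v_0, \dots, v_p]$ does not meet $L_1$, it lies in $W_1$ and $T_1$ is zero on it. Otherwise, the good ordering with respect to $L_1$ forces all sufficiently large vertices of $\sigma$ into $L_1$; in particular $[v_p, \dots, v_{p+q}] \subset L_1$. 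Combined with $\sigma \cap L_{12} = \emptyset$, this forces $[v_p, \dots, v_{p+q}] \cap L_2 = \emptyset$, so the back face lies in $W_2$ and $T_2$ vanishes on it.

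For Part (2), linearity reduces the problem to evaluating both homomorphisms on a single simplex $\sigma = [v_0, \dots, v_{p+q+r}]$ with $v_0 < \cdots < v_{p+q+r}$. Applying the cap product formula (\ref{def:simplicial cap product}) once yields $T_1 \overset{\co}{\cap} \sigma = T_1([v_0, \dots, v_p])\, [v_p, \dots, v_{p+q+r}]$, a chain which by Proposition \ref{basic property of face map 1} (2) lies in $C_{q+r}(L_1;\QQ)$ whenever the coefficient is nonzero. A second application of the same formula, together with the definition of the cup product, then gives
\[
T_2 \overset{\co}{\cap}(T_1 \overset{\co}{\cap} \sigma) \;=\; T_1([v_0, \dots, v_p])\, T_2([v_p, \dots, v_{p+q}])\, [v_{p+q}, \dots, v_{p+q+r}] \;=\; T_{12} \overset{\co}{\cap} \sigma,
\]
which is the desired identity.

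The only subtle point to monitor, and the one I expect to be the main (mild) obstacle, is that $\co$ is assumed good with respect to $L_1$ and $L_{12}$ but not a priori with respect to $L_2$. This asymmetry is precisely why the argument for Part (1) must first use goodness with respect to $L_1$ to pull the back face into $L_1$, and only then invoke the hypothesis $\sigma \cap L_{12} = \emptyset$; symmetrically, goodness with respect to $L_{12}$ is what guarantees, via Proposition \ref{basic property of face map 1} (2), that the map $T_{12} \overset{\co}{\cap}$ on the right-hand side of Part (2) actually lands in the claimed target $C_r(L_{12}, L_{12}^*;\QQ)$. With this asymmetric use of the goodness hypothesis in place, the proposition reduces to a straightforward unwinding of definitions.
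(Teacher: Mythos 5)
Your proposal is correct, and for part (2) it is essentially the paper's own argument: a direct evaluation of both sides on an ordered simplex using the defining formulas of $\overset{\co}\cup$ and $\overset{\co}\cap$, with goodness of $\co$ with respect to $L_1$ and $L_{12}$ (plus fullness) guaranteeing that the intermediate and final maps land in $C_{\bullet}(L_1)$ and $C_{\bullet}(L_{12})$ as claimed. The only, harmless, difference is in part (1), where the paper uses the simpler observation that a simplex contained in $W_1\cup W_2$ lies entirely in $W_1$ or entirely in $W_2$, so one of $T_1(\text{front face})$, $T_2(\text{back face})$ vanishes immediately, with no appeal to the ordering or to Lemma \ref{lem union}.
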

As a consequence the composite of the following morphisms coincides with
the cap product with $[T_{12}]$.
\begin{align*}
H_{p+q+r}(K,K^*;\QQ) 
&\xrightarrow{[T_1]\cap} H_{q+r}(L_1,L_1^*;\QQ)
\\
&\xrightarrow{[T_2]\cap} H_{r}(L_{12},L_{12}^*;\QQ)
\end{align*}

\begin{proof} [Proof of Proposition \ref{total face map is a differential}]
Let $K$ be a good triangulation of $P^n$, and let $\ga$ be an element of $AC_{p}(K, \bold D^n; \QQ)$.
We set $H_1=  H_{1,0}$, $H_2=H_{2,0}$, 
$H_{12}=H_1\cap H_2$, $L_1=K\cap  H_{1,0}$,
$L_2=K\cap H_2$
$L_{12}=K\cap H_{12}$, $W_i=\underset{\si\in K,\,\si\cap H_i=\emptyset}\cup \si$ ($i=1,2$) and
$ W_{12}=\underset{\si\in K,\,\si\cap H_{12}=\emptyset}\cup \si$. By Lemma \ref{lem union}
we have $W_{12}=W_1\cup W_2$. 
The face map $\partial_{i,0}$ $(i=1,2)$ is denoted by $\partial_i$.
Considering the symmetry on $H_{i,\alpha}$ $(1\leq i\leq n, \alpha=0,\infty)$,
it is enough to prove the commutativity of the following diagram
\begin{equation}
\label{face map is a cubic differential commutativity}
\begin{matrix}
AC_{p}(K,\bold D^n;\QQ) 
& \xrightarrow{\partial_1} &  
AC_{p-2}(L_1,\bold D^{n-1};\QQ) 
\\
\partial_2\downarrow \phantom{\partial_2}& & 
\phantom{\partial_2}\downarrow \partial_2
\\
AC_{p-2}(L_2,\bold D^{n-1};\QQ) 
& \xrightarrow{\partial_1} &  
AC_{p-4}(L_{12},\bold D^{n-2};\QQ). 
\end{matrix}
\qquad 
\qquad 
\end{equation}
We denote the complex $\bold D^n\cap |\ga|$ by $\bold D^n_\ga$. Let $T_1
\in C^2(K,W_1;\QQ)$ resp. $T_2\in C^2(K,W_2;\QQ)$ be a Thom cocycle
of the face $H_1$ resp. $H_2$. We have the equality 
$$[T_1]\cup [T_2]=[T_2]\cup [T_1]$$
in $H^4(K, W_{12};\QQ)$.
By Proposition \ref{cup and cap projection formula for simplicial}
applied to $K=|\ga|$ and $K^*=|\delta \ga|  \cup \bold D^n_\ga$, we have a commutative diagram
\[
\begin{matrix}
H_p(|\gamma|,|\delta\gamma| \cup \bold D^n_\ga ;\QQ)
& \xrightarrow{[T_1]\cap} &  
H_{p-2}(L_1\cap |\gamma|,L_1\cap (|\delta\gamma| \cup \bold D^n_\ga);\QQ) 
\\
[T_2]\cap\downarrow \phantom{[T_2]\cap}& & 
\phantom{[T_1]\cap}\downarrow [T_2]\cap
\\
H_{p-2}(L_2\cap |\gamma|,L_2\cap (|\delta\gamma|\cup \bold D^n_\ga) ;\QQ) 
& \xrightarrow{[T_1]\cap} &  
H_{p-4}(L_{12}\cap |\gamma|,L_{12}\cap (|\delta\gamma|\cup \bold D^n_\ga) ;\QQ) 
\end{matrix}
\]


The chain $\ga$ defines a class $[\ga]$ in $H_p(|\gamma|,|\delta\gamma| \cup \bold D^n_\ga ;\QQ)$.
By the admissibility condition,
the complex $L_{12}\cap ( |\delta\gamma| \cup \bold D^n_\ga)$
is contained in 
 $(L_{12}\cap |\delta\gamma| )^{(p-5)}\cup (L_{12}\cap \bold D^n_\ga)$
where $(L_{12}\cap |\delta\gamma| )^{(p-5)}$ denotes  the $(p-5)$-skeleton 
of $ L_{12}\cap |\delta\gamma|$.  The chain  $T_2\cap(T_1\cap \ga)$ resp.  $T_1\cap(T_2\cap \ga)$ is determined by its
class $[T_2]\cap([T_1]\cap [\ga])$ resp.  $[T_1]\cap([T_2]\cap [\ga])$ in $H_{p-4}(L_{12}\cap |\gamma|,  (L_{12}\cap |\delta\gamma|)^{(p-5)}\cup (L_{12}\cap \bold D^n_\ga);\QQ)$.
Hence we have the equality 
$\partial_2\partial_1(\ga)=\partial_1\partial_2(\ga)$.
\end{proof}

\vskip 0.5cm

\noindent Masaki Hanamura

\vskip 0.1cm 

\noindent Department of Mathematics,

\noindent Tohoku University,

\noindent 6-3, Aramaki Aza-Aoba, 

\noindent Aoba-ku, Sendai 980-8578,

\noindent Japan.

\vskip 0.5cm

\noindent Kenichiro Kimura

\vskip 0.1cm 

\noindent Department of Mathematics,

\noindent University of Tsukuba,

\noindent 1-1-1 Tennodai,

\noindent Tsukuba, Ibaraki,

\noindent 305-8571

\noindent Japan.

\noindent kimurak@math.tsukuba.ac.jp

\vskip 0.5cm

\noindent Tomohide Terasoma

\vskip 0.1cm 

\noindent Faculty of Engineering and Sciences,

\noindent Hosei University,

\noindent 3-7-2, Kajinocho, Koganeishi,

\noindent Tokyo, 184-8584, Japan.

\printindex

\end{document}